\providecommand{\U}[1]{\protect\rule{.1in}{.1in}}
\newtheorem{theorem}{Theorem}[section]
\newtheorem{corollary}[theorem]{Corollary}
\newtheorem{definition}{Definition}[section]
\newtheorem{lemma}{Lemma}[section]
\newtheorem{proposition}{Proposition}[section]
\newtheorem{remark}{Remark}[section]
\newenvironment{proof}[1][Proof]{\noindent\textbf{#1.} }{\ \rule{0.5em}{0.5em}}
\numberwithin{equation}{section}
\newdimen\dummy
\begin{document}
\title{Asymptotic stability of a nonlinear Korteweg-de Vries equation with a critical length}
\author{Jixun Chu\thanks{Department of Applied Mathematics, School of Mathematics and Physics, University of Science and Technology Beijing,
Beijing 100083, China, Universit\'{e} Pierre et Marie Curie-Paris 6,
UMR 7598 Laboratoire Jacques-Louis Lions, 75005 Paris, France.
E-mail: \texttt{chujixun@mail.bnu.edu.cn}. JXC was supported by the
ERC advanced grant 266907 (CPDENL) of the 7th Research Framework
Programme (FP7).}, Jean-Michel Coron\thanks{Institut universitaire
de France and Universit\'{e} Pierre et Marie Curie-Paris 6, UMR 7598
Laboratoire Jacques-Louis Lions, 75005 Paris, France. E-mail:
\texttt{coron@ann.jussieu.fr}. JMC was supported by the ERC advanced
grant 266907 (CPDENL) of the 7th Research Framework Programme
(FP7).}, Peipei Shang\thanks{Department of Mathematics, Tongji
University, Shanghai 200092, China. E-mail:
\texttt{peipeishang@hotmail.com}. PS was partially supported by the
ERC advanced grant 266907 (CPDENL) of the 7th Research Framework
Programme (FP7).}} \maketitle

\begin{abstract}
We study an initial-boundary-value problem of a nonlinear Korteweg-de Vries
equation posed on a finite interval $(0,2\pi).$ The whole system has Dirichlet
boundary condition at the left end-point, and both of Dirichlet and Neumann
homogeneous boundary conditions at the right end-point. It is known that the
origin is not asymptotically stable for the linearized system around the
origin. We prove that the origin is (locally) asymptotically stable for the
nonlinear system.

\vspace{0.2in}\noindent\textbf{Key words:} nonlinearity, Korteweg-de Vries
equation, stability, center manifold

\vspace{0.1in}\noindent\textbf{2000 MR Subject Classification:}\quad35Q53, 35B35

\end{abstract}

\section{Introduction}

\label{sec1}

\bigskip This article is concerned with the following initial-boundary-value
problem of the Korteweg-de Vries (KdV) equation posed on a finite interval
\begin{equation}
\left\{
\begin{array}
[c]{l}%
y_{t}+y_{x}+yy_{x}+y_{xxx}=0,\\
y(t,0)=y(t,L)=0,\\
y_{x}(t,L)=0,\\
y(0,x)=y_{0}\in L^{2}\left(  0,L\right)  ,
\end{array}
\right.  \label{0}%
\end{equation}
with $L=2\pi.$

The KdV equation was first derived by Boussinesq in
\cite{1877-Boussinesq} (see, in particular, equation (283 bis), p.
360) and Korteweg and de Vries in \cite{deJager} in order to
describe the propagation of small amplitude long water waves in a
uniform channel. This equation is now commonly used to model
unidirectional propagation of small amplitude long waves in
nonlinear dispersive systems.

Since in many physical applications the region is finite, people are
also interested in properties of the KdV equations on a finite
spacial domain. Moreover, Bona and Winther pointed out in
\cite{Bona-Winther} that the term $y_{x}$ should be incorporated in
the KdV equations to model the water waves when $x$ denotes the
spatial coordinate in a fixed frame. We refer to
\cite{Bona-Sun-Zhang, Bona-Sun-Zhang2009, Colin-Ghidaglia,
 Faminskii, Goubet-Shen, Homler, Kramer-Zhang, Rivas-Usman-Zhang} for
the well-posedness results of initial-boundary-value problems of the
KdV equations posed on a finite interval. From control theory point
of view, we refer to \cite{Cerpa-2012, Rosier-Zhang2009} for an
overall review and recent progress on different kinds of KdV
equations. In particular, when the spacial domain is of finite
interval, we refer to \cite{Cerpa2007, Coron-Crepeau, Crepeau,
Glass-Guerrero, Rosier1997, Rosier, Zhang} for the controllability
and \cite{Cerpa-Coron-preprint, Jia-Zhang2012,
Massarol-Menzala-Pazoto, Pazoto, Menzala-Vasconcellos-Zuazua} for
some stabilization results. We refer to
\cite{Cerpa-Crepeau-Stabilization, Komornik2, Komornik3,
Laurent-Rosier-Zhang, Russell-Zhang3, Russell-Zhang, Sun} for
studies on the KdV equations with periodic boundary conditions.

Rosier introduced in \cite{Rosier1997} the following set of critical lengths
\[
\mathcal{N}:=\left\{  2\pi\sqrt{\frac{j^{2}+l^{2}+jl}{3}};j,l\in
\mathbb{N}^{\ast}\right\}
\]
for the following KdV control system%
\begin{equation}
\left\{
\begin{array}
[c]{l}%
y_{t}+y_{x}+yy_{x}+y_{xxx}=0,\\
y(t,0)=y(t,L)=0,\\
y_{x}(t,L)=u(t),\\
y(0,x)=y_{0},
\end{array}
\right.  \label{KDV control}%
\end{equation}
where $u(t)\in\mathbb{R}$ is the control. We refer to
\cite{Cerpa-Crepeau, Coron-Crepeau, Rosier1997} for the
well-posedness and controllability of system \eqref{KDV control}.
Especially, Rosier proved in \cite{Rosier1997} that (\ref{KDV
control}) is locally controllable around the origin by analyzing the
corresponding linearized system and by means of Banach fixed point
theorem, provided that the spacial domain is not critical, i.e.
$L\notin\mathcal{N}$. However, this method does not work when
$L\in\mathcal{N}$, since the corresponding linearized system of
(\ref{KDV control}) around the origin is not any more controllable
in this case. By using the ``power series expansion'' method, Coron
and Cr\'{e}peau in \cite{Coron-Crepeau} obtained the local exact
controllability around the origin of the nonlinear KdV equation
\eqref{KDV control} with the critical length $L=2k\pi$ (i.e. taking
$j=l=k$ in $\mathcal{N}$), provided that (see \cite[Theorem 8.1 and
Remark 8.2]{CoronBook})
\begin{equation}
\left(  j^{2}+l^{2}+ jl=3k^{2}\text{ and }(j,l)\in\mathbb{N}\setminus
\{0\}^{2}\right)  \Rightarrow\left(  j=l=k\right)  .
\end{equation}
The cases with the other critical lengths have been studied by Cerpa in
\cite{Cerpa2007} and by Cerpa and Cr\'{e}peau in \cite{Cerpa-Crepeau} with the
same method, where the authors have proved that the nonlinear term $yy_{x}$
gives the local exact controllability around the origin.

If $L\notin\mathcal{N}$, it is proved by Perla Menzala, Vasconcellos
and Zuazua in \cite{Menzala-Vasconcellos-Zuazua} that $0$ is
exponentially stable for the linearized equation (\ref{linearized})
\begin{equation}
\left\{
\begin{array}
[c]{l}%
y_{t}+y_{x}+y_{xxx}=0,\\
y(t,0)=y(t,L)=0,\\
y_{x}(t,L)=0,\\
y(0,x)=y_{0}\in L^{2}\left(  0,L\right) ,
\end{array}
\right.  \label{linearized}%
\end{equation}
of (\ref{0}) around 0. Furthermore, it is also proved in
\cite{Menzala-Vasconcellos-Zuazua} that 0 is locally asymptotically
stable for system (\ref{0}). However, when $L\in\mathcal{N}$, it has
been proved by Rosier in \cite{Rosier1997} that (\ref{linearized})
admits a family of non-trivial solutions of the form $e^{\lambda
t}v_{\lambda}(x)$ for some $\lambda\in i\mathbb{R},$ where
$v_{\lambda}\in C^{\infty}([0,L])\setminus\{0\}$ satisfies
\[
\left\{
\begin{array}
[c]{c}%
\lambda v_{\lambda}(x)+v_{\lambda}^{\prime}(x)+v_{\lambda}^{\prime\prime
\prime}(x)=0,\\
v_{\lambda}(0)=v_{\lambda}(L)=v_{\lambda}^{\prime}(0)=v_{\lambda}^{\prime
}(L)=0.
\end{array}
\right.
\]
 For these critical lengths, it is therefore interesting to study the
influence of the nonlinear term $yy_{x}$ on the local asymptotic
stability of $0$ for the nonlinear KdV equation (\ref{0}). This
article is concerned with the stability property for system
(\ref{0}) with special critical length $L=2\pi$. In this particular
case, by Remark 3.6 of \cite{Rosier1997}, $a(1-\cos x)$,
$a\in\mathbb{R}$ are steady solutions of (\ref{linearized}).

Center manifolds play an important role in studying nonlinear
systems. We refer to
\cite{Carr1981,Chen-Hale-Tan,Haragus-Iooss2011,Magal-Ruan,Minh-Wu2004}
and the references therein for center manifold theories on abstract
Cauchy problems in Banach spaces. The authors in
\cite{Carr1981,Haragus-Iooss2011,Magal-Ruan} investigated directly
the evolution equations and gave some sufficient conditions for the
existence and smoothness of center manifolds. While, the authors in
\cite{Chen-Hale-Tan} presented a general result on the invariant
manifolds together with associated invariant foliations of the state
space, which can be applied directly to $C^{1}$ semigroups in Banach
space. But the method presented in \cite{Chen-Hale-Tan} has no
extension to the case of $C^{k}$-smoothness with $k>1.$ In
\cite{Minh-Wu2004}, by using the method of graph transforms, some
classical results about smoothness of invariant manifolds for maps
and the technique of ``lifting'', the existence, smoothness and
attractivity of invariant manifolds for evolutionary process on
general Banach spaces are proved when the nonlinear perturbation has
a small global Lipschitz constant and is locally $C^{k}$-smooth near
the trivial solution. Because of the existence of the nonlinear term
in \eqref{0}, the results presented in \cite{Carr1981,Magal-Ruan} do
not work for our system. Moreover, due to the fact that the linear
operator in our system \eqref{0} with $L=2\pi$ does not satisfy the
resolvent estimates provided by \cite{Haragus-Iooss2011}, we cannot
apply directly the results given in \cite{Haragus-Iooss2011}. Thanks
to the center manifold results given in \cite{Minh-Wu2004}, in this
article, we show the existence and smoothness of a center manifold
of (\ref{0}) with $L=2\pi$, and obtain that the stability property
can be determined by a reduced system of dimension one. Furthermore,
by studying the stability on this reduced one dimensional system, we
obtain the local asymptotic stability of $0$ for the original system
(\ref{0}) when $L=2\pi$. The main result of this article is the
following theorem.

\begin{theorem}
\label{main result}Let us assume that $L=2\pi$. Then  $0\in
L^{2}(0,L)$ is (locally) asymptotically stable for the nonlinear KdV equation (\ref{0}). More precisely:

\begin{itemize}
\item[(i)] For every $\varepsilon>0$, there exists $\delta=\delta
(\varepsilon)>0$ such that, if $\Vert
y_{0}\Vert_{L^{2}(0,L)}<\delta$,  then
\[
\Vert y(t,\cdot)\Vert_{L^{2}(0,L)}<\varepsilon,\quad\forall t\geq0.
\]

\item[(ii)] There exists $\delta_{1}>0$ such that, if $\Vert y_{0}\Vert
_{L^{2}(0,L)}<\delta_{1}$,  then
\[
\lim_{t\rightarrow+\infty}\Vert y(t,\cdot)\Vert_{L^{2}(0,L)}=0.
\]

\end{itemize}
\end{theorem}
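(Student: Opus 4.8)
The plan is to rewrite (\ref{0}) as a semilinear Cauchy problem $\dot y=Ay+f(y)$ on $H:=L^{2}(0,L)$, with $Ay=-y_{x}-y_{xxx}$, $D(A)=\{y\in H^{3}(0,L):y(0)=y(L)=y_{x}(L)=0\}$ and $f(y)=-yy_{x}$, and to establish the two assertions of the theorem by two independent mechanisms: an elementary energy estimate for the Lyapunov stability (i), and a one-dimensional center manifold reduction for the convergence (ii). For (i): multiplying the equation by $y$ and integrating over $(0,L)$, the three homogeneous boundary conditions kill all boundary contributions but one, leaving the dissipation identity
\[
\frac{d}{dt}\,\Vert y(t,\cdot)\Vert_{L^{2}(0,L)}^{2}=-\,y_{x}(t,0)^{2}\le 0 .
\]
Hence $t\mapsto\Vert y(t,\cdot)\Vert_{L^{2}}$ is nonincreasing; together with the local well-posedness recalled in the introduction this gives global existence of small solutions, proves (i) (one may take $\delta(\varepsilon)=\varepsilon$ for $\varepsilon$ small), and — what matters most below — confines every sufficiently small solution to a fixed small ball of $L^{2}(0,L)$ for all $t\ge0$, so that a \emph{local} center manifold will govern the dynamics for all positive times.

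For (ii) I would first carry out the spectral analysis of $A$. The operator is dissipative ($\langle Ay,y\rangle=-\tfrac12 y_{x}(0)^{2}$) and has compact resolvent, hence discrete spectrum; by Rosier's study of the critical length $L=2\pi$ the only spectral value on the imaginary axis is $\lambda=0$, an eigenvalue with eigenfunction $\varphi(x)=1-\cos x$, while $\sigma(A)\setminus\{0\}\subset\{\operatorname{Re}\lambda\le-\beta\}$ for some $\beta>0$; and since $Aw=\varphi$ has no solution in $D(A)$, the eigenvalue $0$ is algebraically simple. This gives the splitting $H=H_{c}\oplus H_{s}$ with $H_{c}=\mathbb{R}\varphi$ and spectral projection $P_{c}z=\langle z,\psi\rangle\varphi$, where $\psi=\tfrac1{3\pi}(1-\cos x)$ spans $\ker A^{\ast}$ and $\langle\varphi,\psi\rangle=1$. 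Since $f$ is not globally Lipschitz I would truncate it as $\tilde f(y)=\rho(\Vert y\Vert^{2})f(y)$ with a smooth cut-off $\rho$ supported near $0$, so that $\tilde f$ is globally Lipschitz with arbitrarily small constant and coincides with the smooth $f$ near the origin; the associated semiflow on $L^{2}(0,L)$ — well defined thanks to the Kato-type smoothing of the KdV semigroup used in the cited well-posedness results — then satisfies the hypotheses of the center manifold theorem of \cite{Minh-Wu2004}. That theorem yields, for any prescribed $k\ge3$, a $C^{k}$ local center manifold $W^{c}=\{a\varphi+g(a\varphi):|a|<r\}$ with $g:H_{c}\to H_{s}$, $g(0)=0$, $Dg(0)=0$, locally invariant and locally exponentially attracting with asymptotic phase, on which the flow reduces to a scalar equation $\dot a=\phi(a)$, $\phi(0)=\phi'(0)=0$.

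The heart of the proof is the computation of $\phi$ to third order. Writing $y=a\varphi+g(a\varphi)$ and projecting the equation with $P_{c}$ and $I-P_{c}$, using $A\varphi=0$, $A^{\ast}\psi=0$ and the \emph{crucial} fact that $\varphi\varphi'\in H_{s}$ — which holds precisely because $\langle\varphi\varphi',\psi\rangle=\tfrac1{3\pi}\int_{0}^{2\pi}(1-\cos x)^{2}\sin x\,dx=0$ — one obtains $\phi(a)=\langle f(a\varphi+g(a\varphi)),\psi\rangle$ and, from the $a^{2}$-terms of the $H_{s}$-equation, $g(a\varphi)=a^{2}g_{2}+O(a^{3})$ where $g_{2}\in D(A)\cap H_{s}$ is the unique solution of $Ag_{2}=\varphi\varphi'$. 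As $f$ is quadratic, the $a^{2}$-coefficient of $\phi$ equals $-\langle\varphi\varphi',\psi\rangle=0$, so $\dot a$ has no quadratic term, and the $a^{3}$-coefficient is
\[
\phi_{3}=-\langle(\varphi g_{2})_{x},\psi\rangle=\tfrac1{3\pi}\langle\varphi\varphi',g_{2}\rangle=\tfrac1{3\pi}\langle Ag_{2},g_{2}\rangle=-\tfrac1{6\pi}\,g_{2}'(0)^{2},
\]
the last equality coming from an integration by parts using the boundary conditions of $D(A)$. Solving the linear third-order ODE $Ag_{2}=\varphi\varphi'=\sin x-\tfrac12\sin 2x$ — the solution being $g_{2}=\tfrac12 x\sin x+\tfrac1{12}\cos 2x+\tfrac29-\tfrac{11}{36}\cos x-\pi\sin x$, whose constants are fixed by the three boundary conditions and by $\langle g_{2},\psi\rangle=0$ — gives $g_{2}'(0)=-\pi$, hence $\phi_{3}=-\pi/6<0$. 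Thus the reduced equation is $\dot a=-\tfrac{\pi}{6}\,a^{3}+O(a^{4})$, for which $a=0$ is asymptotically stable, and combined with the attractivity with asymptotic phase of $W^{c}$ and the confinement from (i) this yields $\lim_{t\to+\infty}\Vert y(t,\cdot)\Vert_{L^{2}}=0$, i.e. (ii).

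The real obstacle, I expect, is not the cubic computation but justifying the center manifold machinery of \cite{Minh-Wu2004} in this setting: one has to pin down the functional framework — the $L^{2}$ state space together with the dispersive smoothing of the KdV semigroup, or a suitable fractional-power space of $-A$ — in which the truncated semiflow is globally defined, globally Lipschitz with small constant and $C^{k}$ near $0$, and in which $A$ possesses an exponential dichotomy transverse to $\mathbb{R}\varphi$. Establishing the uniform spectral gap $\sigma(A)\setminus\{0\}\subset\{\operatorname{Re}\lambda\le-\beta\}$ from the $\lambda$-dependence of the roots of $r^{3}+r+\lambda=0$, and making precise the reduction principle that transfers asymptotic stability from the one-dimensional flow on $W^{c}$ back to (\ref{0}), are the remaining points that require care.
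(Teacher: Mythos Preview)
Your proposal is correct and follows the same overall strategy as the paper: the energy identity for (i), the spectral splitting with $\ker A=\mathbb{R}(1-\cos x)$ and a uniform gap for the rest of the spectrum, a cut-off of the nonlinearity to force a small global Lipschitz constant, the center manifold theorem of \cite{Minh-Wu2004}, and the computation of the reduced scalar ODE to third order. Your normalization ($\varphi=1-\cos x$) differs from the paper's ($\varphi=(3\pi)^{-1/2}(1-\cos x)$), and your cubic coefficient $-\pi/6$ matches the paper's $-1/18$ after the corresponding rescaling $p=\sqrt{3\pi}\,a$.

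The one place where you improve on the paper is the identity
\[
\phi_3=\tfrac{1}{3\pi}\langle \varphi\varphi',g_2\rangle=\tfrac{1}{3\pi}\langle Ag_2,g_2\rangle=-\tfrac{1}{6\pi}\,g_2'(0)^2,
\]
which makes the sign of the cubic coefficient structurally obvious (nonpositive always, strictly negative iff $g_2'(0)\neq 0$); the paper instead evaluates $\int a\varphi\varphi_x\,dx$ directly from the explicit formula for $a$ and only then sees it is negative. Conversely, the paper spends essentially all of Section~3 on the point you correctly flag as the real obstacle: proving that the truncated remainder $R(t)=\mathcal{S}(t)-S(t)$ is globally Lipschitz on $L^2(0,L)$ with constant tending to $0$ as the cut-off scale $\varepsilon\to 0$, via a chain of weighted energy estimates exploiting Kato smoothing and the $L^2$-monotonicity; this step is substantially more delicate than your sketch indicates, and is where the paper's effort lies.
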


\begin{remark}
The existence of $\delta(\varepsilon)$ is trivial and well known. In fact, one
can take $\delta(\varepsilon)=\varepsilon$ since $t\in[0,+\infty)\mapsto\Vert
y(t,\cdot)\Vert_{L^{2}(0,2\pi)}$ is nonincreasing (see also Lemma
\ref{decreasingL2norm} below). The nontrivial part of Theorem
\ref{main result} is property (ii).
\end{remark}

The organization of this paper is as follows: First, in Section~\ref{sec2},
some basic properties of the linearized system \eqref{linearized} are given.
Then, in Section~\ref{sec3}, we prove some properties of a non local
modification of the KdV equation \eqref{0} and then deduce the existence and smoothness
of the center manifold. Finally, in Section~\ref{sec4}, we analyze the dynamic
on the center manifold, which concludes the proof of the main result, i.e.
Theorem \ref{main result}.

\section{Preliminary}

\label{sec2}

In this section, we give some properties for the linearized system
(\ref{linearized}) with $L=2\pi$.

Set $X:=L^{2}\left(  0,L\right)  $. Let $A:D\left( A\right)
\rightarrow X$ be the linear operator defined by
\[
A\varphi=-\varphi_{x}-\varphi_{xxx}%
\]
with
\[
D(A)=\left\{  \varphi\in H^{3}\left(  0,L\right)  :\varphi\left(  0\right)
=\varphi\left(  L\right)  =\varphi_{x}\left(  L\right)  =0\right\}  .
\]
It is easily verified that both $A$ and its adjoint $A^{\ast}$ are
dissipative. The following proposition follows from \cite[Corollary. 4.4,
Chapter 1 ]{Pazy}. See also \cite{Rosier1997}.

\begin{proposition}
\label{PROP1}$A$ generates a $C_{0}$-semigroup of contractions on
$L^{2}(0,L).$
\end{proposition}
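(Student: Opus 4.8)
The plan is to apply the Lumer--Phillips theorem in the precise form quoted, namely \cite[Corollary~4.4, Chapter~1]{Pazy}: a densely defined closed linear operator on a Hilbert space generates a $C_{0}$-semigroup of contractions provided both the operator and its adjoint are dissipative. Thus it suffices to verify three things: $A$ is densely defined and closed, $A$ is dissipative, and $A^{\ast}$ is dissipative.

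Density is immediate since $D(A)\supset C_{c}^{\infty}(0,L)$, which is dense in $X=L^{2}(0,L)$; closedness of $A$ is standard for a third-order differential operator supplemented by the stated finitely many linear boundary conditions. For dissipativity of $A$ I would integrate by parts: for $\varphi\in D(A)$, using $\varphi(0)=\varphi(L)=\varphi_{x}(L)=0$,
\[
\mathrm{Re}\,\langle A\varphi,\varphi\rangle_{X}=-\mathrm{Re}\int_{0}^{L}(\varphi_{x}+\varphi_{xxx})\overline{\varphi}\,dx=-\tfrac{1}{2}|\varphi_{x}(0)|^{2}\leq 0 .
\]

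Next I would identify $A^{\ast}$. Integrating $\langle A\varphi,\psi\rangle_{X}$ by parts three times produces the bulk term $\langle\varphi,\psi_{x}+\psi_{xxx}\rangle_{X}$ together with boundary contributions proportional to $\varphi_{xx}(L)\overline{\psi(L)}$, $\varphi_{x}(0)\overline{\psi_{x}(0)}$ and $\varphi_{xx}(0)\overline{\psi(0)}$ (the terms carrying $\varphi(0)$, $\varphi(L)$, $\varphi_{x}(L)$ already dropping). Since $\varphi_{xx}(0),\varphi_{xx}(L),\varphi_{x}(0)$ are unconstrained as $\varphi$ ranges over $D(A)$, requiring these to vanish forces $\psi(0)=\psi(L)=\psi_{x}(0)=0$; hence $A^{\ast}\psi=\psi_{x}+\psi_{xxx}$ with $D(A^{\ast})=\{\psi\in H^{3}(0,L):\psi(0)=\psi(L)=\psi_{x}(0)=0\}$. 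The same computation then gives, for $\psi\in D(A^{\ast})$,
\[
\mathrm{Re}\,\langle A^{\ast}\psi,\psi\rangle_{X}=-\tfrac{1}{2}|\psi_{x}(L)|^{2}\leq 0 ,
\]
so $A^{\ast}$ is dissipative and \cite[Corollary~4.4, Chapter~1]{Pazy} applies directly.

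The computations are elementary; the only point demanding attention — and the step I would treat as the main (minor) obstacle — is the boundary-term bookkeeping in the identification of $D(A^{\ast})$, since an error there would corrupt the dissipativity check for $A^{\ast}$. Everything else reduces to the cited corollary.
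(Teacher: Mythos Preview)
Your proposal is correct and follows exactly the approach the paper takes: the paper simply asserts that both $A$ and $A^{\ast}$ are dissipative and invokes \cite[Corollary~4.4, Chapter~1]{Pazy} (together with a reference to \cite{Rosier1997}), without writing out the integration-by-parts details you supply. Your computations of the dissipativity inequalities and of $D(A^{\ast})$ are accurate, so this is just a fleshed-out version of the paper's one-line justification.
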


 From now on, we denote by $\left\{  S\left(  t\right)  \right\}  _{t\geq0}$
the $C_{0}$-semigroup associated with $A$. Then $S(t)y_{0}$ is the mild
solution of the linearized system \eqref{linearized} for any given initial
data $y_{0}\in L^{2}\left(  0,L\right)  $. By Proposition \ref{PROP1}, we
obtain the following lemma directly.

\begin{lemma}
\label{LE3}For every $y_{0}\in L^{2}\left(  0,L\right)  ,$ we have
\[
\left\Vert S(t)y_{0}\right\Vert _{L^{2}(0,L)}\leq\left\Vert y_{0}\right\Vert
_{L^{2}\left(  0,L\right)  },\quad \forall t\geq0.
\]

\end{lemma}

 Furthermore, the following Kato smoothing effect is given by Rosier
\cite[Proposition 3.2]{Rosier1997}.

\begin{lemma}
\label{LE1}For every $y_{0}\in L^{2}\left(  0,L\right)  $ and for every $T>0,$
we have $S(t)y_{0}\in L^{2}\left(  0,T;H^{1}(0,L)\right)  $ and
\[
\left\Vert S(t)y_{0}\right\Vert _{L^{2}\left(  0,T;H^{1}(0,L)\right)  }%
\leq\left(  \frac{4T+L}{3}\right)  ^{\frac{1}{2}}\left\Vert y_{0}\right\Vert
_{L^{2}\left(  0,L\right)  }.
\]

\end{lemma}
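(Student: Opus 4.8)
The plan is to prove this hidden-regularity (Kato smoothing) estimate by the classical multiplier method, using the multiplier $xy$. Since $D(A)$ is dense in $L^{2}(0,L)$ and, by Lemma \ref{LE3}, the map $y_{0}\mapsto S(\cdot)y_{0}$ is bounded (hence continuous) from $L^{2}(0,L)$ into $C([0,T];L^{2}(0,L))$, it suffices to establish the inequality for $y_{0}\in D(A)$ and then pass to the limit. For such $y_{0}$ the function $y(t,\cdot):=S(t)y_{0}$ is a classical solution of \eqref{linearized} with enough regularity to justify all the integrations by parts below.

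First I would multiply $y_{t}+y_{x}+y_{xxx}=0$ by $xy$ and integrate over $(0,L)$. The three resulting space integrals are
\[
\int_{0}^{L}xy\,y_{t}\,dx=\frac{1}{2}\frac{d}{dt}\int_{0}^{L}xy^{2}\,dx,\qquad
\int_{0}^{L}xy\,y_{x}\,dx=-\frac{1}{2}\int_{0}^{L}y^{2}\,dx,
\]
where the second identity uses $y(t,L)=0$ to discard the boundary term, and, after integrating by parts three times and using $y(t,0)=y(t,L)=0$ together with $y_{x}(t,L)=0$ to kill every boundary contribution,
\[
\int_{0}^{L}xy\,y_{xxx}\,dx=\frac{3}{2}\int_{0}^{L}y_{x}^{2}\,dx.
\]
Combining these gives the key differential identity
\[
\frac{1}{2}\frac{d}{dt}\int_{0}^{L}xy^{2}\,dx+\frac{3}{2}\int_{0}^{L}y_{x}^{2}\,dx=\frac{1}{2}\int_{0}^{L}y^{2}\,dx.
\]

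Integrating in time over $(0,T)$, discarding the nonnegative terminal term $\frac{1}{2}\int_{0}^{L}x\,y(T,\cdot)^{2}\,dx\ge 0$, estimating $\frac{1}{2}\int_{0}^{L}x\,y_{0}^{2}\,dx\le\frac{L}{2}\Vert y_{0}\Vert_{L^{2}(0,L)}^{2}$, and bounding $\frac{1}{2}\int_{0}^{T}\!\int_{0}^{L}y^{2}\,dx\,dt\le\frac{T}{2}\Vert y_{0}\Vert_{L^{2}(0,L)}^{2}$ via Lemma \ref{LE3}, I obtain
\[
\int_{0}^{T}\!\int_{0}^{L}y_{x}^{2}\,dx\,dt\le\frac{L+T}{3}\,\Vert y_{0}\Vert_{L^{2}(0,L)}^{2}.
\]
Adding the crude bound $\int_{0}^{T}\!\int_{0}^{L}y^{2}\,dx\,dt\le T\,\Vert y_{0}\Vert_{L^{2}(0,L)}^{2}$ (again Lemma \ref{LE3}) yields $\Vert S(t)y_{0}\Vert_{L^{2}(0,T;H^{1}(0,L))}^{2}\le\frac{4T+L}{3}\Vert y_{0}\Vert_{L^{2}(0,L)}^{2}$, which is exactly the square of the asserted constant. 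The only delicate point is the rigorous justification for merely $L^{2}$ data: the conclusion $S(t)y_{0}\in L^{2}(0,T;H^{1}(0,L))$ is itself part of what must be shown, so one derives the displayed a priori bound on the dense set $D(A)$, extracts a weakly convergent subsequence of the spatial derivatives (whose $L^{2}$-norms are uniformly controlled), identifies the limit with $\partial_{x}S(t)y_{0}$, and concludes by weak lower semicontinuity of the norm that $S(t)y_{0}$ lies in $L^{2}(0,T;H^{1}(0,L))$ with the same estimate.
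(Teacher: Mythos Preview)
Your proof is correct and follows exactly the classical multiplier argument (with weight $x$) that underlies Rosier's Proposition~3.2, which is precisely what the paper cites rather than proving Lemma~\ref{LE1} directly; this same $xy$-multiplier computation is also what the paper carries out explicitly in the nonlinear setting in Proposition~\ref{proplocal-weel-posed}. The density/weak-limit argument you give to pass from $D(A)$ to general $L^{2}$ data is standard and fine.
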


Proceeding as in \cite{Pazoto-Rosier}, we can prove the following two results.

\begin{lemma}
\label{LE2}There exists a constant $C>0$ such that for any $y_{0}\in H_{0}%
^{1}\left(  0,L\right)  ,$ the solution $S(t)y_{0}$ of (\ref{linearized})
fulfills
\[
\left\Vert S(t)y_{0}\right\Vert _{H_{0}^{1}(0,L)}\leq C\left\Vert
y_{0}\right\Vert _{H_{0}^{1}\left(  0,L\right)  },\quad \forall t\geq0.
\]

\end{lemma}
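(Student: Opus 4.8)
The plan is to derive the $H_0^1$-bound by interpolation: combine the $L^2$-contraction estimate of Lemma~\ref{LE3} with the analogous contraction estimate for $S(t)$ on $D(A)$, and then identify the interpolation space at parameter $1/3$ with $H_0^1(0,L)$.

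First, $\{S(t)\}_{t\ge0}$ is also a family of contractions on $D(A)$ equipped with the graph norm $\|\varphi\|_{D(A)}^2:=\|\varphi\|_{L^2}^2+\|A\varphi\|_{L^2}^2$. Indeed, by general semigroup theory $S(t)$ maps $D(A)$ into $D(A)$ and $AS(t)\varphi=S(t)A\varphi$ for $\varphi\in D(A)$, so applying Lemma~\ref{LE3} to both $\varphi$ and $A\varphi$ gives $\|S(t)\varphi\|_{D(A)}\le\|\varphi\|_{D(A)}$ for all $t\ge0$. Moreover, since $A$ is dissipative and generates a $C_0$-semigroup, $(0,\infty)\subset\rho(A)$, and for fixed $\lambda>0$ the boundary value problem $\lambda\varphi+\varphi'+\varphi'''=f$, $\varphi(0)=\varphi(L)=\varphi'(L)=0$, is uniquely solvable with $\|\varphi\|_{H^3(0,L)}\le C\|f\|_{L^2(0,L)}$; hence the graph norm on $D(A)$ is equivalent to the $H^3(0,L)$-norm, and $D(A)$ is topologically the closed subspace of $H^3(0,L)$ cut out by the conditions $\varphi(0)=\varphi(L)=\varphi'(L)=0$.

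By the interpolation theorem for bounded linear operators, for every $t\ge0$,
\[
\|S(t)\|_{\mathcal{L}\left([X,D(A)]_{1/3}\right)}\le\|S(t)\|_{\mathcal{L}(X)}^{2/3}\,\|S(t)\|_{\mathcal{L}(D(A))}^{1/3}\le1 .
\]
The standard identification of interpolation spaces of Sobolev spaces with boundary conditions (Lions--Magenes) then yields $[X,D(A)]_{1/3}=H_0^1(0,L)$ with equivalent norms: at the regularity level $3\cdot\frac13=1$ only the boundary conditions of order strictly less than $1-\frac12=\frac12$ are retained, i.e. $\varphi(0)=\varphi(L)=0$, whereas the order-one condition $\varphi'(L)=0$ is lost. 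Denoting by $C_1,C_2$ the constants of this norm equivalence, we get, for all $y_0\in H_0^1(0,L)$ and all $t\ge0$,
\[
\|S(t)y_0\|_{H_0^1}\le C_1\|S(t)y_0\|_{[X,D(A)]_{1/3}}\le C_1\|y_0\|_{[X,D(A)]_{1/3}}\le C_1C_2\|y_0\|_{H_0^1},
\]
which is the claim with $C:=C_1C_2$.

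The main technical point is the identification $[X,D(A)]_{1/3}=H_0^1(0,L)$: one must check that the (non-self-adjoint) boundary system $\{\varphi\mapsto\varphi(0),\ \varphi\mapsto\varphi(L),\ \varphi\mapsto\varphi'(L)\}$ is normal so that the Lions--Magenes theorem applies, together with the elliptic estimate $\|\varphi\|_{H^3}\lesssim\|\varphi\|_{L^2}+\|A\varphi\|_{L^2}$ on $D(A)$. Alternatively, and in the spirit of \cite{Pazoto-Rosier}, one may argue by energy estimates: for $y_0\in D(A)$, which is dense in $H_0^1(0,L)$, differentiate $t\mapsto\|y_x(t)\|_{L^2}^2$ along the equation; the boundary conditions $y(t,0)=y(t,L)=y_x(t,L)=0$ collapse the boundary contribution to a combination of $y_x(t,0)^2$, $y_{xx}(t,0)^2$ and $y_{xx}(t,L)^2$, and the only term with the wrong sign, $\int_0^t y_{xx}(s,L)^2\,ds$, must be absorbed by a hidden-regularity estimate for the second-order boundary trace obtained by testing the equation (or its $x$-derivative) against an appropriate weight; one then lets $y_0$ range over $H_0^1(0,L)$ by density. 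The control of this second-order trace is the delicate step in that second route, which is why I would prefer the interpolation argument above.
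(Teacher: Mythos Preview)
Your proof is correct and follows essentially the same route as the paper: both establish the uniform bound for $S(t)$ on $D(A)$ by applying the $L^2$-contraction (Lemma~\ref{LE3}) to $A\varphi$ via the commutation $AS(t)=S(t)A$, and then interpolate between $L^2$ and $D(A)$. The paper simply invokes ``a standard interpolation argument'' at the last step, whereas you spell out the identification $[X,D(A)]_{1/3}=H_0^1(0,L)$ through the Lions--Magenes machinery; your extra care in flagging the normality of the boundary system and the drop of the order-one condition $\varphi'(L)=0$ is exactly the content hidden behind the paper's one-line appeal.
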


\begin{proof}
 For any $U_{0}\in D\left(  A\right)  $, let us define $U(t):=S(t)U_{0}.$ Let
$V\left(  t\right)  =U_{t}\left(  t\right)  =AU\left(  t\right)  .$ Then $V$
is the mild solution of the system%
\[
\left\{
\begin{array}
[c]{l}%
V_{t}=AV,\\
V(0)=AU_{0}\in L^{2}\left(  0,L\right)  .
\end{array}
\right.
\]
Hence, it follows from Lemma~\ref{LE3} that
\[
\left\Vert V\left(  t\right)  \right\Vert _{L^{2}(0,L)}\leq\left\Vert
V(0)\right\Vert _{L^{2}\left(  0,L\right)  },\quad \forall t\geq0.
\]
Since $V(t)=AU(t),V\left(  0\right)  =AU_{0},$ and the norms $\left\Vert
U\right\Vert _{L^{2}\left(  0,L\right)  }+\left\Vert AU\right\Vert
_{L^{2}\left(  0,L\right)  }$ and $\left\Vert U\right\Vert _{D\left(
A\right)  }$ are equivalent on $D(A),$ we conclude that, for some constant
$C_{1}>0$ independent of $U_{0}$ and $t\geq0$, we have
\[
\left\Vert U\left(  t\right)  \right\Vert _{D\left(  A\right)  }\leq
C_{1}\left\Vert U_{0}\right\Vert _{D\left(  A\right)  }.
\]
Then the result of Lemma~\ref{LE2} follows by a standard interpolation argument.
\end{proof}

Our next proposition shows that $\left\{  S\left(  t\right)  \right\}
_{t\geq0}$ is a compact semigroup.

\begin{proposition}
\label{PROP2}Let $T>0$. There exists a constant $C>0$ such that, for every
$y_{0}\in L^{2}\left(  0,L\right)  $, we have
\begin{equation}
\left\Vert S\left(  t\right)  y_{0}\right\Vert _{H_{0}^{1}(0,L)}\leq\frac
{C}{\sqrt{t}}\left\Vert y_{0}\right\Vert _{L^{2}\left(  0,L\right)
},\quad \forall t\in(0,T].\label{T}%
\end{equation}
Consequently, the $C_{0}$-semigroup $\left\{  S\left(  t\right)  \right\}
_{t\geq0}$ generated by $A$ is compact.
\end{proposition}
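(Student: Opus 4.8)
The plan is to prove the estimate \eqref{T} for small times by interpolating between two known bounds, and then deduce compactness from the Arzelà–Ascoli / Sobolev compact embedding. The key input is the Kato smoothing effect from Lemma~\ref{LE1}, which gives an $L^2$-in-time bound on $\|S(t)y_0\|_{H^1}$, together with the uniform $L^2$ contraction from Lemma~\ref{LE3} and the $H^1_0$-boundedness from Lemma~\ref{LE2}. First I would observe that the desired pointwise-in-time estimate does not follow immediately from Lemma~\ref{LE1}, because that lemma only controls a time average. To upgrade it, I would use the semigroup property: for $0<t\le T$ write $S(t)=S(t/2)S(t/2)$, and try to control $\|S(t)y_0\|_{H^1_0}$ in terms of $\|S(s)y_0\|_{H^1_0}$ for $s$ ranging over, say, $[t/2,t]$, against which Lemma~\ref{LE2} (applied to the evolution restarted from $S(t/2)y_0$, if that datum is in $H^1_0$) or a direct energy estimate can be brought to bear.

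More concretely, the cleanest route is a standard ``averaging'' trick. By Lemma~\ref{LE1} applied on the interval $[0,t]$, there exists $s\in(t/2,t)$ with
\[
\|S(s)y_0\|_{H^1(0,L)}^2 \le \frac{2}{t}\int_0^t \|S(\tau)y_0\|_{H^1(0,L)}^2\,d\tau \le \frac{2}{t}\cdot\frac{4t+L}{3}\,\|y_0\|_{L^2(0,L)}^2 \le \frac{C_2}{t}\,\|y_0\|_{L^2(0,L)}^2,
\]
for $t\in(0,T]$, where $C_2=C_2(T)$. Moreover this $s$ can be chosen so that $S(s)y_0\in H^1(0,L)$; since $S(s)y_0\in D(A)$-closure arguments and the boundary conditions built into $D(A)$ pass to $H^1_0$ in the limit, one checks $S(s)y_0\in H^1_0(0,L)$ (this is where the homogeneous Dirichlet conditions at both endpoints are used). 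Then by Lemma~\ref{LE2}, using $S(t)y_0=S(t-s)\big(S(s)y_0\big)$ and $0<t-s<t/2$,
\[
\|S(t)y_0\|_{H^1_0(0,L)} = \|S(t-s)S(s)y_0\|_{H^1_0(0,L)} \le C\,\|S(s)y_0\|_{H^1_0(0,L)} \le \frac{C\sqrt{C_2}}{\sqrt{t}}\,\|y_0\|_{L^2(0,L)},
\]
which is \eqref{T}. For the compactness conclusion: fix $t>0$; by \eqref{T} the operator $S(t)$ maps the unit ball of $L^2(0,L)$ into a bounded subset of $H^1_0(0,L)$, which is relatively compact in $L^2(0,L)$ by the Rellich–Kondrachov theorem. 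Hence $S(t)$ is a compact operator for every $t>0$, i.e. $\{S(t)\}_{t\ge0}$ is a compact semigroup.

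The main obstacle I anticipate is the regularity/trace bookkeeping in the first step: justifying rigorously that the intermediate state $S(s)y_0$ lies in $H^1_0(0,L)$ (not merely $H^1(0,L)$) so that Lemma~\ref{LE2} applies, and that the choice of $s$ from the mean-value argument can be made compatibly with this. If one wants to avoid this subtlety entirely, an alternative is to redo the Pazoto–Rosier-type multiplier computation directly with a time weight — multiply the equation for $U(t)=S(t)y_0$ by $t\,x\,U$ (or by $t$ times the multiplier used to prove Lemma~\ref{LE2}) and integrate, absorbing the extra term coming from $\partial_t(t)=1$ using Lemma~\ref{LE1}; this produces \eqref{T} in one shot and is likely the route the authors take. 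Either way, once \eqref{T} is in hand the compactness statement is immediate from the compact embedding $H^1_0(0,L)\hookrightarrow L^2(0,L)$.
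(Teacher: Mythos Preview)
Your proposal is correct and follows essentially the same route as the paper: use the Kato smoothing bound (Lemma~\ref{LE1}) plus a mean-value/pigeonhole argument to find an intermediate time $s$ at which $\|S(s)y_0\|_{H^1_0}\le C t^{-1/2}\|y_0\|_{L^2}$, then propagate to time $t$ via the $H^1_0$-boundedness of the semigroup (Lemma~\ref{LE2}), and conclude compactness from Rellich--Kondrachov. The only cosmetic difference is that the paper selects the intermediate time in $(0,t/2]$ rather than in $(t/2,t)$; your worry about $S(s)y_0\in H^1_0$ versus $H^1$ is handled implicitly in the paper since the Dirichlet conditions at both endpoints are built into the problem, so $H^1$-regularity automatically gives $H^1_0$.
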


\begin{proof}
Let $T>0$ be fixed. For every $t\in\left(  0,T\right]$ and for every
$y_0\in L^2(0,L)$,  by Lemma \ref{LE1}, the estimate
\begin{equation}
\left\Vert S(\cdot)y_{0}\right\Vert _{L^{2}\left(  0,\frac{t}{2};H_{0}%
^{1}(0,L)\right)  }\leq\left(  \frac{2t+L}{3}\right)  ^{\frac{1}{2}}\left\Vert
y_{0}\right\Vert _{L^{2}\left(  0,L\right)  }\label{estimate2}%
\end{equation}
holds. Then, arguing by contradiction, we get the existence of $\tau\in\left(
0,t/2\right]  $ such that
\begin{equation}
\left\Vert S\left(  \tau\right)  y_{0}\right\Vert _{H_{0}^{1}(0,L)}\leq\left(
\frac{2t+L}{3}\right)  ^{\frac{1}{2}}\sqrt{\frac{2}{t}}\left\Vert
y_{0}\right\Vert _{L^{2}\left(  0,L\right)}  ,  \forall y_0\in L^2(0,L).\label{estimate1}%
\end{equation}
Now it follows from Lemma \ref{LE2} and (\ref{estimate1}) that there exists
$C^{\prime}=C^{\prime}(T)>0$ such that, for every $t\in\left(  0,T\right]  $
and every $y_{0}\in L^{2}\left(  0,L\right)  $,
\begin{align*}
\left\Vert S\left(  t\right)  y_{0}\right\Vert _{H_{0}^{1}(0,L)} &
=\left\Vert S\left(  t-\tau\right)  S\left(  \tau\right)  y_{0}\right\Vert
_{H_{0}^{1}(0,L)}\\
&  \leq C\left\Vert S\left(  \tau\right)  y_{0}\right\Vert _{H_{0}^{1}\left(
0,L\right)  }\\
&  \leq C\left(  \frac{2t+L}{3}\right)  ^{\frac{1}{2}}\sqrt{\frac{2}{t}%
}\left\Vert y_{0}\right\Vert _{L^{2}\left(  0,L\right)  }\\
&  \leq\frac{C^{\prime}}{\sqrt{t}}\left\Vert y_{0}\right\Vert _{L^{2}\left(
0,L\right)  }.
\end{align*}
Thus, for any given $T>0$, (\ref{T}) holds. Since $H^{1}(0,L)$ is compactly
embedded in $L^{2}(0,L)$, we conclude that $S\left(  t\right)  $ is compact.
\end{proof}

Let us now consider the spectral properties of the operator $A$. Firstly, we
give the definition of growth bound and essential growth bound of the
infinitesimal generator of a linear $C_{0}$-semigroup.

\begin{definition}
\textrm{Let $K:D(K)\subset X\rightarrow X$ be the infinitesimal generator of a
linear $C_{0}$-semigroup $\left\{  S_{K}(t)\right\}  _{t\geq0}$ on a Banach
space $X$. We define $\omega_{0}\left(  K\right)  \in\lbrack-\infty,+\infty)$
the \textit{growth bound }of K by
\[
\omega_{0}\left(  K\right)  :=\lim_{t\rightarrow+\infty}\frac{\ln\left(
\left\Vert S_{K}(t)\right\Vert _{\mathcal{L}\left(  X\right)  }\right)  }{t}.
\]
The \textit{essential growth bound }$\omega_{0,ess}\left(  K\right)
\in\left[  -\infty,+\infty\right)  $ of\textbf{\ }K is defined by
\[
\omega_{0,ess}\left(  K\right)  :=\lim_{t\rightarrow+\infty}\frac{\ln\left(
\left\Vert S_{K}(t)\right\Vert _{ess}\right)  }{t},
\]
where $\left\Vert S_{K}(t)\right\Vert _{ess}$ is the essential norm of
$S_{K}(t)$ defined by
\[
\left\Vert S_{K}(t)\right\Vert _{ess}=\kappa\left(  S_{K}(t)B_{X}\left(
0,1\right)  \right)  ,
\]
where $B_{X}\left(  0,1\right)  :=\left\{  x\in X:\left\Vert x\right\Vert
_{X}\leq1\right\}  $ and, for each bounded set $B\subset X$,
\[
\kappa\left(  B\right)  =\inf\left\{  \varepsilon>0:B\text{ can be covered by
a finite number of balls of radius }\leq\varepsilon\right\}
\]
is the Kuratovsky measure of non-compactness.}
\end{definition}

The following result is proved by Webb \cite[Proposition 4.11, p.~166,
Proposition 4.13, p.170]{Webb1985} and by Engel and Nagel \cite[Corollary
2.11, p.~241]{Engel-Nagel}.

\begin{theorem}
\label{TH1}Let $K:D(K)\subset X\rightarrow X$ be the infinitesimal
generator of a linear $C_{0}$-semigroup $\left\{
S_{K}(t)\right\}_{t\geq 0} $ on a Banach space $X.$ Then
\[
\omega_{0}\left(  K\right)  =\max\left(  \omega_{0,ess}\left(  K\right)
,\max_{\lambda\in\sigma\left(  K\right)  \setminus\sigma_{ess}\left(
K\right)  }Re\left(  \lambda\right)  \right)  .
\]
Assume in addition that $\omega_{0,ess}\left(  K\right)  <\omega_{0}\left(
K\right)  .$ Then for each $\gamma\in\left(  \omega_{0,ess}\left(  K\right)
,\omega_{0}\left(  K\right)  \right]  ,$ \newline$\left\{  \lambda\in
\sigma\left(  K\right)  :Re\left(  \lambda\right)  \geq\gamma\right\}
\subset\sigma_{p}(K)$ is nonempty, finite and contains only poles of the
resolvent of $K$.

\end{theorem}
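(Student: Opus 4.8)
The plan is to reduce everything to spectral statements about the single bounded operator $T:=S_{K}(t)$ for a fixed $t>0$, and then to transfer the information back to the generator $K$. The two quantities to identify are the spectral radius $r(T)$ and the essential spectral radius $r_{ess}(T):=\sup\{|\mu|:\mu\in\sigma_{ess}(T)\}$. First I would record the Gelfand formula $r(T)=\lim_{n\to\infty}\|T^{n}\|^{1/n}$ together with Nussbaum's analogue $r_{ess}(T)=\lim_{n\to\infty}\|T^{n}\|_{ess}^{1/n}$, the latter being valid for the measure-of-noncompactness norm $\|\cdot\|_{ess}$ used in the Definition above because that norm is submultiplicative and equivalent, up to a fixed factor, to the Calkin-algebra norm (so the factor washes out in the $n$-th root limit). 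Feeding in the semigroup law $T^{n}=S_{K}(nt)$ together with the two defining limits for $\omega_{0}(K)$ and $\omega_{0,ess}(K)$ then yields the identities $r(S_{K}(t))=e^{t\omega_{0}(K)}$ and $r_{ess}(S_{K}(t))=e^{t\omega_{0,ess}(K)}$, which form the bridge between the semigroup and its generator.

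The second ingredient I would assemble is the Riesz--Schauder/Nussbaum description of the peripheral spectrum of $T$ together with the ``one-sided'' spectral mapping theorem. Concretely: every $\mu\in\sigma(T)$ with $|\mu|>r_{ess}(T)$ is an isolated point of $\sigma(T)$, a pole of the resolvent of $T$ with finite-dimensional spectral projection, and the set of such $\mu$ lying in any region $\{|\mu|\ge\rho\}$ with $\rho>r_{ess}(T)$ is finite; moreover each such $\mu$ has the form $e^{t\lambda}$ for some $\lambda\in\sigma(K)\setminus\sigma_{ess}(K)$. The only inclusion available in the reverse direction is the spectral inclusion $e^{t\sigma(K)}\subseteq\sigma(S_{K}(t))$, valid for every $C_{0}$-semigroup. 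These are exactly the portion of the spectral mapping theorem that survives for general $C_{0}$-semigroups, and establishing them (or citing Webb/Engel--Nagel/Nussbaum) is where the real content sits.

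With these in hand the first assertion becomes a squeezing argument. Write $s^{+}:=\sup\{\operatorname{Re}\lambda:\lambda\in\sigma(K)\setminus\sigma_{ess}(K)\}$ (with $\sup\emptyset=-\infty$) and $\beta:=\max(\omega_{0,ess}(K),s^{+})$. For $\omega_{0}(K)\ge\beta$ I would use $\|S_{K}(t)\|_{ess}\le\|S_{K}(t)\|$, giving $\omega_{0,ess}(K)\le\omega_{0}(K)$, together with the spectral inclusion: for $\lambda\in\sigma(K)$ one has $e^{t\operatorname{Re}\lambda}=|e^{t\lambda}|\le r(S_{K}(t))=e^{t\omega_{0}(K)}$, whence $\operatorname{Re}\lambda\le\omega_{0}(K)$ and in particular $s^{+}\le\omega_{0}(K)$. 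For $\omega_{0}(K)\le\beta$ I would fix $\gamma>\beta$ and show $\sigma(S_{K}(t))$ lies in the disk of radius $e^{t\gamma}$: any $\mu\in\sigma(S_{K}(t))$ with $|\mu|>e^{t\gamma}>e^{t\omega_{0,ess}(K)}=r_{ess}(S_{K}(t))$ would, by the one-sided spectral mapping theorem, equal $e^{t\lambda}$ for some $\lambda\in\sigma(K)\setminus\sigma_{ess}(K)$, forcing $|\mu|=e^{t\operatorname{Re}\lambda}\le e^{ts^{+}}\le e^{t\beta}<e^{t\gamma}$, a contradiction. Hence $r(S_{K}(t))\le e^{t\gamma}$, i.e. $\omega_{0}(K)\le\gamma$, and letting $\gamma\downarrow\beta$ gives $\omega_{0}(K)=\beta$, the stated formula.

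For the second assertion, assume $\omega_{0,ess}(K)<\omega_{0}(K)$, fix $\gamma\in(\omega_{0,ess}(K),\omega_{0}(K)]$, and choose $\gamma'\in(\omega_{0,ess}(K),\gamma)$. The finitely many spectral values of $T=S_{K}(t)$ of modulus $\ge e^{t\gamma'}$ are poles of finite algebraic multiplicity, and I would let $P$ be their total finite-rank spectral projection, giving an invariant splitting $X=PX\oplus(I-P)X$ reducing both the semigroup and $K$. On the finite-dimensional space $PX$ the operator $K|_{PX}$ is a matrix whose eigenvalues are exactly those $\lambda\in\sigma(K)$ with $|e^{t\lambda}|\ge e^{t\gamma'}$, each a pole of the resolvent of $K$ with finite-dimensional generalized eigenspace; on $(I-P)X$ the restricted semigroup has spectral radius $\le e^{t\gamma'}$, so by the first part every $\lambda\in\sigma(K|_{(I-P)X})$ satisfies $\operatorname{Re}\lambda\le\gamma'<\gamma$. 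Thus $\{\lambda\in\sigma(K):\operatorname{Re}\lambda\ge\gamma\}\subseteq\sigma(K|_{PX})$, a finite set of poles lying in $\sigma_{p}(K)$, which gives finiteness and the pole property. For nonemptiness I would note that $\omega_{0}(K)>\omega_{0,ess}(K)$ forces $s^{+}=\omega_{0}(K)$ in the formula just proved, and that this supremum is a maximum over the finite set $\{\lambda\in\sigma(K):\operatorname{Re}\lambda\ge\gamma'\}$, hence attained at some $\lambda^{\ast}$ with $\operatorname{Re}\lambda^{\ast}=\omega_{0}(K)\ge\gamma$. The main obstacle throughout is the second paragraph: proving $r_{ess}(S_{K}(t))=e^{t\omega_{0,ess}(K)}$ and the one-sided spectral mapping theorem, since the full spectral mapping theorem fails for general $C_{0}$-semigroups and one must lean on Nussbaum's measure-of-noncompactness machinery.
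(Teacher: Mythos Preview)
The paper does not prove this theorem at all: it is quoted verbatim as a known result, with the line ``The following result is proved by Webb \cite[Proposition 4.11, p.~166, Proposition 4.13, p.~170]{Webb1985} and by Engel and Nagel \cite[Corollary 2.11, p.~241]{Engel-Nagel}'' immediately preceding the statement, and no argument follows. So there is nothing in the paper to compare your attempt against; your write-up is in effect a sketch of the proof that the cited references contain.

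That said, your outline is the standard route and is essentially correct. The identifications $r(S_K(t))=e^{t\omega_0(K)}$ and $r_{ess}(S_K(t))=e^{t\omega_{0,ess}(K)}$ via Gelfand/Nussbaum, the Browder--Nussbaum description of $\sigma(T)\setminus\{|\mu|\le r_{ess}(T)\}$ as a discrete set of finite-rank poles, and the passage back to the generator through a finite-dimensional reducing subspace are exactly the ingredients in Webb's treatment. Two small points worth tightening if you write this out in full: (i) for the ``one-sided spectral mapping'' step you should spell out why the Riesz projection $P_\mu$ associated with an isolated pole $\mu$ of $S_K(t_0)$ commutes with the entire semigroup (it is a contour integral of the resolvent of $S_K(t_0)$, hence commutes with every $S_K(t)$), so that $P_\mu X\subset D(K)$ and $K|_{P_\mu X}$ is a matrix exponentiating to $S_K(t_0)|_{P_\mu X}$; (ii) in the second assertion, when you bound $r(S_K(t)|_{(I-P)X})$ you should note explicitly that isolated eigenvalues of $S_K(t)$ outside $\{|\mu|\le r_{ess}\}$ cannot accumulate except on the circle $|\mu|=r_{ess}$, so that after removing the finitely many poles of modulus $\ge e^{t\gamma'}$ the remaining spectral radius is strictly below $e^{t\gamma'}$. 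With those two details filled in, your argument is complete.
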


As a consequence of Proposition \ref{PROP2} and Theorem \ref{TH1}, one has the
following lemma.

\begin{lemma}
\label{LE2.4} All the spectrum of the linear operator $A$ are point spectrum,
i.e., $\sigma\left(  A\right)  =\sigma_{p}\left(  A\right)  $ and $\omega
_{0}\left(  A\right)  =\underset{\lambda\in\sigma\left(  A\right)  }{\max
}Re\left(  \lambda\right)  .$ Moreover, for each $\gamma\in\left(
-\infty,\omega_{0}\left(  A\right)  \right]  ,$ $\left\{  \lambda\in
\sigma\left(  A\right)  :Re\left(  \lambda\right)  \geq\gamma\right\}  $ is
nonempty, finite and contains only poles of the resolvent of $A$.
\end{lemma}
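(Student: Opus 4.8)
The plan is to deduce Lemma~\ref{LE2.4} by combining the compactness statement of Proposition~\ref{PROP2} with the structural Theorem~\ref{TH1}; the only genuinely substantive steps are to evaluate the essential growth bound $\omega_{0,ess}(A)$ and to exhibit at least one spectral value of $A$ with real part bounded from below, after which both assertions of the lemma are read off from Theorem~\ref{TH1}.

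First I would show that $\omega_{0,ess}(A)=-\infty$. By Proposition~\ref{PROP2} the $C_{0}$-semigroup $\{S(t)\}_{t\geq 0}$ is compact, so for every $t>0$ the set $S(t)B_{X}(0,1)$ is relatively compact in $X=L^{2}(0,L)$ and can therefore be covered by finitely many balls of radius $\leq\varepsilon$ for every $\varepsilon>0$; thus $\|S(t)\|_{ess}=\kappa\bigl(S(t)B_{X}(0,1)\bigr)=0$ for all $t>0$, and hence $\omega_{0,ess}(A)=-\infty$. Note it is essential here to use the full force of Proposition~\ref{PROP2}, i.e. compactness of each operator $S(t)$, not merely norm-continuity of $t\mapsto S(t)$.

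Next I would check that $\omega_{0}(A)>-\infty$, so that the hypothesis $\omega_{0,ess}(A)<\omega_{0}(A)$ of Theorem~\ref{TH1} is met. Take $\varphi_{0}(x):=1-\cos x$. When $L=2\pi$ one has $\varphi_{0}\in C^{\infty}([0,2\pi])$, $\varphi_{0}(0)=\varphi_{0}(2\pi)=0$ and $\varphi_{0}'(2\pi)=\sin(2\pi)=0$, so $\varphi_{0}\in D(A)$; moreover $A\varphi_{0}=-\varphi_{0}'-\varphi_{0}'''=-\sin x+\sin x=0$. Hence $0\in\sigma_{p}(A)$ and $S(t)\varphi_{0}=\varphi_{0}$ for all $t\geq 0$, so $\|S(t)\|_{\mathcal{L}(X)}\geq 1$ and $\omega_{0}(A)\geq 0$. (In fact $\omega_{0}(A)=0$, since $\{S(t)\}$ is a contraction semigroup by Proposition~\ref{PROP1}, but only $\omega_{0}(A)>-\infty$ is needed below.)

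With $\omega_{0,ess}(A)=-\infty<\omega_{0}(A)$ in hand, Theorem~\ref{TH1} applies with $K=A$. Its first assertion gives
\[
\omega_{0}(A)=\max\left(\omega_{0,ess}(A),\ \max_{\lambda\in\sigma(A)\setminus\sigma_{ess}(A)}Re(\lambda)\right)=\max_{\lambda\in\sigma(A)\setminus\sigma_{ess}(A)}Re(\lambda),
\]
and its second assertion shows that for each $\gamma\in(-\infty,\omega_{0}(A)]$ the set $\Lambda_{\gamma}:=\{\lambda\in\sigma(A):Re(\lambda)\geq\gamma\}$ is a nonempty finite subset of $\sigma_{p}(A)$ consisting only of poles of the resolvent of $A$. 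Letting $\gamma\to-\infty$ and observing that each $\lambda\in\sigma(A)$ lies in $\Lambda_{Re(\lambda)}$, we conclude $\sigma(A)=\sigma_{p}(A)$ and that every point of $\sigma(A)$ is a pole of the resolvent. Finally, for an eigenvalue $\lambda$ with eigenvector $v$ one has $S(t)v=e^{\lambda t}v$ and $\|S(t)v\|\leq\|v\|$, which forces $Re(\lambda)\leq 0=\omega_{0}(A)$; hence $\Lambda_{\omega_{0}(A)}$ is nonempty and finite and realizes $\max_{\lambda\in\sigma(A)}Re(\lambda)=\omega_{0}(A)$, completing the proof. The argument is essentially an assembly of the quoted results, so I do not anticipate a real obstacle; the two points that need a little care are the passage from ``compact semigroup'' to $\omega_{0,ess}(A)=-\infty$, and verifying the strict inequality $\omega_{0,ess}(A)<\omega_{0}(A)$ required to invoke the second part of Theorem~\ref{TH1}, both handled above.
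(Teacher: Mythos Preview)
Your proof is correct and follows exactly the route the paper indicates: the paper simply states Lemma~\ref{LE2.4} ``as a consequence of Proposition~\ref{PROP2} and Theorem~\ref{TH1}'' without further detail, and you have filled in precisely those details---compactness of $S(t)$ forces $\omega_{0,ess}(A)=-\infty$, the explicit eigenfunction $1-\cos x$ (which is the paper's $\varphi$ up to normalization) gives $\omega_{0}(A)\geq 0>-\infty$, and then Theorem~\ref{TH1} yields everything. One small remark: when you write ``each $\lambda\in\sigma(A)$ lies in $\Lambda_{Re(\lambda)}$'' to conclude $\sigma(A)=\sigma_{p}(A)$, you are implicitly using the standard fact that the spectral bound satisfies $s(A)\leq\omega_{0}(A)$ (equivalently, $\{Re(\lambda)>\omega_{0}(A)\}\subset\rho(A)$), so that $Re(\lambda)\leq\omega_{0}(A)$ and hence $\gamma=Re(\lambda)$ is indeed in the admissible range $(-\infty,\omega_{0}(A)]$ for Theorem~\ref{TH1}; this is elementary semigroup theory and not a gap, but worth making explicit.
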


 From Lemma \ref{LE3} and Lemma \ref{LE2.4}, one has

\begin{lemma}
\label{Lm2} For every $\lambda\in\sigma\left(  A\right)  $, $\operatorname{Re}%
\left(  \lambda\right)  \leq0$.
\end{lemma}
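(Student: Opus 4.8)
The plan is to deduce this immediately from the contraction property of the semigroup (Lemma~\ref{LE3}) combined with the spectral description already established in Lemma~\ref{LE2.4}. The key observation is that Lemma~\ref{LE3} says precisely that $\left\Vert S(t)\right\Vert_{\mathcal{L}(X)}\leq 1$ for all $t\geq 0$, which controls the growth bound of $A$.

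First I would record that, by the definition of the growth bound and by Lemma~\ref{LE3},
\[
\omega_{0}(A)=\lim_{t\rightarrow+\infty}\frac{\ln\left(\left\Vert S(t)\right\Vert_{\mathcal{L}(X)}\right)}{t}\leq\lim_{t\rightarrow+\infty}\frac{\ln 1}{t}=0.
\]
Next I would invoke Lemma~\ref{LE2.4}, which gives $\sigma(A)=\sigma_{p}(A)$ together with the identity $\omega_{0}(A)=\max_{\lambda\in\sigma(A)}\operatorname{Re}(\lambda)$; in particular this maximum is attained and equals $\omega_{0}(A)$. Combining the two displays yields $\max_{\lambda\in\sigma(A)}\operatorname{Re}(\lambda)=\omega_{0}(A)\leq 0$, so that $\operatorname{Re}(\lambda)\leq 0$ for every $\lambda\in\sigma(A)$, which is the assertion of the lemma.

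There is essentially no obstacle here: the proof is a two-line chaining of the dissipativity estimate of Lemma~\ref{LE3} with the Webb--Engel--Nagel-type spectral identity packaged in Lemma~\ref{LE2.4}, and the compactness of the semigroup (Proposition~\ref{PROP2}), which was used to get $\sigma(A)=\sigma_{p}(A)$, plays no further role in this particular step. The only point worth a word is that $\sigma(A)$ is nonempty, but this is already contained in Lemma~\ref{LE2.4} (take $\gamma=\omega_{0}(A)$), so the maximum in the identity is legitimate.
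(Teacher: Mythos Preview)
Your proof is correct and follows exactly the route the paper indicates: the paper simply states that the lemma follows from Lemma~\ref{LE3} and Lemma~\ref{LE2.4}, and you have written out the two-line chaining of $\omega_{0}(A)\leq 0$ (from contractivity) with $\omega_{0}(A)=\max_{\lambda\in\sigma(A)}\operatorname{Re}(\lambda)$ (from the spectral identity) that this entails.
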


Let us now prove the following lemma.

\begin{lemma}
\label{Lm3} One has $\sigma_{p}\left(  A\right)  \cap i\mathbb{R}=\{0\}.$
Moreover, the kernel of $A$ is $a(1-\cos x),$ $a\in\mathbb{R}$.
\end{lemma}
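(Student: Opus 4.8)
The plan is to prove the two assertions in turn.

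\emph{The kernel.} If $\varphi\in D(A)$ and $A\varphi=0$, then $\varphi'''+\varphi'=0$ on $(0,2\pi)$, so $\varphi(x)=a+b\cos x+c\sin x$ for some constants $a,b,c$. The conditions $\varphi(0)=\varphi(2\pi)=0$ give $a+b=0$, and $\varphi'(2\pi)=0$ gives $c=0$; hence $\varphi=a(1-\cos x)$. This shows the kernel of $A$ is $\{a(1-\cos x):a\in\mathbb R\}$ and, in particular, $0\in\sigma_p(A)\cap i\mathbb R$. This step is elementary.

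\emph{An eigenfunction for an imaginary eigenvalue vanishes to second order at both endpoints.} Suppose $\lambda=i\tau$ with $\tau\in\mathbb R$ is an eigenvalue and $\varphi\in D(A)\setminus\{0\}$ a corresponding eigenfunction; an easy bootstrap gives $\varphi\in C^{\infty}([0,2\pi])$ with $\varphi'''+\varphi'+i\tau\varphi=0$ and $\varphi(0)=\varphi(2\pi)=\varphi'(2\pi)=0$. I would multiply this equation by $\overline\varphi$, integrate over $(0,2\pi)$ and take real parts. One has $\operatorname{Re}\int_0^{2\pi}\varphi'\overline\varphi\,dx=\tfrac12\big[|\varphi|^2\big]_0^{2\pi}=0$, while two integrations by parts give $\operatorname{Re}\int_0^{2\pi}\varphi'''\overline\varphi\,dx=-\operatorname{Re}\int_0^{2\pi}\varphi''\overline{\varphi'}\,dx=-\tfrac12\big[|\varphi'|^2\big]_0^{2\pi}=\tfrac12|\varphi'(0)|^2$, all boundary terms involving $\varphi(0),\varphi(2\pi),\varphi'(2\pi)$ vanishing. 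Hence the real part of the equation reduces to $\tfrac12|\varphi'(0)|^2=0$, so $\varphi'(0)=0$ as well. Thus $\varphi$ is a nonzero solution of the constant-coefficient ODE satisfying $\varphi(0)=\varphi'(0)=\varphi(2\pi)=\varphi'(2\pi)=0$.

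\emph{Characteristic roots: the simple case.} Let $\mu_1,\mu_2,\mu_3$ be the roots of $r^3+r+i\tau=0$, so $\mu_1+\mu_2+\mu_3=0$, $\mu_1\mu_2+\mu_2\mu_3+\mu_3\mu_1=1$, $\mu_1\mu_2\mu_3=-i\tau$. If they are distinct, the solutions vanishing to order two at $x=0$ form the line spanned by $\varphi_L(x)=(\mu_2-\mu_3)e^{\mu_1x}+(\mu_3-\mu_1)e^{\mu_2x}+(\mu_1-\mu_2)e^{\mu_3x}$, and those vanishing to order two at $x=2\pi$ the line spanned by $\varphi_R(x)=(\mu_2-\mu_3)e^{\mu_1(x-2\pi)}+(\mu_3-\mu_1)e^{\mu_2(x-2\pi)}+(\mu_1-\mu_2)e^{\mu_3(x-2\pi)}$. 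Since $\varphi$ lies in both lines, $\varphi_L$ and $\varphi_R$ are proportional; comparing coefficients of the linearly independent functions $e^{\mu_jx}$ gives $e^{2\pi\mu_1}=e^{2\pi\mu_2}=e^{2\pi\mu_3}$. Taking moduli and using $\mu_1+\mu_2+\mu_3=0$ forces $\operatorname{Re}\mu_1=\operatorname{Re}\mu_2=\operatorname{Re}\mu_3=0$, so $\mu_j=i\alpha_j$ with $\alpha_j\in\mathbb R$, $\sum_j\alpha_j=0$, $\sum_{j<k}\alpha_j\alpha_k=-1$ (hence $\sum_j\alpha_j^2=2$) and $\tau=\alpha_1\alpha_2\alpha_3$. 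Then $e^{2\pi i\alpha_1}=e^{2\pi i\alpha_2}=e^{2\pi i\alpha_3}$ means $m:=\alpha_1-\alpha_2$ and $n:=\alpha_2-\alpha_3$ are integers; solving with $\sum_j\alpha_j=0$ gives $\alpha_1=\tfrac{2m+n}{3}$, $\alpha_2=\tfrac{n-m}{3}$, $\alpha_3=-\tfrac{m+2n}{3}$, and $\sum_j\alpha_j^2=2$ becomes $m^2+mn+n^2=3$. Its integer solutions are $(m,n)\in\{(\pm1,\pm1),(\pm1,\mp2),(\pm2,\mp1)\}$, and in each one $m=n$, or $n=-2m$, or $m=-2n$, each case forcing one of $\alpha_1,\alpha_2,\alpha_3$ to vanish; hence $\tau=\alpha_1\alpha_2\alpha_3=0$, i.e.\ $\lambda=0$.

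\emph{Multiple roots, and conclusion.} Finally, $r^3+r+i\tau$ and $3r^2+1$ have a common root only when $\tau=\pm\tfrac{2}{3\sqrt3}$, with roots $\mp\tfrac{i}{\sqrt3}$ (double) and $\pm\tfrac{2i}{\sqrt3}$ (simple). Writing $\mu$ for the double root and $\nu=-2\mu$ for the simple one, a solution vanishing to order two at $x=0$ is, up to a scalar, $\psi(x)=(1+(\nu-\mu)x)e^{\mu x}-e^{\nu x}$; then $\psi(2\pi)=0$ would force $(1+2\pi(\nu-\mu))e^{2\pi\mu}=e^{2\pi\nu}$, impossible since (recalling $\mu,\nu\in i\mathbb R$ and $\nu-\mu=i\sqrt3$) the left side has modulus $\sqrt{1+12\pi^2}>1$ while the right has modulus $1$. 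So there is no eigenfunction for these $\tau$ either, and altogether $\sigma_p(A)\cap i\mathbb R=\{0\}$. I expect the one genuinely delicate point to be the bookkeeping in the simple-root case — reducing the endpoint conditions to $m^2+mn+n^2=3$ and verifying that every solution gives $\tau=0$; the multiplier identity and the kernel computation are routine, and one may note as a consistency check with Lemma~\ref{Lm2} that any characteristic root with nonzero real part is automatically killed by the modulus argument.
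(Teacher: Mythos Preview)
Your proof is correct. The first two steps---computing the kernel and using the multiplier identity to force $\varphi'(0)=0$---coincide exactly with the paper's argument. Where the paper then simply cites Rosier \cite[Lemma~3.5]{Rosier1997} for the conclusion, you instead carry out a self-contained characteristic-root analysis specialized to $L=2\pi$: the reduction to $e^{2\pi\mu_1}=e^{2\pi\mu_2}=e^{2\pi\mu_3}$, the Diophantine condition $m^{2}+mn+n^{2}=3$, and the separate treatment of the double-root case. This is precisely the kind of computation underlying Rosier's lemma, so your route is not genuinely different in spirit, but it has the advantage of being explicit and avoiding an external reference; the price is the bookkeeping you yourself flag. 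Both arguments are of comparable length once Rosier's proof is unpacked.
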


\begin{proof}
We have \bigskip$\lambda\in\sigma_{p}\left(  A\right)  \cap i\mathbb{R}$ if
and only if there exists $\varphi\in H^{3}\left(  0,L\right)  \backslash\{0\}$
such that
\begin{equation}
\left\{
\begin{array}
[c]{l}%
\lambda\varphi+\varphi_{x}+\varphi_{xxx}=0,\\
\varphi\left(  0\right)  =\varphi\left(  L\right)  =\varphi_{x}\left(
L\right)  =0.
\end{array}
\right.  \label{phi1}%
\end{equation}
Multiplying equation (\ref{phi1}) by $\overline{\varphi},$ and then
integrating over $[0,L]$, we obtain%
\begin{equation}
\lambda\int_{0}^{L}\varphi\overline{\varphi}dx+\int_{0}^{L}\varphi
_{x}\overline{\varphi}dx+\int_{0}^{L}\varphi_{xxx}\overline{\varphi
}dx=0.\label{phi3}%
\end{equation}
Taking the real part of (\ref{phi3}), we have
\begin{equation}
\int_{0}^{L}\frac{\varphi_{x}\bar{\varphi}+\bar{\varphi}_{x}\varphi}%
{2}\,dx+\int_{0}^{L}\frac{\varphi_{xxx}\bar{\varphi}+\bar{\varphi}%
_{xxx}\varphi}{2}\,dx=0.\label{intepart}%
\end{equation}
Integrating by parts in \eqref{intepart} and using \eqref{phi1}, we
get
\[
\varphi_{x}\left(  0\right)  =0.
\]
Hence, $\lambda\in\sigma_{p}\left(  A\right)  \cap i\mathbb{R}$ if and only if
there exists $\varphi\in H^{3}\left(  0,L\right)  \backslash\{0\}$ such that
\[
\left\{
\begin{array}
[c]{l}%
\lambda\varphi+\varphi_{x}+\varphi_{xxx}=0,\\
\varphi\left(  0\right)  =\varphi\left(  L\right)  =\varphi_{x}\left(
0\right)  =\varphi_{x}\left(  L\right)  =0,
\end{array}
\right.
\]
and the result of this lemma follows directly from the proof of Rosier
\cite[Lemma 3.5]{Rosier1997}.
\end{proof}

Combining Lemma \ref{LE2.4}, Lemma \ref{Lm2} and Lemma \ref{Lm3}, we obtain
the following corollary.

\begin{corollary}
\label{Corollary on spectrum}$0\in\sigma\left(  A\right)  =\sigma_{p}\left(
A\right)  $ and the other eigenvalues of $A$ have negative real parts which
are bounded away from $0$.

\end{corollary}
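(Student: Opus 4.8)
The plan is to assemble Lemmas \ref{LE2.4}, \ref{Lm2} and \ref{Lm3}; no new analysis is needed. First, Lemma \ref{LE2.4} gives $\sigma(A)=\sigma_p(A)$ at once, so it remains only to locate $0$ and to control the other eigenvalues. By Lemma \ref{Lm3} the kernel of $A$ is $\{a(1-\cos x):a\in\mathbb{R}\}\neq\{0\}$, so $0$ is an eigenvalue, i.e. $0\in\sigma_p(A)=\sigma(A)$; this disposes of the first assertion.

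Next I would show that every nonzero spectral value has strictly negative real part. Indeed, if $\lambda\in\sigma(A)$ then $\operatorname{Re}(\lambda)\leq0$ by Lemma \ref{Lm2}; and if moreover $\lambda\neq0$, then $\lambda\notin i\mathbb{R}$ by Lemma \ref{Lm3} (which asserts $\sigma_p(A)\cap i\mathbb{R}=\{0\}$), so $\operatorname{Re}(\lambda)\neq0$ and hence $\operatorname{Re}(\lambda)<0$. In particular, since $0\in\sigma(A)$ and all eigenvalues are nonpositive, $\omega_0(A)=\max_{\lambda\in\sigma(A)}\operatorname{Re}(\lambda)=0$.

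The only point with any content is the uniform gap, and there the finiteness clause of Lemma \ref{LE2.4} does the work. Taking $\gamma=-1\in(-\infty,\omega_0(A)]$, Lemma \ref{LE2.4} says that $\Sigma:=\{\lambda\in\sigma(A):\operatorname{Re}(\lambda)\geq-1\}$ is finite. Then $\Sigma\setminus\{0\}$ is a finite set all of whose elements have strictly negative real part, so it admits (if nonempty) a largest real part, say $-\delta_0<0$; put $\delta:=\min(\delta_0,1)>0$ (and $\delta:=1$ if $\Sigma\setminus\{0\}=\emptyset$). Now let $\lambda\in\sigma(A)$ with $\lambda\neq0$: if $\lambda\in\Sigma$ then $\operatorname{Re}(\lambda)\leq-\delta_0\leq-\delta$, while if $\lambda\notin\Sigma$ then $\operatorname{Re}(\lambda)<-1\leq-\delta$. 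Either way $\operatorname{Re}(\lambda)\leq-\delta<0$, which is the asserted uniform distance of the nonzero eigenvalues from the imaginary axis.

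I do not expect a genuine obstacle: the corollary is bookkeeping once the three lemmas are in hand. The one ingredient worth flagging is the finiteness of $\{\lambda\in\sigma(A):\operatorname{Re}(\lambda)\geq\gamma\}$ in Lemma \ref{LE2.4}, which ultimately rests on the compactness of $\{S(t)\}_{t\geq0}$ from Proposition \ref{PROP2} together with Theorem \ref{TH1}; without it one could conclude each nonzero eigenvalue has negative real part but not that they are uniformly bounded away from $0$.
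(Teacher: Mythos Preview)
Your proof is correct and follows exactly the approach indicated in the paper, which simply states that the corollary is obtained by combining Lemmas \ref{LE2.4}, \ref{Lm2} and \ref{Lm3}. You have supplied the routine bookkeeping that the paper leaves implicit, including the use of the finiteness clause in Lemma \ref{LE2.4} to extract the uniform spectral gap.
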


\section{Existence and smoothness of the center manifold\newline}

\label{sec3}

This section is devoted to show the existence and smoothness of the center
manifold for system (\ref{0}) with $L=2\pi$ by applying the results given in
\cite{Minh-Wu2004}. We would like to mention that the linear operator $A$ in
our system \eqref{0} with $L=2\pi$ does not satisfy the resolvent estimates
required in \cite{Haragus-Iooss2011}. In particular, $A$ does not generate an
analytic semigroup, but a $C_{0}$-semigroup with a Gevrey property. We refer
to \cite{Chu-Coron-Shang} and \cite{Sun} for this result. Hence, we cannot
apply the results given in \cite{Haragus-Iooss2011} to show the existence and
smoothness of the center manifold.

In order to apply the results given in \cite{Minh-Wu2004}, we need to show
that the nonlinear perturbation has a small global Lipschitz constant. To that
end, we modify the nonlinear part of the original system (\ref{0}) by using
some smooth cut-off mapping, and consider the following equation%

\begin{equation}
\left\{
\begin{array}
[c]{l}%
y_{t}+y_{x}+y_{xxx}+\Phi_{\varepsilon}(\left\Vert y\right\Vert _{L^{2}%
(0,L)})yy_{x}=0,\\
y(t,0)=y(t,L)=0,\\
y_{x}(t,L)=0,\\
y(0,x)=y_{0}(x)\in L^{2}(0,L).
\end{array}
\right.  \label{new system}%
\end{equation}
Here $\varepsilon>0$ is small enough,  and
$\Phi_{\varepsilon}:\left[
0,+\infty\right)  \rightarrow\left[  0,1\right]  $ is defined by%
\[
\Phi_{\varepsilon}\left(  x\right)  =\Phi\left(
\frac{x}{\varepsilon}\right) ,\,\forall x\in\lbrack0,+\infty),
\]
where $\Phi\in C^{\infty}\left(  \left[  0,+\infty\right)  ;\left[
0,1\right]  \right)  $ satisfies
\[
\Phi(x)=\left\{
\begin{array}
[c]{l}%
1,\text{ when }x\in [  0,\displaystyle\frac{1}{2}]  ,\\
\\
0,\text{ when }x\in\left[  1,+\infty\right)  ,
\end{array}
\right.
\]
and
\[
\Phi^{\prime}\leq0.
\]
It can be readily checked that
\begin{align}
&\Phi_{\varepsilon}(x)=1,\text{ when }x\in [0,\displaystyle\frac{1}{2}] ,
\nonumber\\
&\Phi_{\varepsilon}(x)=0,\text{ when }x\in\left[  \varepsilon,+\infty\right)  .
\label{Phi=0}%
\end{align}
Moreover, there exists some constant $C>0$ such that
\begin{equation}
0\leq-\Phi_{\varepsilon}^{\prime}(x)\leq\frac{C}{\varepsilon},\quad\forall
x\in\left[  0,+\infty\right)  . \label{derivative of Phi}%
\end{equation}
In (\ref{derivative of Phi}) and in the following, $C$ denotes various
positive constants, which may vary from line to line, but do not depend on
$\varepsilon\in\left(  0,1\right]  $ and $y_{0}\in L^{2}(0,L).$

\subsection{Well-posedness of \eqref{new system}}

In this section, we prove the following proposition on the global (in positive
time) existence and uniqueness of the solution to system (\ref{new system}).

\begin{proposition}
\label{Glocal well-posedness}For every $y_{0}\!\in\! L^{2}\left(
0,L\right) $, there exists a unique mild solution \[
y\!\in\!C([0,+\infty);\!L^{2}( 0,L) )\cap L_{loc}^{2}\left(
[0,+\infty );H_{0}^{1}\left(  0,L\right)  \right) \] of (\ref{new
system}).
\end{proposition}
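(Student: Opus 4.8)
The plan is to establish well-posedness of the modified system \eqref{new system} in two stages: first a local existence and uniqueness result via a fixed-point argument, then a global extension using the fact that the cut-off renders the nonlinearity globally Lipschitz on $L^2(0,L)$, together with an a priori bound on the $L^2$ norm. First I would set up the standard integral (Duhamel) formulation: a mild solution on $[0,T]$ is a function $y \in \mathcal{B}_T := C([0,T];L^2(0,L)) \cap L^2(0,T;H_0^1(0,L))$ satisfying
\[
y(t) = S(t)y_0 - \int_0^t S(t-s)\,\Phi_\varepsilon(\|y(s)\|_{L^2(0,L)})\,y(s)y_x(s)\,ds .
\]
The key analytic inputs are the semigroup bound of Lemma~\ref{LE3}, the Kato smoothing estimate of Lemma~\ref{LE1}, and the bilinear estimate for the KdV nonlinearity on a bounded interval (as in Rosier \cite{Rosier1997} and Coron--Cr\'epeau \cite{Coron-Crepeau}): for $u,v \in L^2(0,T;H_0^1(0,L)) \cap C([0,T];L^2(0,L))$ one has
\[
\Big\| \int_0^\cdot S(\cdot - s)\,(uv_x)(s)\,ds \Big\|_{\mathcal{B}_T}
\leq C(T)\,\|u\|_{\mathcal{B}_T}\,\|v\|_{\mathcal{B}_T},
\]
with $C(T) \to 0$ as $T \to 0$.

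Next I would show that the map $\mathcal{F}$ sending $y$ to the right-hand side of the Duhamel formula is a contraction on a suitable ball of $\mathcal{B}_T$ for $T$ small. The only extra point compared with the classical KdV fixed-point argument is to control the scalar factor $\Phi_\varepsilon(\|y\|_{L^2})$. Since $\Phi_\varepsilon$ takes values in $[0,1]$, it does not hurt the size estimate. For the difference estimate, writing $\Phi_\varepsilon(\|y_1\|)y_1 (y_1)_x - \Phi_\varepsilon(\|y_2\|)y_2(y_2)_x$ and adding and subtracting, one uses that $\Phi_\varepsilon$ is globally Lipschitz with constant $\le C/\varepsilon$ by \eqref{derivative of Phi}, that the map $y \mapsto \|y\|_{L^2}$ is $1$-Lipschitz, and the bilinear estimate; this yields contraction for $T = T(\varepsilon)$ small enough. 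This produces a unique local mild solution, and uniqueness globally by the usual connectedness argument on the maximal interval of existence.

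Finally I would upgrade to global existence. The crucial observation is that, thanks to \eqref{Phi=0}, the solution of \eqref{new system} satisfies the same $L^2$ energy dissipation as \eqref{0}: multiplying (formally, then justified by the regularity $y \in L^2_{loc}(H_0^1)$ and a density/regularization argument) the equation by $y$ and integrating over $(0,L)$, the cut-off factor $\Phi_\varepsilon(\|y\|_{L^2})$ is a pointwise-in-time constant and the boundary terms give
\[
\frac{d}{dt}\|y(t)\|_{L^2(0,L)}^2 = -\,y_x(t,0)^2 \le 0,
\]
since the cubic term $\Phi_\varepsilon(\|y\|)\int_0^L y^2 y_x\,dx = \tfrac13\Phi_\varepsilon(\|y\|)[y^3]_0^L = 0$ by the boundary conditions, and $\int_0^L y\,y_x\,dx = \tfrac12[y^2]_0^L = 0$. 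Hence $t \mapsto \|y(t)\|_{L^2(0,L)}$ is nonincreasing, so the $L^2$ norm stays bounded by $\|y_0\|_{L^2}$ on any interval of existence. Since the length $T(\varepsilon)$ of the local existence interval in the fixed-point step can be chosen depending only on $\varepsilon$ and the size of the $L^2$ data at the (re)starting time — which never exceeds $\|y_0\|_{L^2}$ — the solution can be continued by uniform steps to all of $[0,+\infty)$. The main obstacle is the bilinear estimate together with making the formal energy identity rigorous at the stated regularity level (one works first with $y_0 \in D(A)$ or mollified data, derives the identity, and passes to the limit using the continuous dependence furnished by the contraction estimates); everything else is routine. $\blacksquare$
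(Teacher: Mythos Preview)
Your proposal is correct and follows essentially the same strategy as the paper: a Banach fixed-point argument in $C([0,T];L^2)\cap L^2(0,T;H_0^1)$ using the Kato smoothing and bilinear estimates (with the extra Lipschitz factor $C/\varepsilon$ from the cut-off handled exactly as you describe), combined with the $L^2$-energy dissipation identity to bound the data uniformly and iterate the local step globally. The paper merely presents these ingredients in the reverse order (energy identity first as Lemma~\ref{decreasingL2norm}, then the local fixed-point in Proposition~\ref{proplocalex}), but the substance is the same.
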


In order to prove this proposition, one first points out that

\begin{lemma}
\label{decreasingL2norm}Let $T>0$. If
\[
y\!\in\!C([0,T];\!L^{2}( 0,L) )\cap L^{2}\left(  0,T;H_{0}^{1}\left(
0,L\right)  \right)
\]
is a mild solution of (\ref{new system}), then
\[
\frac{d}{dt}\left(  \int_{0}^{L}y^{2}\left(  t,x\right)  dx\right)  \leq0.
\]

\end{lemma}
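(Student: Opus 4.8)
The plan is to prove the stronger identity
\[
\frac{d}{dt}\int_{0}^{L}y^{2}(t,x)\,dx=-\,y_{x}(t,0)^{2}\le 0,
\]
which formally results from multiplying the first equation of \eqref{new system} by $y$ and integrating over $(0,L)$. Writing $g(t):=\Phi_{\varepsilon}(\Vert y(t,\cdot)\Vert_{L^{2}(0,L)})\in[0,1]$, which is merely a scalar coefficient depending on $t$, the three space integrals are computed by integration by parts using the boundary conditions $y(t,0)=y(t,L)=y_{x}(t,L)=0$: one has $\int_{0}^{L}yy_{x}\,dx=\tfrac12[y^{2}]_{0}^{L}=0$; next $\int_{0}^{L}yy_{xxx}\,dx=[yy_{xx}]_{0}^{L}-\int_{0}^{L}y_{x}y_{xx}\,dx=-\tfrac12[y_{x}^{2}]_{0}^{L}=\tfrac12\,y_{x}(t,0)^{2}$; and finally $g(t)\int_{0}^{L}y^{2}y_{x}\,dx=\tfrac{g(t)}{3}[y^{3}]_{0}^{L}=0$. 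Summing gives $\tfrac12\frac{d}{dt}\int_{0}^{L}y^{2}\,dx+\tfrac12\,y_{x}(t,0)^{2}=0$, hence the claim.

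The main obstacle is that a mild solution in $C([0,T];L^{2}(0,L))\cap L^{2}(0,T;H_{0}^{1}(0,L))$ is a priori not regular enough to justify these manipulations: the term $y_{xxx}$ and the boundary trace $y_{x}(t,0)$ are not classically defined, and $t\mapsto y(t,\cdot)$ need not be differentiable as an $L^{2}$-valued map. To handle this I would run a regularization/density argument. For smooth initial data, say $y_{0}\in H^{3}(0,L)$ satisfying the compatibility conditions $y_{0}(0)=y_{0}(L)=y_{0}'(L)=0$, the corresponding solution of \eqref{new system} is a strong solution — this follows as in the local well-posedness analysis, using the smoothing properties recorded in Lemmas~\ref{LE1}--\ref{LE2} and Proposition~\ref{PROP2} — so the integration by parts above is legitimate and $t\mapsto\int_{0}^{L}y^{2}\,dx$ is nonincreasing for such data. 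Then, for general $y_{0}\in L^{2}(0,L)$, I would approximate $y_{0}$ by smooth compatible data $y_{0}^{n}\to y_{0}$ in $L^{2}(0,L)$ and use continuous dependence of the mild solution on the initial data to get $y^{n}\to y$ in $C([0,T];L^{2}(0,L))$; since each $t\mapsto\int_{0}^{L}(y^{n})^{2}(t,x)\,dx$ is nonincreasing and the convergence is uniform in $t$, the limit $t\mapsto\int_{0}^{L}y^{2}(t,x)\,dx$ is nonincreasing, which is the assertion.

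A couple of points deserve care. One must check that the nonlinear term makes sense and that $\int_{0}^{L}y^{2}y_{x}\,dx$ genuinely vanishes: from $y\in L^{\infty}(0,T;L^{2}(0,L))\cap L^{2}(0,T;H_{0}^{1}(0,L))$ and the embedding $H^{1}(0,L)\hookrightarrow L^{\infty}(0,L)$ one gets $y\in L^{2}(0,T;L^{\infty}(0,L))$, so $y^{2}y_{x}\in L^{1}(0,T;L^{1}(0,L))$ and, for a.e. $t$, $y^{2}(t,\cdot)y_{x}(t,\cdot)=\tfrac13\big(y^{3}(t,\cdot)\big)_{x}$ with $y(t,\cdot)\in H_{0}^{1}(0,L)$, whence its integral is zero; this is precisely where the hypothesis $y\in L^{2}(0,T;H_{0}^{1}(0,L))$ enters. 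To avoid invoking the nonlinear well-posedness theory inside its own proof, one may instead justify the identity directly by applying a Steklov time-average (or a mollification in $t$) to the equation and passing to the limit, as in \cite{Pazoto-Rosier}. Either route reduces the lemma to the elementary integration by parts displayed above, so I expect no essential difficulty beyond this bookkeeping.
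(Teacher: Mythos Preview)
Your proposal is correct and follows exactly the paper's approach: multiply the equation by $y$, integrate by parts using the boundary conditions, and arrive at the identity $\tfrac12\frac{d}{dt}\int_0^L y^2\,dx+\tfrac12 y_x(t,0)^2=0$. In fact the paper's proof consists only of this formal computation without any discussion of the regularity/density justification you outline, so your write-up is, if anything, more careful than the original.
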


\begin{proof}
[Proof]We multiply $y_{t}+y_{x}+y_{xxx}%
+\Phi_{\varepsilon}(\left\Vert y\right\Vert _{L^{2}(0,L)})yy_{x}=0$ by $y$ and
integrate over $\left[  0,L\right]  .$ Using the boundary conditions in
(\ref{new system}) and integrations by parts, we get%
\[
\frac{1}{2}\frac{d}{dt}\int_{0}^{L}y^{2}dx+\frac{1}{2}y_{x}^{2}\left(
t,0\right)  =0.
\]
The lemma follows.
\end{proof}

By Lemma \ref{decreasingL2norm}, in order to prove Proposition
\ref{Glocal well-posedness}, it is sufficient to prove local (in positive time)
existence and uniqueness of the solution to system (\ref{new system}).

\begin{proposition}
\label{proplocalex} \label{local well-posedness}Let $\varepsilon>0,\eta>0.$
There exists $T>0$ such that for every $y_{0}\in L^{2}\left(  0,L\right)  $
with $\left\Vert y_{0}\right\Vert _{L^{2}\left(  0,L\right)  }\leq\eta,$ there
exists a unique solution $y\in C\left(  \left[  0,T\right]  ;L^{2}\left(
0,L\right)  \right)  \cap L_{{}}^{2}\left(  0,T;H_{0}^{1}\left(  0,L\right)
\right)  $ of (\ref{new system}).
\end{proposition}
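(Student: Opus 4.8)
The plan is to prove Proposition~\ref{proplocalex} by a standard Banach fixed point argument, treating the modified nonlinearity $\Phi_{\varepsilon}(\|y\|_{L^2})yy_x$ as a perturbation of the linear semigroup $\{S(t)\}_{t\geq 0}$. First I would recall the relevant linear estimates from Section~\ref{sec2}: Lemma~\ref{LE3} gives the $L^2$ contraction bound and Lemma~\ref{LE1} gives the Kato smoothing estimate $\|S(\cdot)y_0\|_{L^2(0,T;H^1)}\leq ((4T+L)/3)^{1/2}\|y_0\|_{L^2}$. One also needs the corresponding inhomogeneous estimates for the Duhamel term $\int_0^t S(t-s)f(s)\,ds$ in both the $C([0,T];L^2)$ and $L^2(0,T;H^1_0)$ norms; these follow from the homogeneous estimates together with the dissipativity of $A$ and $A^*$ (the required bounds are obtained exactly as in Rosier \cite{Rosier1997}, who proves them for the unmodified KdV equation). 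The natural solution space is
\[
B:=C\left([0,T];L^{2}(0,L)\right)\cap L^{2}\left(0,T;H_{0}^{1}(0,L)\right),
\]
equipped with the norm $\|y\|_{B}:=\sup_{[0,T]}\|y(t,\cdot)\|_{L^2}+\|y\|_{L^2(0,T;H^1_0)}$, and I would work in the closed ball $\{y\in B:\|y\|_B\leq R\}$ for a suitable radius $R$ depending on $\eta$.

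Next I would define the map $\Gamma(y)(t):=S(t)y_0+\int_0^t S(t-s)\,F(y(s))\,ds$ with $F(y):=-\Phi_{\varepsilon}(\|y\|_{L^2(0,L)})\,yy_x$, and show $\Gamma$ maps the ball into itself and is a contraction for $T$ small. The crucial point is the bilinear estimate: writing $yy_x=\tfrac12(y^2)_x$, one has the classical bound (see \cite{Rosier1997, Coron-Crepeau})
\[
\left\|\int_{0}^{t}S(t-s)(u v)_x(s)\,ds\right\|_{B}\le C(T)\,\|u\|_{B}\,\|v\|_{B},
\]
where $C(T)\to 0$ as $T\to 0^+$. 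The cut-off factor $\Phi_{\varepsilon}$ only helps here: since $0\leq\Phi_{\varepsilon}\leq 1$, it can simply be bounded by $1$ in the range estimate, so $\|F(y)\|$ is controlled by $\|yy_x\|$ just as in the unmodified equation. For the contraction estimate one must also handle the difference $\Phi_{\varepsilon}(\|y_1\|_{L^2})-\Phi_{\varepsilon}(\|y_2\|_{L^2})$, which is Lipschitz by \eqref{derivative of Phi} (with constant $C/\varepsilon$), combined with $\big|\|y_1\|_{L^2}-\|y_2\|_{L^2}\big|\leq\|y_1-y_2\|_{L^2}$; one then splits
\[
F(y_1)-F(y_2)=\Phi_{\varepsilon}(\|y_1\|)\,(y_1(y_1)_x-y_2(y_2)_x)+\big(\Phi_{\varepsilon}(\|y_1\|)-\Phi_{\varepsilon}(\|y_2\|)\big)\,y_2(y_2)_x,
\]
estimating the first term by the bilinear bound and the second by the Lipschitz bound on $\Phi_\varepsilon$ times $\|y_2\|_B^2$. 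Choosing $T=T(\varepsilon,\eta)$ small enough that the total Lipschitz constant of $\Gamma$ is $\leq 1/2$ and the ball is invariant yields existence and uniqueness of the fixed point, which is the desired mild solution.

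The main obstacle is establishing the smoothing/bilinear estimate with a constant that vanishes as $T\to 0$, since without it one cannot close the contraction; this is where the Kato smoothing effect (Lemma~\ref{LE1}) is essential, because the product $yy_x$ is only meaningful once $y\in L^2(0,T;H^1)$, and one must verify that $\int_0^t S(t-s)(y^2)_x\,ds$ lands back in $B$ with the right small constant. This estimate is not new — it is exactly the one underlying Rosier's well-posedness proof in \cite{Rosier1997} — so I would cite it rather than reprove it, noting only that the presence of the bounded factor $\Phi_{\varepsilon}$ does not affect the range estimates and contributes only the extra Lipschitz term above, which carries a harmless $\varepsilon$-dependent constant absorbed by taking $T$ smaller. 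A minor additional point worth checking is that the fixed point obtained in $L^2(0,T;H^1_0)$ actually has trace zero in the appropriate sense so that the boundary conditions in \eqref{new system} hold; this is inherited from the domain of $A$ and the density of $D(A)$ in the usual way.
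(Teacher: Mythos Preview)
Your proposal is correct and follows essentially the same route as the paper: both set up a Banach fixed point argument in $Y_T=C([0,T];L^2)\cap L^2(0,T;H^1_0)$, use Rosier's linear and inhomogeneous smoothing estimates (Proposition~4.1 of \cite{Rosier1997}), split the difference of the modified nonlinearity exactly as you do, and exploit the Lipschitz bound \eqref{derivative of Phi} on $\Phi_\varepsilon$ together with a $T^{1/4}$ gain from Gagliardo--Nirenberg to contract for $T$ small. The only cosmetic difference is that the paper estimates the nonlinear forcing directly in $L^1(0,T;L^2)$ rather than quoting a packaged bilinear estimate on $(uv)_x$, but the underlying inequalities are the same.
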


\begin{proof}
The case where $\Phi_{\varepsilon}\equiv1$ is proved in
\cite{Menzala-Vasconcellos-Zuazua}. Adapting the proof given in
\cite{Menzala-Vasconcellos-Zuazua}, we get the existence of $T$ together with
the existence and uniqueness of mild solution $y$. We briefly give the proof
since some estimates given in the proof will be used later on.

Using the variation of constants formula, system (\ref{new system})
can be written in the following integral form:
\begin{align}
y\left(  t,\cdot\right)   &  =S\left(  t\right)  y_{0}+\int_{0}^{t}S\left(
t-s\right)  \Phi_{\varepsilon}\left(  \left\Vert y\left(  s,\cdot\right)
\right\Vert _{L^{2}\left(  0,L\right)  }\right)  y\left(  s,\cdot\right)
y_{x}\left(  s,\cdot\right)  ds\nonumber\\
&  :=\left[  \phi\left(  y\right)  \right]  \left(  t\right)  .
\label{integral form}%
\end{align}
We will show that the nonlinear map $\phi$ is a contraction from
$Y_{T}:=C\left(  \left[  0,T\right]  ;L^{2}\left(  0,L\right)  \right)  \cap
L^{2}\left(  0,T;H_{0}^{1}\left(  0,L\right)  \right)  $ into itself when
$T>0$ is small enough.

 Firstly, we prove that $\phi$ maps continuously $Y_{T}$ into itself.
Let us first show that if $y\in Y_{T},$ $\Phi_{\varepsilon}\left(
\left\Vert y\right\Vert _{L^{2}\left(  0,L\right)  }\right)
yy_{x}\in L^{1}\left(  0,T;L^{2}\left(  0,L\right)  \right)  $ and
the map $y\rightarrow\Phi_{\varepsilon}\left(  \left\Vert
y\right\Vert _{L^{2}\left( 0,L\right)  }\right)  yy_{x}$ is
continuous. Indeed, let $y,z\in Y_{T}.$ Applying the triangular
inequality, H\"{o}lder's inequality and Sobolev's embedding
$H_{0}^{1}\left(  0,L\right)  \subset C^{0}([0,L])$ together with
\eqref{derivative of Phi}, we get%
\begin{align}
&  \left\Vert \Phi_{\varepsilon}\left(  \left\Vert y\right\Vert _{L^{2}\left(
0,L\right)  }\right)  yy_{x}-\Phi_{\varepsilon}\left(  \left\Vert z\right\Vert
_{L^{2}\left(  0,L\right)  }\right)  zz_{x}\right\Vert _{L^{1}\left(
0,T;L^{2}\left(  0,L\right)  \right)  }\nonumber\\
  \leq &\left\Vert \left(  yy_{x}-zz_{x}\right)  \right\Vert _{L^{2}\left(
0,T;L^{2}\left(  0,L\right)  \right)  }+\left\Vert \left[  \Phi_{\varepsilon
}\left(  \left\Vert y\right\Vert _{L^{2}\left(  0,L\right)  }\right)
-\Phi_{\varepsilon}\left(  \left\Vert z\right\Vert _{L^{2}\left(  0,L\right)
}\right)  \right]  zz_{x}\right\Vert _{L^{1}\left(  0,T;L^{2}\left(
0,L\right)  \right)  }\nonumber\\
 \leq &\left\Vert \left(  y-z\right)  y_{x}+\left( y_{x}-z_{x}\right)
z\right\Vert _{L^{1}\left(  0,T;L^{2}\left( 0,L\right)  \right)
}+\frac {C}{\varepsilon}\left\Vert \left\Vert y-z\right\Vert
_{L^{2}\left( 0,L\right)  }zz_{x}\right\Vert _{L^{1}\left(
0,T;L^{2}\left(  0,L\right)
\right)  }\nonumber\\
 \leq &\int_{0}^{T}\left\Vert \left(  y-z\right)  y_{x}\right\Vert
_{L^{2}\left(  0,L\right)  }dt+\int_{0}^{T}\left\Vert \left(  y_{x}%
-z_{x}\right)  z\right\Vert _{L^{2}\left(  0,L\right)  }dt\nonumber\\
&  +\frac{C}{\varepsilon}\int_{0}^{T}\left\Vert y-z\right\Vert _{L^{2}\left(
0,L\right)  }\left\Vert zz_{x}\right\Vert _{L^{2}\left(  0,L\right)
}dt\nonumber\\
\leq  & \,\,\,C\int_{0}^{T}\left\Vert y-z\right\Vert
_{L^{\infty}\left( 0,L\right)
}\left\Vert y_{x}\right\Vert _{L^{2}\left(  0,L\right)  }dt+C\int_{0}%
^{T}\left\Vert z\right\Vert _{L^{\infty}\left(  0,L\right)  }\left\Vert
y_{x}-z_{x}\right\Vert _{L^{2}\left(  0,L\right)  }dt\nonumber\\
&  +\frac{C}{\varepsilon}\left\Vert y-z\right\Vert _{L^{\infty}(0,T;L^{2}%
\left(  0,L\right)  )}\int_{0}^{T}\left\Vert z\right\Vert _{L^{\infty}\left(
0,L\right)  }\left\Vert z_{x}\right\Vert _{L^{2}\left(  0,L\right)
}dt\nonumber\\
 \leq & \,\,\,C\left\Vert y-z\right\Vert _{L^{2}(0,T;L^{\infty}\left(
0,L\right) )}\left\Vert y_{x}\right\Vert _{L^{2}\left(
0,T;L^{2}\left(  0,L\right)
\right)  }\nonumber\\
&  +C\left\Vert z\right\Vert _{L^{2}(0,T;L^{\infty}\left(  0,L\right)
)}\left\Vert y_{x}-z_{x}\right\Vert _{L^{2}\left(  0,T;L^{2}\left(
0,L\right)  \right)  }\nonumber\\
&  +\frac{C}{\varepsilon}\left\Vert y-z\right\Vert _{L^{\infty}(0,T;L^{2
}\left(  0,L\right)  )}\left\Vert z\right\Vert _{L^{2}(0,T;L^{\infty}\left(
0,L\right)  )}\left\Vert z_{x}\right\Vert _{L^{2}\left(  0,T;L^{2}\left(
0,L\right)  \right)  }. \label{Phiy-Phiz}%
\end{align}
By the classical Gagliardo-Nirenberg inequality, we have%
\begin{equation}
\label{15.1*}\left\Vert u\right\Vert _{L^{\infty}(0,L)}\leq C\left\Vert
u\right\Vert _{L^{2}(0,L)}^{\frac{1}{2}}\left\Vert u_{x}\right\Vert
_{L^{2}(0,L)}^{\frac{1}{2}},\quad \forall u\in H_{0}^{1}\left(  0,L\right)  .
\end{equation}
Hence,%
\begin{align*}
\int_{0}^{T}\left\Vert u\right\Vert _{L^{\infty}\left(  0,L\right)  }^{2}dt
&  \leq C\int_{0}^{T}\left\Vert u\right\Vert _{L^{2}\left(  0,L\right)
}\left\Vert u_{x}\right\Vert _{L^{2}\left(  0,L\right)  }dt\\
&  \leq C\left\Vert u\right\Vert _{L^{\infty}\left(  0,T;L^{2}\left(
0,L\right)  \right)  }\int_{0}^{T}\left\Vert u_{x}\right\Vert _{L^{2}\left(
0,L\right)  }dt\\
&  \leq C\left\Vert u\right\Vert _{L^{\infty}\left(  0,T;L^{2}\left(
0,L\right)  \right)  }T^{\frac{1}{2}}\left\Vert u_{x}\right\Vert
_{L^{2}\left(  0,T;L^{2}\left(  0,L\right)  \right)  }.
\end{align*}
Consequently, we get%
\begin{align*}
\left\Vert u\right\Vert _{L^{2}(0,T;L^{\infty}\left(  0,L\right)  )}  &  \leq
C\left\Vert u\right\Vert _{L^{\infty}\left(  0,T;L^{2}\left(  0,L\right)
\right)  }^{\frac{1}{2}}T^{\frac{1}{4}}\left\Vert u_{x}\right\Vert
_{L^{2}\left(  0,T;L^{2}\left(  0,L\right)  \right)  }^{\frac{1}{2}}\\
&  \leq CT^{\frac{1}{4}}\left\Vert u\right\Vert _{Y_{T}},\quad  \forall u\in Y_{T}.
\end{align*}
Thus, it follows from (\ref{Phiy-Phiz}) that
\begin{align}
&  \left\Vert \Phi_{\varepsilon}\left(  \left\Vert y\right\Vert _{L^{2}\left(
0,L\right)  }\right)  yy_{x}-\Phi_{\varepsilon}\left(  \left\Vert z\right\Vert
_{L^{2}\left(  0,L\right)  }\right)  zz_{x}\right\Vert _{L^{1}\left(
0,T;L^{2}\left(  0,L\right)  \right)  }\nonumber\\
  \leq & \ CT^{\frac{1}{4}}\left\Vert y-z\right\Vert _{Y_{T}}\left\Vert
y_{x}\right\Vert _{L^{2}\left(  0,T;L^{2}\left(  0,L\right)  \right)
} +CT^{\frac{1}{4}}\left\Vert z\right\Vert _{Y_{T}}\left\Vert y_{x}%
-z_{x}\right\Vert _{L^{2}\left(  0,T;L^{2}\left(  0,L\right)  \right)}\nonumber\\
&  +\frac{C}{\varepsilon}\left\Vert y-z\right\Vert _{L^{\infty}\left(
0,T;L^{2}\left(  0,L\right)  \right)  }T^{\frac{1}{4}}\left\Vert z\right\Vert
_{Y_{T}}\left\Vert z_{x}\right\Vert _{L^{2}\left(  0,T;L^{2}\left(
0,L\right)  \right)  }\nonumber\\
  \leq & \left\Vert y-z\right\Vert _{Y_{T}}T^{\frac{1}{4}}C\left(  \left\Vert
y\right\Vert _{Y_{T}}+\left\Vert z\right\Vert _{Y_{T}}+\frac{1}{\varepsilon
}\left\Vert z\right\Vert _{Y_{T}}^{2}\right)  , \label{Phiy-Phiz-final}%
\end{align}
which implies that $\Phi_{\varepsilon}\left( \left\Vert y\right\Vert
_{L^{2}\left(  0,L\right)  }\right) yy_{x}\in L^{1}\left(
0,T;L^{2}\left( 0,L\right)  \right)  $ and that the map
\[
y\rightarrow\Phi_{\varepsilon}\left(  \left\Vert y\right\Vert _{L^{2}\left(
0,L\right)  }\right)  yy_{x}%
\]
is continuous from $Y_{T}$ to $L^{1}\left(  0,T;L^{2}\left(  0,L\right)
\right)  $.

By Proposition 4.1 in \cite{Rosier1997}, we obtain that
\[
\int_{0}^{t}S\left(  t-s\right)  \Phi_{\varepsilon}\left(  \left\Vert y\left(
s,\cdot\right)  \right\Vert _{L^{2}\left(  0,L\right)  }\right)  y\left(
s,\cdot\right)  y_{x}\left(  s,\cdot\right)  ds
\]
lies in $Y_{T},$ and the map
\[
\Phi_{\varepsilon}\left(  \left\Vert y\right\Vert _{L^{2}\left(  0,L\right)
}\right)  yy_{x}\rightarrow\int_{0}^{t}S\left(  t-s\right)  \Phi_{\varepsilon
}\left(  \left\Vert y\left(  s,\cdot\right)  \right\Vert _{L^{2}\left(
0,L\right)  }\right)  y\left(  s,\cdot\right)  y_{x}\left(  s,\cdot\right)
ds
\]
is continuous. This fact, together with the continuity of the map
$y\rightarrow\Phi_{\varepsilon}\left(  \left\Vert y\right\Vert
_{L^{2}\left( 0,L\right)  }\right)  yy_{x}$ from $Y_{T}$ to
$L^{1}\left(  0,T;L^{2}\left( 0,L\right)  \right)  $ and $S\left(
t\right)  y_{0}\in Y_{T}$ (thanks to Lemma \ref{LE3} and Lemma
\ref{LE1}), leads to the conclusion that $\phi$ maps continuously
$Y_{T}$ into itself.

Let us now prove that $\phi$ is a contraction in a suitable ball
 $B_R$ of $Y_{T}$ when $T>0$ is small enough.
Obviously,
\[
\phi\left(  y\right)  -\phi\left(  z\right)  =\int_{0}^{t}S\left(  t-s\right)
\left[  \Phi_{\varepsilon}\left(  \left\Vert y\right\Vert _{L^{2}\left(
0,L\right)  }\right)  yy_{x}\left(  s\right)  -\Phi_{\varepsilon}\left(
\left\Vert z\right\Vert _{L^{2}\left(  0,L\right)  }\right)  zz_{x}\left(
s\right)  \right]  ds.
\]
In view of the proof of Proposition 4.1 in \cite{Rosier1997} and
(\ref{Phiy-Phiz-final}), we deduce that
\begin{align}
&  \left\Vert \phi\left(  y\right)  -\phi\left(  z\right)  \right\Vert
_{Y_{T}}\nonumber\\
 \leq & \left(  1+\left(  \frac{T+2L}{3}\right)  ^{\frac{1}{2}}\right)
\left\Vert \Phi_{\varepsilon}\left(  \left\Vert y\right\Vert _{L^{2}\left(
0,L\right)  }\right)  yy_{x}-\Phi_{\varepsilon}\left(  \left\Vert z\right\Vert
_{L^{2}\left(  0,L\right)  }\right)  zz_{x}\right\Vert _{L^{1}\left(
0,T;L^{2}\left(  0,L\right)  \right)  }\nonumber\\
  \leq &  C\left(  1+\sqrt{T}\right)  \left\Vert \Phi_{\varepsilon}\left(
\left\Vert y\right\Vert _{L^{2}\left(  0,L\right)  }\right)  yy_{x}%
-\Phi_{\varepsilon}\left(  \left\Vert z\right\Vert _{L^{2}\left(  0,L\right)
}\right)  zz_{x}\right\Vert _{L^{1}\left(  0,T;L^{2}\left(  0,L\right)
\right)  }\nonumber\\
 \leq  & C\left(  1+\sqrt{T}\right)  \left\Vert y-z\right\Vert _{Y_{T}}%
T^{\frac{1}{4}}\left(  \left\Vert y\right\Vert _{Y_{T}}+\left\Vert
z\right\Vert _{Y_{T}}+\frac{1}{\varepsilon}\left\Vert z\right\Vert _{Y_{T}%
}^{2}\right)  ,\label{phi(y)-phi(z)}%
\end{align}
which shows that $\phi$ is a contraction in the ball $B_{R}$ of $Y_{T}$ if%
\begin{equation}
C\left(  1+\sqrt{T}\right)  T^{\frac{1}{4}}\left(  2R+\frac{1}{\varepsilon
}R^{2}\right)  <1. \label{T-R}%
\end{equation}
Therefore, the proof will be complete if we could show that for a
suitable choice of $R$ and $T$ satisfying (\ref{T-R}), the map
$\phi$ sends $B_{R}$ into itself.

It can be deduced from the definition of $\phi\left(  y\right)  $
given in (\ref{integral form}), Lemma \ref{LE3}, Lemma \ref{LE1} and
(\ref{phi(y)-phi(z)}) with $z=0$ that there exists $\bar C>0$
independent of $\varepsilon\in(0,1]$, $y_{0}\in L^{2}(0,L)$ and
$T>0$,  such that
\begin{align*}
\left\Vert \phi\left(  y\right)  \right\Vert _{Y_{T}}  &  \leq\left(
1+\left(  \frac{4T+L}{3}\right)  ^{\frac{1}{2}}\right)  \left\Vert
y_{0}\right\Vert _{L^{2}\left(  0,L\right)  }+\left\Vert y\right\Vert _{Y_{T}%
}^{2}T^{\frac{1}{4}}C\left(  1+\sqrt{T}\right) \\
&  \leq\left(  1+\left(  \frac{4T+L}{3}\right)  ^{\frac{1}{2}}\right)
\left\Vert y_{0}\right\Vert _{L^{2}\left(  0,L\right)  }+R^{2}T^{\frac{1}{4}%
}C\left(  1+\sqrt{T}\right) \\
&  \leq\bar C\left(  1+\sqrt{T}\right)  \left(  \left\Vert y_{0}\right\Vert
_{L^{2}\left(  0,L\right)  }+R^{2}T^{\frac{1}{4}}\right)  ,\quad \forall y\in B_{R}.
\end{align*}
Now let $\left\Vert y_{0}\right\Vert _{L^{2}\left( 0,L\right)} \leq
\eta$, and set $R:=2\eta\bar C. $
Then%
\begin{equation}
\label{phinormest}\left\Vert \phi\left(  y\right)  \right\Vert _{Y_{T}}%
\leq\eta\bar C\left(  1+\sqrt{T}\right)  \left(  1+4\bar C^{2} \eta
T^{\frac{1}{4}}\right)  ,\quad \forall y\in B_{R}.
\end{equation}
It is clear that we can choose $T>0$ sufficiently small such that
\[
\left(  1+\sqrt{T}\right)  \left(  1+4 \eta\bar C^{2} T^{\frac{1}{4}}\right)
\leq2,
\]
which, together with \eqref{phinormest} implies that $\phi$ maps $B_{R}$ into
itself. Moreover, decreasing $T$ if necessary allows us to guarantee
(\ref{T-R}) as well. The proof of Proposition \ref{proplocalex} is complete.
\end{proof}

\begin{proposition}
\label{proplocal-weel-posed} There exists $C>0$ such that for every
$\varepsilon>0,$ for every $y_{0}\in L^{2}\left(  0,L\right)  $ and
for every $T>0,$ the unique solution of (\ref{new system}) satisfies
\begin{equation} \left\Vert y\right\Vert _{L^{2}\left(
0,T;H_{0}^{1}\left(  0,L\right) \right)
}^{2}\leq\frac{8T+2L}{3}\left\Vert y_{0}\right\Vert _{L^{2}\left(
0,L\right)  }^{2}+CT\left\Vert y_{0}\right\Vert
_{L^{2}\left(
0,L\right)  }^{4}. \label{Prop-local well-posedness-inequality}%
\end{equation}

\end{proposition}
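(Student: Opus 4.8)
The plan is to derive \eqref{Prop-local well-posedness-inequality} from the classical multiplier (``Kato smoothing'') computation, carried out first on regular solutions and then extended to general data by density. So I would begin with $y_{0}$ in the dense subspace $D(A)\subset L^{2}(0,L)$: running the contraction argument of Proposition~\ref{proplocalex} in a space of higher $x$-regularity is legitimate because, for $y\in D(A)$, the cut-off nonlinearity $\Phi_{\varepsilon}(\|y\|_{L^{2}(0,L)})yy_{x}$ still vanishes at $x=0,L$ and its $x$-derivative still vanishes at $x=L$, so it is compatible with the boundary conditions, and because the cut-off keeps the nonlinear map globally Lipschitz; this produces a solution of \eqref{new system} smooth enough in $x$ and $t$ to justify all the integrations by parts below. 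The estimate obtained for such $y_{0}$ then passes to arbitrary $y_{0}\in L^{2}(0,L)$ by density, using the continuity of the data-to-solution map $L^{2}(0,L)\to C([0,T];L^{2}(0,L))\cap L^{2}(0,T;H_{0}^{1}(0,L))$ established in the proof of Proposition~\ref{proplocalex}; and every constant below is uniform in $\varepsilon$ because only $0\le\Phi_{\varepsilon}\le1$ enters.

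For a regular solution, multiply the first equation of \eqref{new system} by $xy$ and integrate over $(0,L)$. Using the boundary conditions $y(t,0)=y(t,L)=y_{x}(t,L)=0$, every boundary term cancels and one is left with
\[
\frac{3}{2}\int_{0}^{L}y_{x}^{2}\,dx=-\frac{1}{2}\frac{d}{dt}\int_{0}^{L}xy^{2}\,dx+\frac{1}{2}\int_{0}^{L}y^{2}\,dx+\frac{1}{3}\,\Phi_{\varepsilon}\!\left(\|y\|_{L^{2}(0,L)}\right)\int_{0}^{L}y^{3}\,dx .
\]
Integrate this identity over $[0,T]$. On the right-hand side: the term $-\frac{1}{2}\int_{0}^{L}xy^{2}(T,x)\,dx$ is nonpositive and is discarded; $\frac{1}{2}\int_{0}^{L}xy_{0}^{2}\,dx\le\frac{L}{2}\|y_{0}\|_{L^{2}(0,L)}^{2}$; and by Lemma~\ref{decreasingL2norm} one has $\|y(t,\cdot)\|_{L^{2}(0,L)}\le\|y_{0}\|_{L^{2}(0,L)}$ for every $t\ge0$, whence $\frac{1}{2}\int_{0}^{T}\!\!\int_{0}^{L}y^{2}\,dx\,dt\le\frac{T}{2}\|y_{0}\|_{L^{2}(0,L)}^{2}$.

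There remains the cubic term. Using $0\le\Phi_{\varepsilon}\le1$, $\left|\int_{0}^{L}y^{3}\,dx\right|\le\|y\|_{L^{\infty}(0,L)}\|y\|_{L^{2}(0,L)}^{2}$, the elementary inequality $\|u\|_{L^{\infty}(0,L)}\le\sqrt{L}\,\|u_{x}\|_{L^{2}(0,L)}$ for $u\in H_{0}^{1}(0,L)$, and again Lemma~\ref{decreasingL2norm}, one gets
\[
\frac{1}{3}\int_{0}^{T}\Phi_{\varepsilon}\left|\int_{0}^{L}y^{3}\,dx\right|dt\le\frac{\sqrt{L}}{3}\,\|y_{0}\|_{L^{2}(0,L)}^{2}\int_{0}^{T}\|y_{x}(t,\cdot)\|_{L^{2}(0,L)}\,dt\le\frac{\sqrt{L}}{3}\,\|y_{0}\|_{L^{2}(0,L)}^{2}\,T^{1/2}\,\|y_{x}\|_{L^{2}(0,T;L^{2}(0,L))} ,
\]
and Young's inequality bounds the last expression by $\frac{3}{4}\|y_{x}\|_{L^{2}(0,T;L^{2}(0,L))}^{2}+CT\|y_{0}\|_{L^{2}(0,L)}^{4}$. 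Inserting the four bounds into the integrated identity and absorbing $\frac{3}{4}\|y_{x}\|_{L^{2}(0,T;L^{2}(0,L))}^{2}$ into the left-hand side $\frac{3}{2}\|y_{x}\|_{L^{2}(0,T;L^{2}(0,L))}^{2}$ yields $\|y_{x}\|_{L^{2}(0,T;L^{2}(0,L))}^{2}\le\frac{2}{3}(L+T)\|y_{0}\|_{L^{2}(0,L)}^{2}+CT\|y_{0}\|_{L^{2}(0,L)}^{4}$; combining this with $\|y(t,\cdot)\|_{L^{2}(0,L)}\le\|y_{0}\|_{L^{2}(0,L)}$ gives \eqref{Prop-local well-posedness-inequality} (the coefficient $\frac{8T+2L}{3}$ is not optimal).

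The algebra above is routine; the genuinely delicate point is the one flagged in the first paragraph. The mild solution provided by Proposition~\ref{Glocal well-posedness} is a priori only of class $C([0,T];L^{2}(0,L))\cap L^{2}(0,T;H_{0}^{1}(0,L))$, which is not enough to give a meaning to $\int_{0}^{L}xy\,y_{xxx}\,dx$ or to $\frac{d}{dt}\int_{0}^{L}xy^{2}\,dx$ directly. So the main obstacle is the regularity bookkeeping: one must first construct solutions that are regular in $x$ and $t$ for regular data---propagating the extra smoothness through the (globally Lipschitz, thanks to the cut-off) fixed-point map and using the smoothing of the semigroup $\{S(t)\}_{t\ge0}$---establish the multiplier identity at that level of regularity, and only then pass to the limit to recover \eqref{Prop-local well-posedness-inequality} for arbitrary $y_{0}\in L^{2}(0,L)$.
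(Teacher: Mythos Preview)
Your proof is correct and follows essentially the same route as the paper: multiply the equation by $xy$, integrate by parts using the boundary conditions, drop the nonnegative term $\int_{0}^{L}xy^{2}(T,x)\,dx$, control $\int_{0}^{T}\!\int_{0}^{L}|y|^{3}$ via the Sobolev embedding and Lemma~\ref{decreasingL2norm}, and absorb via Young's inequality. The paper carries out exactly this computation (with the same constants up to harmless rearrangements) and omits the regularity justification that you spell out; your density argument through $D(A)$ is the standard and correct way to make the formal multiplier identity rigorous.
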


\begin{proof}
Proceeding as in \cite{Rosier1997}, we multiply the first equation
in (\ref{new system}) by $xy$ and integrate over \thinspace$\left(
0,L\right) \times\left(  0,T\right)  $. Then,  by
 Lemma~\ref{decreasingL2norm}, we
obtain%
\begin{align}
&  \int_{0}^{T}\int_{0}^{L}y_{x}^{2}dxdt+\frac{1}{3}\int_{0}^{L}xy^{2}\left(
x,T\right)  dx\nonumber\\
  = &\frac{1}{3}\int_{0}^{T}\int_{0}^{L}y^{2}dxdt+\frac{1}{3}\int_{0}^{L}%
xy_{0}^{2}dx-\frac{2}{3}\int_{0}^{T}\Phi_{\varepsilon}(\left\Vert y\right\Vert
_{L^{2}(0,L)})\int_{0}^{L}xy^{2}y_{x}dxdt\nonumber\\
 \leq & \frac{T+L}{3}\left\Vert y_{0}\right\Vert _{L^{2}(0,L)}^{2}+\frac{2}%
{3}\int_{0}^{T}\Phi_{\varepsilon}(\left\Vert y\right\Vert _{L^{2}%
(0,L)})\left\vert \int_{0}^{L}xy^{2}y_{x}dx\right\vert dt. \label{T-L-y_x^2}%
\end{align}
Since
\begin{equation*}
\int_{0}^{L}xy^{2}y_{x}dx=-\frac{1}{3}\int_{0}^{L}y^{3}dx, \label{xy^2y_x}%
\end{equation*}
it follows from (\ref{T-L-y_x^2}) that
\begin{align*}
&  \int_{0}^{T}\int_{0}^{L}y_{x}^{2}dxdt+\frac{1}{3}\int_{0}^{L}xy^{2}\left(
x,T\right)  dx\\
  \leq & \frac{T+L}{3}\left\Vert y_{0}\right\Vert _{L^{2}(0,L)}^{2}+\frac{2}%
{9}\int_{0}^{T}\Phi_{\varepsilon}(\left\Vert y\right\Vert _{L^{2}(0,L)}%
)\int_{0}^{L}\left\vert y\right\vert ^{3}dxdt\\
  \leq & \frac{T+L}{3}\left\Vert y_{0}\right\Vert _{L^{2}(0,L)}^{2}+\frac{2}%
{9}\int_{0}^{T}\int_{0}^{L}\left\vert y\right\vert ^{3}dxdt.
\end{align*}
Hence,%
\begin{equation}
\left\Vert y\right\Vert _{L^{2}\left(  0,T;H_{0}^{1}\left(  0,L\right)
\right)  }^{2}\leq\frac{4T+L}{3}\left\Vert y_{0}\right\Vert _{L^{2}(0,L)}%
^{2}+\frac{2}{9}\int_{0}^{T}\int_{0}^{L}\left\vert y\right\vert ^{3}dxdt.
\label{y_L^2(0,T,H)^2}%
\end{equation}
 Furthermore, by Lemma~\ref{decreasingL2norm}, the continuous Sobolev
embedding $H_{0}^{1}(0,L)\subset C^{0}([0,L])$, Poincar\'{e}
inequality and H\"{o}lder's inequality, we have
\begin{align*}
\int_{0}^{T}\int_{0}^{L}\left\vert y\right\vert ^{3}dxdt  &  \leq C\int
_{0}^{T}\left\Vert y\right\Vert _{H_{0}^{1}\left(  0,L\right)  }\left(
\int_{0}^{L}\left\vert y\right\vert ^{2}dx\right)  dt\\
&  \leq C\left\Vert y_{0}\right\Vert _{L^{2}(0,L)}^{2}\int_{0}^{T}\left\Vert
y\right\Vert _{H_{0}^{1}\left(  0,L\right)  }dt\\
&  \leq C\left\Vert y_{0}\right\Vert _{L^{2}(0,L)}^{2}\sqrt{T}\left(  \int
_{0}^{T}\left\Vert y\right\Vert _{H_{0}^{1}\left(  0,L\right)  }^{2}dt\right)
^{\frac{1}{2}}\\
&  =C\sqrt{T}\left\Vert y_{0}\right\Vert _{L^{2}(0,L)}^{2}\left\Vert
y\right\Vert _{L^{2}(0,T;H_{0}^{1}\left(  0,L\right)  )}.
\end{align*}
Now, using the above inequality in (\ref{y_L^2(0,T,H)^2}) we have%
\begin{align*}
&  \left\Vert y\right\Vert _{L^{2}\left(  0,T;H_{0}^{1}\left(
0,L\right)
\right)  }^{2}\\
&  \leq\frac{4T+L}{3}\left\Vert y_{0}\right\Vert _{L^{2}(0,L)}^{2}+C%
\sqrt{T}\left\Vert y_{0}\right\Vert _{L^{2}(0,L)}^{2}\left\Vert
y\right\Vert _{L^{2}(0,T;H_{0}^{1}\left(  0,L\right)  )}\\
&  \leq\frac{4T+L}{3}\left\Vert y_{0}\right\Vert
_{L^{2}(0,L)}^{2}+
CT\left\Vert y_{0}\right\Vert _{L^{2}(0,L)}^{4}+\frac{1}%
{2}\left\Vert y\right\Vert _{L^{2}(0,T;H_{0}^{1}\left(  0,L\right)
)}^{2}.
\end{align*}
Therefore, we get%
\[
\left\Vert y\right\Vert _{L^{2}\left(  0,T;H_{0}^{1}\left(
0,L\right)
\right)  }^{2}\leq\frac{8T+2L}{3}\left\Vert y_{0}\right\Vert _{L^{2}(0,L)}%
^{2}+CT\left\Vert y_{0}\right\Vert
_{L^{2}(0,L)}^{4}.
\]
This concludes the proof of Proposition~\ref{proplocal-weel-posed}.
\end{proof}

\begin{remark}
\label{R1}According to Proposition~\ref{proplocal-weel-posed}, we
have, for
every $\tau\in[0,T]$,%
\[
\left\Vert y\right\Vert _{L^{2}\left(  \tau,T;H_{0}^{1}\left(
0,L\right) \right)  }^{2}\leq\frac{8\left(  T-\tau\right)
+2L}{3}\left\Vert y\left( \tau,\cdot\right)  \right\Vert
_{L^{2}(0,L)}^{2}+C\left( T-\tau\right) \left\Vert
y\left(  \tau,\cdot\right)  \right\Vert _{L^{2}(0,L)}^{4}.
\]
It follows that, if $\tau\in\left[  0,T\right]  $ is such that
$\left\Vert y\left(  \tau,\cdot\right)  \right\Vert
_{L^{2}(0,L)}=\varepsilon$, then
\begin{align*}
\left\Vert y\right\Vert _{L^{2}\left(  \tau,T;H_{0}^{1}\left(
0,L\right) \right)  }^{2}  &  \leq\frac{8\left(  T-\tau\right)
+2L}{3}\varepsilon
^{2}+C\left(  T-\tau\right)\varepsilon^{4}\\
&
\leq\frac{8T+2L}{3}\varepsilon^{2}+C T\varepsilon^{4}.
\end{align*}
\end{remark}

\begin{lemma}
\label{y_x-L^2}Let $T>0.$ There exist $\eta>0$ and $C>0,$ such that, for every
$\varepsilon\in\left(  0,1\right]  $ and for every $y_{0}\in L^{2}\left(
0,L\right)  $ with $\left\Vert y_{0}\right\Vert _{L^{2}\left(  0,L\right)
}\leq\eta,$ there exists a unique mild solution $y:\left[  0,T\right]
\times\left[  0,L\right]  \rightarrow\mathbb{R}$ of (\ref{new system}) which satisfies
\[
\left\Vert y\left(  t,\cdot\right)  \right\Vert _{H_{0}^{1}\left(  0,L\right)
}\leq\frac{C}{\sqrt{t}}\left\Vert y_{0}\right\Vert _{L^{2}\left(  0,L\right)
},\quad \forall t\in\left(  0,T\right]  .
\]

\end{lemma}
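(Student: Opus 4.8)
The plan is the following. Existence and uniqueness of a mild solution on all of $[0,T]$ are already in hand: Proposition~\ref{proplocalex} provides a local-in-time mild solution and, since $t\mapsto\|y(t,\cdot)\|_{L^2(0,L)}$ is nonincreasing by Lemma~\ref{decreasingL2norm}, Proposition~\ref{Glocal well-posedness} extends it to $[0,+\infty)$ with no extra smallness. So what really has to be proved is the a~priori estimate $\|y(t,\cdot)\|_{H^1_0(0,L)}\le Ct^{-1/2}\|y_0\|_{L^2(0,L)}$ on $(0,T]$, with $C$ and $\eta$ \emph{independent of} $\varepsilon\in(0,1]$. This uniformity is the delicate point: it forces us to use only $0\le\Phi_\varepsilon\le1$ and never the Lipschitz constant of $\Phi_\varepsilon$ (which blows up like $1/\varepsilon$), so a fresh contraction-mapping argument in a weighted space is not available and we must instead derive an a~priori inequality for the solution that already exists.

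Write $\rho:=\|y_0\|_{L^2(0,L)}$ and start from the integral formulation $y(t)=S(t)y_0+\int_0^t S(t-s)\Phi_\varepsilon(\|y(s,\cdot)\|_{L^2})y(s,\cdot)y_x(s,\cdot)\,ds$. Applying Proposition~\ref{PROP2} to every factor $S(t-s)$ (licit since $t-s\in(0,T]$), using $\Phi_\varepsilon\le1$, H\"older's inequality, the Sobolev embedding $H^1_0(0,L)\hookrightarrow C^0([0,L])$, the Gagliardo--Nirenberg inequality \eqref{15.1*}, and $\|y(s,\cdot)\|_{L^2}\le\rho$ from Lemma~\ref{decreasingL2norm}, one gets
\[
\|y(t,\cdot)\|_{H^1_0}\le\frac{C_0\rho}{\sqrt t}+C_0\int_0^t\frac{\|y(s,\cdot)y_x(s,\cdot)\|_{L^2}}{\sqrt{t-s}}\,ds\le\frac{C_0\rho}{\sqrt t}+C\rho^{1/2}\int_0^t\frac{\|y(s,\cdot)\|_{H^1_0}^{3/2}}{\sqrt{t-s}}\,ds.
\]
Setting $\mu(t):=\sup_{0<s\le t}\sqrt s\,\|y(s,\cdot)\|_{H^1_0}$, bounding $\|y(s,\cdot)\|_{H^1_0}^{3/2}\le\mu(t)^{3/2}s^{-3/4}$ for $s\le t$, and computing the Beta-type integral $\int_0^t(t-s)^{-1/2}s^{-3/4}\,ds=B(\tfrac14,\tfrac12)\,t^{-1/4}$, one obtains, after multiplying by $\sqrt t$, using $t^{1/4}\le T^{1/4}$, and taking the supremum over $t$,
\[
\mu(t)\le C_0\rho+C_1T^{1/4}\rho^{1/2}\mu(t)^{3/2},\qquad 0<t\le T,
\]
where $C_0$ comes from Proposition~\ref{PROP2} and $C_1$ from \eqref{15.1*} together with $B(\tfrac14,\tfrac12)$; neither depends on $\varepsilon$.

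To pass from this to $\mu(T)\le 2C_0\rho$ I would argue by connectedness (continuous induction). First, $\mu$ is finite and continuous on $(0,T]$: by Proposition~\ref{proplocal-weel-posed} one has $y\in L^2(0,T;H^1_0(0,L))$, hence $\|y(\cdot,\cdot)y_x(\cdot,\cdot)\|_{L^2}\le C\rho^{1/2}\|y(\cdot,\cdot)\|_{H^1_0}^{3/2}\in L^{4/3}(0,T)$; feeding this into the convolution inequality above and using Young's inequality for convolutions upgrades the time-integrability of $t\mapsto\|y(t,\cdot)\|_{H^1_0}$ from $L^2(0,T)$ to $L^p(0,T)$ for all $p<4$, and one more iteration gives boundedness of $\sqrt t\,\|y(t,\cdot)\|_{H^1_0}$ on $(0,T]$, i.e. $\mu(T)<\infty$; along the way $y\in C((0,T];H^1_0(0,L))$, so $\mu$ is continuous, and the displayed inequality with $t^{1/4}$ in place of $T^{1/4}$ yields $\limsup_{t\to0^+}\mu(t)\le C_0\rho$. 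Now fix $\eta=\eta(T)>0$ so small that $C_1T^{1/4}(2C_0)^{3/2}\eta<C_0$. For $\rho\le\eta$ the set $\{t\in(0,T]:\mu(t)<2C_0\rho\}$ is nonempty (it contains a right-neighbourhood of $0$), open (by continuity of $\mu$), and closed in $(0,T]$ (if $\mu(t_*)\le 2C_0\rho$, the quadratic-type inequality and the choice of $\eta$ force $\mu(t_*)<2C_0\rho$); since $(0,T]$ is connected, it equals $(0,T]$, so $\mu(T)<2C_0\rho$. Taking $C:=2C_0$ completes the proof, with $C$ and $\eta$ independent of $\varepsilon\in(0,1]$.

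The main obstacle is the nonintegrability of $(t-s)^{-1}$: the Kato-type kernel $(t-s)^{-1/2}$ is not in $L^2(0,T)$, so one cannot close the estimate by a naive Cauchy--Schwarz against the $L^2(0,T;H^1_0)$ bound of Proposition~\ref{proplocal-weel-posed}; the estimate must be carried in the weighted quantity $\mu$, and the finiteness $\mu(T)<\infty$ — which is what makes the quadratic inequality usable — has to be extracted separately, e.g. by the Young-inequality bootstrap just described. A technically lighter alternative is to first establish the bound for $y_0\in H^1_0(0,L)$, where $\mu(T)<\infty$ follows directly from an $H^1_0$ well-posedness estimate (Lemma~\ref{LE2} for the linear part together with the nonlinear estimates), and then extend it to $y_0\in L^2(0,L)$ by density and weak lower semicontinuity of $\|\cdot\|_{H^1_0}$, using that the solution map is continuous from $L^2(0,L)$ into $C([0,T];L^2(0,L))$.
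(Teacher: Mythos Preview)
Your argument is essentially the same as the paper's: both start from the Duhamel formula, apply Proposition~\ref{PROP2} to $S(t)$ and $S(t-s)$, use the Gagliardo--Nirenberg inequality~\eqref{15.1*} together with the $L^2$ decay of Lemma~\ref{decreasingL2norm} to bound $\|yy_x\|_{L^2}\le C\rho^{1/2}\|y\|_{H^1_0}^{3/2}$, and then close on the weighted quantity $\sqrt{t}\,\|y(t,\cdot)\|_{H^1_0}$ via a continuous-induction/first-time-of-failure argument for $\rho$ small. The paper writes the last step as a direct contradiction at the first time $\tau$ where $\xi(\tau)=\overline{C}\rho$, whereas you phrase it as a connectedness argument on $\mu$; these are equivalent. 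Your proof is in fact more careful on one point the paper glosses over: you explicitly justify that $\mu$ is finite and continuous on $(0,T]$ (via the Young-inequality bootstrap or the $H^1_0$-density alternative), which is precisely what is needed to guarantee the existence of the ``first time'' $\tau$ that the paper invokes without comment.
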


\begin{proof}
 From Proposition~\ref{PROP2} and (\ref{integral form}), we deduce that
\begin{align}
\left\Vert y\left(  t,\cdot\right)  \right\Vert _{H_{0}^{1}\left(  0,L\right)
}  &  \leq\left\Vert S\left(  t\right)  y_{0}\right\Vert _{H_{0}^{1}\left(
0,L\right)  }+\int_{0}^{t}\left\Vert S\left(  t-s\right)  \Phi_{\varepsilon
}\left(  \left\Vert y\left(  s,\cdot\right)  \right\Vert _{L^{2}\left(
0,L\right)  }\right)  y\left(  s,\cdot\right)  y_{x}\left(  s,\cdot\right)
\right\Vert _{H_{0}^{1}\left(  0,L\right)  }ds\nonumber\\
&  \leq\frac{C}{\sqrt{t}}\left\Vert y_{0}\right\Vert _{L^{2}\left(
0,L\right)  }+\int_{0}^{t}\frac{C}{\sqrt{t-s}}\left\Vert y\left(
s,\cdot\right)  y_{x}\left(  s,\cdot\right)  \right\Vert
_{L^{2}\left( 0,L\right)  }ds. \label{Lemma3.2-1}
\end{align}
As a consequence of Lemma~\ref{decreasingL2norm} and \eqref{15.1*}, we have
\begin{align}
\left\Vert y\left(  s,\cdot\right)  y_{x}\left(  s,\cdot\right)  \right\Vert
_{L^{2}\left(  0,L\right)  }  &  \leq\left\Vert y\left(  s,\cdot\right)
\right\Vert _{L^{\infty}\left(  0,L\right)  }\left\Vert y_{x}\left(
s,\cdot\right)  \right\Vert _{L^{2}\left(  0,L\right)  }\nonumber\\
&  \leq C \left\Vert y\left(  s,\cdot\right)  \right\Vert _{L^{2}\left(
0,L\right)  }^{\frac{1}{2}}\left\Vert y_{x}\left(  s,\cdot\right)  \right\Vert
_{L^{2}\left(  0,L\right)  }^{\frac{3}{2}}\nonumber\\
&  \leq C \left\Vert y_{0}\right\Vert _{L^{2}\left(  0,L\right)
}^{\frac {1}{2}}\left\Vert y\left(  s,\cdot\right)  \right\Vert
_{H_{0}^{1}\left( 0,L\right)  }^{\frac{3}{2}}.\label{Lemma3.2-2}
\end{align}
Substituting (\ref{Lemma3.2-2}) into (\ref{Lemma3.2-1}), we obtain
\[
\left\Vert y\left(  t,\cdot\right)  \right\Vert _{H_{0}^{1}\left(  0,L\right)
}\leq\frac{C}{\sqrt{t}}\left\Vert y_{0}\right\Vert _{L^{2}\left(  0,L\right)
}+\left\Vert y_{0}\right\Vert _{L^{2}\left(  0,L\right)  }^{\frac{1}{2}}%
\int_{0}^{t}\frac{C}{\sqrt{t-s}}\left\Vert y\left(  s,\cdot\right)
\right\Vert _{H_{0}^{1}\left(  0,L\right)  }^{\frac{3}{2}}ds,
\]
i.e.
\begin{align}
&  \sqrt{t}\left\Vert y\left(  t,\cdot\right)  \right\Vert _{H_{0}^{1}\left(
0,L\right)  }\nonumber\\
 \leq & \  C\left\Vert y_{0}\right\Vert _{L^{2}\left(  0,L\right)
}+\left\Vert y_{0}\right\Vert _{L^{2}\left(  0,L\right)
}^{\frac{1}{2}}\sqrt{t}\int
_{0}^{t}\frac{C}{s^{\frac{3}{4}}\sqrt{t-s}}\left(
\sqrt{s}\left\Vert y\left( s,\cdot\right)  \right\Vert
_{H_{0}^{1}\left(  0,L\right)  }\right)
^{\frac{3}{2}}ds. \label{contradiction}%
\end{align}
\newline Let $\overline{C}>C$. We claim that there exists $\eta>0$ (small
enough) such that, for every $\varepsilon\in\left(  0,1\right]  $
and for every $y_{0}\in L^{2}\left(  0,L\right)  $ such that
$\left\Vert y_{0}\right\Vert _{L^{2}\left(  0,L\right) }\leq\eta,$
we have
\begin{equation}
\xi\left(  t\right)  <\overline{C}\left\Vert y_{0}\right\Vert _{L^{2}\left(
0,L\right)  },\quad \forall t\in\left(  0,T\right]  , \label{claim}%
\end{equation}
where $\xi\left(  t\right)  :=\sqrt{t}\left\Vert y\left(  t,\cdot\right)
\right\Vert _{H_{0}^{1}\left(  0,L\right)  }$. Let us argue by contradiction.
Suppose that (\ref{claim}) is not valid. Then there exists $\tau\in\left(
0,T\right]  $ such that
\begin{equation}
\xi\left(  \tau\right)  = \overline{C}\left\Vert y_{0}\right\Vert
_{L^{2}\left(  0,L\right)  }\text{ and }\xi\left(  t\right)  <\overline
{C}\left\Vert y_{0}\right\Vert _{L^{2}\left(  0,L\right)  },\quad \forall
t\in\left(  0,\tau\right)  . \label{contradiction 1}%
\end{equation}
Thus by (\ref{contradiction}), we have
\begin{align*}
\xi\left(  \tau\right)   &  \leq C\left\Vert y_{0}\right\Vert _{L^{2}\left(
0,L\right)  }+\left\Vert y_{0}\right\Vert _{L^{2}\left(  0,L\right)  }%
^{\frac{1}{2}}\sqrt{\tau}\int_{0}^{\tau}\frac{C}{s^{\frac{3}{4}}\sqrt{\tau-s}%
}\left(  \overline{C}\left\Vert y_{0}\right\Vert _{L^{2}\left(  0,L\right)
}\right)  ^{\frac{3}{2}}ds\\
&  =C\left\Vert y_{0}\right\Vert _{L^{2}\left(  0,L\right)  }+\left\Vert
y_{0}\right\Vert _{L^{2}\left(  0,L\right)  }^{2}\sqrt{\tau}C\left(
\overline{C}\right)  ^{\frac{3}{2}}\int_{0}^{\tau}\frac{1}{s^{\frac{3}{4}%
}\sqrt{\tau-s}}ds\\
&  =\left\Vert y_{0}\right\Vert _{L^{2}\left(  0,L\right)  }\left(
C+\left\Vert y_{0}\right\Vert _{L^{2}\left(  0,L\right)  }\sqrt{\tau}C\left(
\overline{C}\right)  ^{\frac{3}{2}}\int_{0}^{\tau}\frac{1}{s^{\frac{3}{4}%
}\sqrt{\tau-s}}ds\right)  .
\end{align*}
It can be readily checked that if $\left\Vert y_{0}\right\Vert _{L^{2}\left(
0,L\right)  }$ is small enough, we get $\xi\left(  \tau\right)  <\overline
{C}\left\Vert y_{0}\right\Vert _{L^{2}\left(  0,L\right)  },$ which leads to a
contradiction with (\ref{contradiction 1}). This concludes the proof of
Lemma~\ref{y_x-L^2}.
\end{proof}

\subsection{\bigskip Properties of the semigroup generated by
(\ref{new system})}

Let
\[
\mathcal{S}(t):L^{2}\left(  0,L\right)  \rightarrow L^{2}\left(  0,L\right)
,\quad t\geq0
\]
be the semigroup on $L^{2}\left(  0,L\right)  $ defined by
\[
\mathcal{S}(t)(y_{0}):=y\left(  t,x\right)  ,
\]
where $y\left(  t,x\right)  $ is the unique solution of (\ref{new
system}) with respect to the initial value $y_{0}\in L^{2}\left(
0,L\right) $. Let $T>0$. Then, for every $t\in\left[ 0,T\right]  $,
$\mathcal{S}(t)$ can be decomposed as
\begin{equation*}
\mathcal{S}(t)=S(t)+R(t),
\end{equation*}
or equivalently,
\[
y(t,x)=z\left(  t,x\right)  +\alpha\left(  t,x\right)  ,
\]
where, as above, for every $y_{0}\in L^{2}\left(  0,L\right)  $, $z\left(
t,\cdot\right)  :=S\left(  t\right)  y_{0}$ is the unique solution of
\[
\left\{
\begin{array}
[c]{l}%
z_{t}+z_{x}+z_{xxx}=0,\\
z\left(  t,0\right)  =z\left(  t,L\right)=0,\\
z_{x}\left(  t,L\right)  =0\\
z\left(  0,x\right)  =y_{0}%
\end{array}
\right.
\]
and $\alpha\left(  t,\cdot\right)  :=R(t)y_{0}$ is the unique solution of
\[
\left\{
\begin{array}
[c]{l}%
\alpha_{t}+\alpha_{x}+\alpha_{xxx}+\Phi_{\varepsilon}\left(  \left\Vert
z+\alpha\right\Vert _{L^{2}\left(  0,L\right)  }\right)  \left(  z_{x}%
\alpha+\alpha_{x}z+z_{x}z+\alpha_{x}\alpha\right)  =0,\\
\alpha\left(  t,0\right)  =\alpha\left(  t,L\right)=0,\\
\alpha_{x}\left(
t,L\right)  =0,\\
\alpha\left(  0,x\right)  =0.
\end{array}
\right.
\]
Let%

\[
M:=\left\{  \alpha\varphi:\alpha\in\mathbb{R}\right\}  ,
\]
where%
\begin{equation}
\varphi\left(  x\right)  =\frac{1}{\sqrt{3\pi}}\left(  1-\cos x\right)  .
\label{phi}%
\end{equation}
Let us recall that, by Lemma~\ref{Lm3}, $\varphi\left(  x\right)  $ is an
eigenfunction of the linear operator $A$ for the linearized system
(\ref{linearized}) corresponding to the eigenvalue $0$ and $M$ is the
eigenspace corresponding to this eigenvalue. Then we can do the following
decomposition of $X=L^{2}\left(  0,L\right)  $:
\[
X=M\oplus M^{\bot}.
\]
The projection $P:X\rightarrow M$ is given by
\[
Py(t,x)=p(t)\varphi(x),
\]
where
\begin{equation}
p(t):=\int_{0}^{L}y(t,x)\varphi(x)dx, \label{projection-on-M}%
\end{equation}
and the projection $Q:X\rightarrow M^{\bot}$ is given by $I-P.$

It is clear that $S(t)$ leaves $M$ and $M^{\bot}$ invariant and
$S(t)$ commutes with $P$ and $Q.$ Denote by $S_{1}(t):M\rightarrow
M$ and $S_{2}(t):M^{\bot}\rightarrow M^{\bot}$ the restriction of
$S(t)$ on $M$ and $M^{\bot}$ respectively. Then $S_{1}(t)=Id.$
Moreover, by  Corollary \ref{Corollary on spectrum}, there exist
$N\geq1$ and $\omega>0$ such that $\left\Vert S_{2}(t)\right\Vert
\leq Ne^{-\omega t}, \forall t\geq0.$

\subsubsection{\bigskip Global Lipschitzianity of the map $R\left(  t\right)  :L^{2}\left(
0,L\right)  \rightarrow
L^{2}\left(  0,L\right) $}

The aim of this part is to prove and estimate the global
Lipschitzianity of the map $R\left(  t\right)  :L^{2}\left(
0,L\right)  \rightarrow
L^{2}\left(  0,L\right)  $. To that end, we consider%
\[
\left\{
\begin{array}
[c]{l}%
\alpha_{t}+\alpha_{x}+\alpha_{xxx}+\Phi_{\varepsilon}\left(  \left\Vert
\alpha+z\right\Vert _{L^{2}\left(  0,L\right)  }\right)  \left(  z_{x}%
\alpha+\alpha_{x}z+z_{x}z+\alpha_{x}\alpha\right)  =0,\\
\alpha\left(  t,0\right)  =\alpha\left(  t,L\right)=0,\\
\alpha_{x}\left(
t,L\right)  =0,\\
\alpha\left(  0,x\right)  =0,
\end{array}
\right.
\]
and%
\[
\left\{
\begin{array}
[c]{l}%
\overline{\alpha}_{t}+\overline{\alpha}_{x}+\overline{\alpha}_{xxx}%
+\Phi_{\varepsilon}\left(  \left\Vert \overline{z}+\overline{\alpha
}\right\Vert _{L^{2}\left(  0,L\right)  }\right)  \left(  \overline{z}%
_{x}\overline{\alpha}+\overline{\alpha}_{x}\overline{z}+\overline{z}%
_{x}\overline{z}+\overline{\alpha}_{x}\overline{\alpha}\right)  =0,\\
\overline{\alpha}\left(  t,0\right)  =\overline{\alpha}\left(  t,L\right)=0,\\
\overline{\alpha}_{x}\left(  t,L\right)  =0,\\
\overline{\alpha}\left(  0,x\right)  =0,
\end{array}
\right.
\]
where $z$ is the solution of
\[
\left\{
\begin{array}
[c]{l}%
z_{t}+z_{x}+z_{xxx}=0,\\
z\left(  t,0\right)  =z\left(  t,L\right)  =0,\\
z_{x}\left(  t,L\right)  =0,\\
z\left(  0,x\right)  =y_{0}\in L^{2}\left(  0,L\right)  ,
\end{array}
\right.
\]
and $\overline{z}$ is the solution of
\[
\left\{
\begin{array}
[c]{l}%
\overline{z}_{t}+\overline{z}_{x}+\overline{z}_{xxx}=0,\\
\overline{z}\left(  t,0\right)  =\overline{z}\left(  t,L\right)  =0,\\
\overline
{z}_{x}\left(  t,L\right)  =0,\\
z\left(  0,x\right)  =\overline{y}_{0}\in L^{2}\left(  0,L\right)  .
\end{array}
\right.
\]
Set
\begin{align*}
\Delta & :=\alpha-\overline{\alpha},\quad
y:=\alpha+z,\quad\overline{y}:=\overline
{z}+\overline{\alpha},\\
\Phi_{1}  &  :=\Phi_{\varepsilon}\left(  \left\Vert y\right\Vert
_{L^{2}\left(  0,L\right)  }\right)
,\quad\Phi_{2}:=\Phi_{\varepsilon}\left( \left\Vert
\overline{y}\right\Vert _{L^{2}\left(  0,L\right)  }\right)  .
\end{align*}
Then we obtain%
\begin{equation}
\left\{
\begin{array}
[c]{l}%
\Delta_{t}+\Delta_{x}+\Delta_{xxx}=-\Phi_{1}yy_{x}+\Phi_{2}\overline
{y}\overline{y}_{x}=\Phi_{1}\left[  -\left(  \alpha+z\right)  \Delta
_{x}-\left(  \overline{\alpha}_{x}+z_{x}\right)  \Delta-\overline{\alpha
}\left(  z-\overline{z}\right)  _{x}\right. \\
 \quad \quad\quad\quad\quad\quad\quad \quad\,\left. -\overline{\alpha}_{x}\left( z-\overline{z}\right)
-z_{x}z+\overline{z}_{x}\overline{z}\right] -\left(  \Phi_{1}-\Phi
_{2}\right)  \left(  \overline{z}_{x}\overline{\alpha}+\overline{\alpha}%
_{x}\overline{z}+\overline{z}_{x}\overline{z}+\overline{\alpha}_{x}%
\overline{\alpha}\right)  ,\\
\Delta\left(  t,0\right)  =\Delta\left(  t,L\right) =0,\\
\Delta_{x}\left(
t,L\right)  =0,\\
\Delta\left(  0,x\right)  =0.
\end{array}
\right.  \label{lambda1}%
\end{equation}
Moreover, by the definition of $\Phi_{1},\,\Phi_{2}$ and
(\ref{Phi=0}), we get
\begin{equation}
\Phi_{1}=\Phi_{2}=0,\text{ \ \ \ }\forall\left\Vert y\right\Vert
_{L^{2}\left(  0,L\right)  }\geq\varepsilon,\, \forall \left\Vert \overline{y}\right\Vert _{L^{2}\left(
0,L\right)  }\geq\varepsilon. \label{Phi1=Phi2=0}%
\end{equation}
We first give the following estimate of the $L^{2}$-norm of
$\Delta$.

\begin{lemma}
\label{delta bounded}Let $T>0.$ Then there exists $C>0$ such that
\[
\left\Vert \Delta(t,\cdot)\right\Vert _{L^{2}\left(  0,L\right)  }\leq C,\,
\forall t\in\left[  0,T\right]  ,\, \forall\varepsilon\in\left(  0,1\right]
,\, \forall y_{0}\in L^{2}(0,L), \, \forall\overline{y}_{0} \in L^{2}(0,L) .
\]

\end{lemma}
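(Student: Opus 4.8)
The plan is to deduce the lemma from a uniform bound on the ``$R$-part'' of the semigroup itself:
\[
\left\Vert R(t)y_{0}\right\Vert _{L^{2}(0,L)}\leq 2\varepsilon ,\quad \forall t\geq 0,\ \forall \varepsilon\in(0,1],\ \forall y_{0}\in L^{2}(0,L).
\]
Granting this, since $\Delta(t,\cdot)=\alpha(t,\cdot)-\overline{\alpha}(t,\cdot)=R(t)y_{0}-R(t)\overline{y}_{0}$ and $\varepsilon\leq 1$, the triangle inequality immediately gives $\left\Vert \Delta(t,\cdot)\right\Vert _{L^{2}(0,L)}\leq 4$ for all $t\in[0,T]$, so the assertion holds with $C=4$ (in fact uniformly in $T$). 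For the proof of the displayed bound one uses that $R(t)y_{0}=\mathcal{S}(t)y_{0}-S(t)y_{0}=y(t,\cdot)-z(t,\cdot)$, where $y(t,\cdot)=\mathcal{S}(t)y_{0}$ solves \eqref{new system} and $z(t,\cdot)=S(t)y_{0}$ solves \eqref{linearized} with the same initial datum, together with only two soft facts: the contraction property of $S(t)$ (Lemma~\ref{LE3}) and the monotonicity of $t\mapsto\left\Vert y(t,\cdot)\right\Vert _{L^{2}(0,L)}$ (Lemma~\ref{decreasingL2norm}).

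To obtain the bound I would split into two cases according to the size of $\left\Vert y_{0}\right\Vert _{L^{2}(0,L)}$. If $\left\Vert y_{0}\right\Vert _{L^{2}(0,L)}<\varepsilon$, then both $\left\Vert y(t,\cdot)\right\Vert _{L^{2}(0,L)}$ and $\left\Vert z(t,\cdot)\right\Vert _{L^{2}(0,L)}$ remain $<\varepsilon$ for all $t\geq 0$, whence $\left\Vert \alpha(t,\cdot)\right\Vert _{L^{2}(0,L)}<2\varepsilon$. If $\left\Vert y_{0}\right\Vert _{L^{2}(0,L)}\geq\varepsilon$, introduce the stopping time $t_{*}:=\inf\{t\geq 0:\left\Vert y(t,\cdot)\right\Vert _{L^{2}(0,L)}<\varepsilon\}\in[0,+\infty]$. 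On $[0,t_{*})$ one has $\left\Vert y(t,\cdot)\right\Vert _{L^{2}(0,L)}\geq\varepsilon$, so by \eqref{Phi=0} the cut-off factor $\Phi_{\varepsilon}(\left\Vert y(t,\cdot)\right\Vert _{L^{2}(0,L)})$ vanishes identically there; hence on this interval the nonlinear term in \eqref{new system} is identically zero, $y$ solves the linear equation \eqref{linearized} with datum $y_{0}$, and therefore $y(t,\cdot)=S(t)y_{0}=z(t,\cdot)$, i.e.\ $\alpha(t,\cdot)=0$ on $[0,t_{*})$. If $t_{*}=+\infty$ this already gives the bound; otherwise continuity of $t\mapsto y(t,\cdot)$ yields $\alpha(t_{*},\cdot)=0$, hence $z(t_{*},\cdot)=y(t_{*},\cdot)$, while continuity and monotonicity of $\left\Vert y(t,\cdot)\right\Vert _{L^{2}(0,L)}$ together with the definition of $t_{*}$ force $\left\Vert y(t_{*},\cdot)\right\Vert _{L^{2}(0,L)}=\varepsilon$. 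Finally, for $t\geq t_{*}$, the semigroup property and Lemma~\ref{LE3} give $\left\Vert z(t,\cdot)\right\Vert _{L^{2}(0,L)}=\left\Vert S(t-t_{*})z(t_{*},\cdot)\right\Vert _{L^{2}(0,L)}\leq\left\Vert y(t_{*},\cdot)\right\Vert _{L^{2}(0,L)}=\varepsilon$, while Lemma~\ref{decreasingL2norm} gives $\left\Vert y(t,\cdot)\right\Vert _{L^{2}(0,L)}\leq\varepsilon$, so $\left\Vert \alpha(t,\cdot)\right\Vert _{L^{2}(0,L)}\leq 2\varepsilon$ also on $[t_{*},+\infty)$.

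I do not expect a genuine obstacle in this argument; the only delicate points are purely bookkeeping around $t_{*}$: that the nonlinear term is identically (not merely approximately) zero on $[0,t_{*})$, so $y$ and $z$ truly coincide there and $\alpha$ vanishes up to and including $t_{*}$, and that continuity plus monotonicity pin $\left\Vert y(t_{*},\cdot)\right\Vert _{L^{2}(0,L)}$ to exactly $\varepsilon$, which is what propagates $\left\Vert z(t,\cdot)\right\Vert _{L^{2}(0,L)}\leq\varepsilon$ forward in time. A more computational alternative is available — multiply \eqref{lambda1} by $\Delta$, integrate over $(0,L)$, integrate by parts (using the boundary conditions, in particular $\int_{0}^{L}\Delta_{x}\Delta^{2}\,dx=0$), estimate the right-hand side via the support property \eqref{Phi1=Phi2=0}, the $H_{0}^{1}$ a priori bounds of Lemma~\ref{y_x-L^2}, the Gagliardo--Nirenberg inequality, and Gr\"{o}nwall's lemma — but it is heavier and yields a constant depending on $T$, so the case analysis above is preferable.
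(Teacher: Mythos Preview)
Your argument is correct and in fact cleaner than the paper's. The paper takes precisely the ``computational alternative'' you sketch at the end: it multiplies the $\Delta$-equation by $\Delta$, integrates by parts to obtain
\[
\frac{d}{dt}\int_{0}^{L}\Delta^{2}\,dx\leq C\bigl(\Phi_{1}\Vert y_{x}\Vert_{L^{2}}^{2}+\Phi_{2}\Vert\overline{y}_{x}\Vert_{L^{2}}^{2}\bigr)\Vert\Delta\Vert_{L^{2}},
\]
invokes a Gronwall-type lemma (Lemma~17 of \cite{Coron-Crepeau}), and then uses Remark~\ref{R1} to bound the time integrals of $\Phi_{1}\Vert y_{x}\Vert^{2}$ and $\Phi_{2}\Vert\overline{y}_{x}\Vert^{2}$ by quantities of size $O(\varepsilon^{2})$, yielding a $T$-dependent constant. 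Your stopping-time approach sidesteps all of this: the key observation that $\Phi_{\varepsilon}(\Vert y\Vert)=0$ forces $y$ to coincide with the linear flow $z$ until the first time $\Vert y\Vert$ drops to $\varepsilon$ makes the bound $\Vert\alpha\Vert\leq 2\varepsilon$ immediate, and gives a constant independent of $T$. The paper's route has the minor advantage that its bound is $O(\varepsilon^{2})$ rather than $O(\varepsilon)$, but this extra smallness is never used downstream; your argument is shorter and uses only Lemmas~\ref{LE3} and~\ref{decreasingL2norm}. The bookkeeping at $t_{*}$ that you flag is handled exactly as you describe: the Duhamel formula for the mild solution shows $y(t)=S(t)y_{0}$ on $[0,t_{*})$, continuity of $\alpha$ extends $\alpha=0$ to $t_{*}$, and monotonicity plus continuity of $\Vert y(t,\cdot)\Vert$ pin the value at $t_{*}$ to exactly $\varepsilon$.
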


\begin{proof}
By integrating by parts in
\[
\int_{0}^{L}\Delta\left(  \Delta_{t}+\Delta_{x}+\Delta_{xxx}+\Phi_{1}%
yy_{x}-\Phi_{2}\overline{y}\overline{y}_{x}\right)  dx=0,
\]
we get%
\begin{equation}
\frac{1}{2}\frac{d}{dt}\int_{0}^{L}\Delta^{2}dx+\frac{1}{2}\Delta_{x}%
^{2}\left(  t,0\right)  =-\Phi_{1}\int_{0}^{L}\Delta yy_{x}dx+\Phi_{2}\int
_{0}^{L}\Delta\overline{y}\overline{y}_{x}dx. \label{gamma}%
\end{equation}
Note that $\Delta(t,0)=\Delta(t,L)=0$, by the continuous Sobolev embedding $H_{0}%
^{1}\left(  0,L\right)  \subset C^{0}\left(  \left[  0,L\right]  \right)  $
and Poincar\'{e} inequality, we obtain%
\begin{align*}
\left\vert \int_{0}^{L}\Delta\overline{y}\overline{y}_{x}dx\right\vert  &
\leq\left\Vert \overline{y}\right\Vert _{L^{\infty}\left(  0,L\right)  }%
\int_{0}^{L}\left\vert \Delta\overline{y}_{x}\right\vert dx\\
&  \leq C\left\Vert \overline{y}\right\Vert _{H_{0}^{1}\left(  0,L\right)
}\int_{0}^{L}\left\vert \Delta\overline{y}_{x}\right\vert dx\\
&  \leq C\left\Vert \overline{y}_{x}\right\Vert _{L^{2}\left(  0,L\right)
}\int_{0}^{L}\left\vert \Delta\overline{y}_{x}\right\vert dx.
\end{align*}
In the above inequalities and in the following, $C$, unless otherwise
specified, denotes various positive constants which may vary from line to line
but are independent of $t\in[0,T]$, $\varepsilon\in(0,1]$, $y_{0}\in
L^{2}(0,L)$ and $\overline{y}_{0}\in L^{2}(0,L)$. Thus,%
\[
\left\vert \int_{0}^{L}\Delta\overline{y}\overline{y}_{x}dx\right\vert \leq
C\left\Vert \overline{y}_{x}\right\Vert _{L^{2}\left(  0,L\right)  }%
^{2}\left\Vert \Delta\right\Vert _{L^{2}\left(  0,L\right)  }.
\]
Similarly, we have%
\[
\left\vert \int_{0}^{L}\Delta yy_{x}dx\right\vert \leq C\left\Vert
y_{x}\right\Vert _{L^{2}\left(  0,L\right)  }^{2}\left\Vert \Delta\right\Vert
_{L^{2}\left(  0,L\right)  }.
\]
Hence, it follows from (\ref{gamma}) that
\[
\frac{d}{dt}\int_{0}^{L}\Delta^{2}dx+\Delta_{x}^{2}\left(  t,0\right)  \leq
C\left(  \Phi_{1}\left\Vert y_{x}\right\Vert _{L^{2}\left(  0,L\right)  }%
^{2}+\Phi_{2}\left\Vert \overline{y}_{x}\right\Vert _{L^{2}\left(  0,L\right)
}^{2}\right)  \left\Vert \Delta\right\Vert _{L^{2}\left(  0,L\right)  }.
\]
In particular,%
\[
\frac{d}{dt}\int_{0}^{L}\Delta^{2}dx\leq C\left(  \Phi_{1}\left\Vert
y_{x}\right\Vert _{L^{2}\left(  0,L\right)  }^{2}+\Phi_{2}\left\Vert
\overline{y}_{x}\right\Vert _{L^{2}\left(  0,L\right)  }^{2}\right)
\left\Vert \Delta\right\Vert _{L^{2}\left(  0,L\right)  }.
\]
By Lemma 17 in \cite{Coron-Crepeau} and  Remark \ref{R1}, we get%
\begin{align*}
\int_{0}^{L}\Delta^{2}dx  &  \leq3\left(  \int_{0}^{t}C\left(  \Phi
_{1}\left\Vert y_{x}\right\Vert _{L^{2}\left(  0,L\right)
}^{2}+\Phi
_{2}\left\Vert \overline{y}_{x}\right\Vert _{L^{2}\left(  0,L\right)  }%
^{2}\right)  dt\right)  ^{2}\\
&  \leq3C^{2}\left(  2\left(  \frac{8T+2L}{3}\varepsilon^{2}+C%
T\varepsilon^{4}\right)  \right)  ^{2},\forall t\in\left[
0,T\right].
\end{align*} The result follows$.$
\end{proof}

 For the sake of simplicity, we denote from now on by $L^2(L^2)$ the norm $L^2(0,T;L^2(0,L))$.
\begin{lemma}
\label{estimate of delta}Let $T>0.$ Then there exists $C>0$ such that
\begin{align*}
&  \|\Delta(t,\cdot)\|_{L^2(0,L)}\nonumber\\
\leqslant
&\ \int_0^T\Big[\Phi_1\left(  \left\Vert\overline
{\alpha}_{x}\right\Vert _{L^{2}\left(  0,L\right)  }+\left\Vert z_{x}%
\right\Vert _{L^{2}\left(  0,L\right)  }+\left\Vert \overline{z}%
_{x}\right\Vert _{L^{2}\left(  0,L\right)  }\right)\left\Vert \left(
z-\overline{z}\right)  _{x}\right\Vert _{L^{2}\left(  0,L\right)  }\nonumber\\
&\qquad +\left\vert
\Phi_{1}-\Phi_{2}\right\vert \left\Vert \overline{z}+\overline{\alpha
}\right\Vert _{L^{2}(0,L)}^{\frac{1}{2}}\left\Vert \left(  \overline{z}+\overline{\alpha
}\right)  _{x}\right\Vert _{L^{2}\left(  0,L\right)  }^{\frac{3}{2}}\Big]dt\nonumber\\
&\times \exp\left[  C\left(  1+\left\Vert \sqrt{\Phi_{1}}\alpha
_{x}\right\Vert _{L^{2}(L^{2})}^{2}+\left\Vert \sqrt{\Phi_{1}}\overline
{\alpha}_{x}\right\Vert _{L^{2}(L^{2})}^{2}+\left\Vert \sqrt{\Phi_{1}}%
z_{x}\right\Vert _{L^{2}(L^{2})}^{2}\right)  \right],
\end{align*}
for every $t\in\left[  0,T\right]  $, for every $\varepsilon\in(0,1]$, for
every $y_{0}\in L^{2}(0,L)$ and for every $\overline{y}_{0}\in L^{2}(0,L)$.
\end{lemma}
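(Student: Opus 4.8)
The plan is to derive the estimate in Lemma \ref{estimate of delta} by performing an $L^2$ energy estimate on the equation \eqref{lambda1} for $\Delta$, keeping careful track of all the terms so that the final Gr\"onwall argument produces exactly the claimed integrand and exponential factor. First I would multiply the first equation in \eqref{lambda1} by $\Delta$ and integrate over $(0,L)$; integrating by parts and using the boundary conditions $\Delta(t,0)=\Delta(t,L)=\Delta_x(t,L)=0$ kills the $\Delta\Delta_x$ and $\Delta\Delta_{xxx}$ contributions (up to the good term $\tfrac12\Delta_x^2(t,0)\geq 0$, which can be dropped), leaving
\[
\frac{1}{2}\frac{d}{dt}\int_0^L\Delta^2\,dx \leq \Phi_1\int_0^L\Delta\big[-(\alpha+z)\Delta_x-(\overline{\alpha}_x+z_x)\Delta-\overline{\alpha}(z-\overline{z})_x-\overline{\alpha}_x(z-\overline{z})-z_xz+\overline{z}_x\overline{z}\big]\,dx
\]
\[
-(\Phi_1-\Phi_2)\int_0^L\Delta\big(\overline{z}_x\overline{\alpha}+\overline{\alpha}_x\overline{z}+\overline{z}_x\overline{z}+\overline{\alpha}_x\overline{\alpha}\big)\,dx.
\]

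Next I would split the right-hand side into two kinds of terms: those proportional to $\|\Delta\|_{L^2}^2$ (with a time-dependent coefficient), which will feed the exponential Gr\"onwall factor, and those proportional to $\|\Delta\|_{L^2}$ (linear in $\Delta$), which will become the integrand. The term $-\Phi_1\int_0^L(\alpha+z)\Delta\Delta_x\,dx$ is handled by writing $\Delta\Delta_x=\tfrac12(\Delta^2)_x$, integrating by parts, and then using the Sobolev/Gagliardo--Nirenberg estimate \eqref{15.1*} together with Poincar\'e's inequality and Young's inequality to bound it by $C\Phi_1(\|\alpha_x\|_{L^2}^2+\|z_x\|_{L^2}^2)\|\Delta\|_{L^2}^2$ plus a small multiple of $\|\Delta_x\|_{L^2}^2$ — but I must be careful, since there is no $\|\Delta_x\|_{L^2}^2$ good term available after dropping $\tfrac12\Delta_x^2(t,0)$; the cleaner route is to integrate by parts so that no $\Delta_x$ survives and one only sees $\|\Delta\|_{L^2}^2$ against $\|(\alpha+z)_x\|_{L^\infty}$-type quantities controlled via \eqref{15.1*}. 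The term $-\Phi_1\int_0^L(\overline{\alpha}_x+z_x)\Delta^2\,dx$ is directly of the form (coefficient)$\times\|\Delta\|_{L^2}^2$ after bounding $\|\Delta\|_{L^\infty}^2\leq C\|\Delta\|_{L^2}\|\Delta_x\|_{L^2}$ — again one must avoid producing an uncontrolled $\|\Delta_x\|_{L^2}$, so instead I would keep $\int_0^L|\overline{\alpha}_x+z_x|\Delta^2\,dx\leq\|\overline{\alpha}_x+z_x\|_{L^2}\|\Delta\|_{L^4}^2$ and use $\|\Delta\|_{L^4}^2\leq C\|\Delta\|_{L^2}^{3/2}\|\Delta_x\|_{L^2}^{1/2}$ only together with Lemma \ref{delta bounded} which gives an a priori $L^\infty_t L^2_x$ bound on $\Delta$; the $\sqrt{\Phi_1}\,\alpha_x$, $\sqrt{\Phi_1}\,\overline{\alpha}_x$, $\sqrt{\Phi_1}\,z_x$ appearing in the exponent of the claimed estimate strongly suggest that the right bookkeeping yields coefficients $C(\Phi_1\|\alpha_x\|_{L^2}^2+\Phi_1\|\overline{\alpha}_x\|_{L^2}^2+\Phi_1\|z_x\|_{L^2}^2+1)$ multiplying $\|\Delta\|_{L^2}^2$. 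The remaining source terms $-\Phi_1\overline{\alpha}(z-\overline{z})_x-\Phi_1\overline{\alpha}_x(z-\overline{z})$ and $-(\Phi_1-\Phi_2)(\cdots)$ are linear in $\Delta$: pulling out $\|\Delta\|_{L^2}$ by Cauchy--Schwarz and bounding the other factor by $L^\infty$--$L^2$ H\"older plus \eqref{15.1*} (and noting $-z_xz+\overline{z}_x\overline{z}$ can be rewritten in terms of $(z-\overline{z})_x$ and $z-\overline{z}$, which vanish when $y_0=\overline{y}_0$) produces precisely the bracketed integrand $\Phi_1(\|\overline{\alpha}_x\|_{L^2}+\|z_x\|_{L^2}+\|\overline{z}_x\|_{L^2})\|(z-\overline{z})_x\|_{L^2}+|\Phi_1-\Phi_2|\|\overline{z}+\overline{\alpha}\|_{L^2}^{1/2}\|(\overline{z}+\overline{\alpha})_x\|_{L^2}^{3/2}$.

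Having reached a differential inequality of the schematic form $\tfrac{d}{dt}\|\Delta\|_{L^2}^2\leq a(t)\|\Delta\|_{L^2}^2+b(t)\|\Delta\|_{L^2}$ with $a(t)=C(1+\Phi_1\|\alpha_x\|_{L^2}^2+\Phi_1\|\overline{\alpha}_x\|_{L^2}^2+\Phi_1\|z_x\|_{L^2}^2)$ and $b(t)$ the bracketed integrand, I would conclude by a standard Gr\"onwall argument for the quantity $\|\Delta\|_{L^2}$ itself: from $\tfrac{d}{dt}\|\Delta\|_{L^2}\leq\tfrac12 a(t)\|\Delta\|_{L^2}+\tfrac12 b(t)$ and $\Delta(0,\cdot)=0$ one obtains $\|\Delta(t,\cdot)\|_{L^2}\leq\tfrac12\int_0^t b(s)\,ds\cdot\exp(\tfrac12\int_0^t a(s)\,ds)$, and absorbing constants and extending the $s$-integrals to $[0,T]$ (legitimate since all integrands are nonnegative and $t\leq T$) gives exactly the stated inequality, with $\int_0^T\Phi_1\|\alpha_x\|_{L^2}^2\,dt=\|\sqrt{\Phi_1}\,\alpha_x\|_{L^2(L^2)}^2$ and similarly for the other two exponent terms.

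The main obstacle I anticipate is the careful treatment of the quadratic-in-$\Delta$ terms without having a coercive $\|\Delta_x\|_{L^2}^2$ dissipation term: the only "good" term from the energy identity is the boundary term $\tfrac12\Delta_x^2(t,0)$, which does not control the full $H^1_0$ norm. The resolution is to use the a priori bound from Lemma \ref{delta bounded} on $\|\Delta\|_{L^\infty_tL^2_x}$ so that whenever a factor of $\|\Delta\|_{L^\infty(0,L)}$ or $\|\Delta\|_{L^4}$ is unavoidable it gets estimated as $C\|\Delta\|_{L^2}^{1/2}\|\Delta_x\|_{L^2}^{1/2}$ — but crucially, to integrate by parts first (moving the derivative off $\Delta$ onto the smooth coefficients $\alpha,z,\overline{\alpha},\overline{z}$ wherever possible) so that the surviving $\Delta$-factors all carry at most the $L^2$ norm, and the coefficient functions carry the $H^1_0$ (hence $L^\infty$) norms that appear in $a(t)$ and $b(t)$. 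Verifying that every term of \eqref{lambda1} can be routed through this scheme, and that the cut-off structure \eqref{Phi1=Phi2=0} ensures $\Phi_1=\Phi_2=0$ outside the range where Lemma \ref{y_x-L^2} and Remark \ref{R1} give the needed $L^2(0,T;H^1_0)$ bounds, is the delicate part of the bookkeeping.
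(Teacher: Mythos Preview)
Your proposal correctly identifies the structure of the argument (Gr\"onwall on a differential inequality $\tfrac{d}{dt}\|\Delta\|_{L^2}^2\le a(t)\|\Delta\|_{L^2}^2+b(t)\|\Delta\|_{L^2}$) and the form of $a(t)$ and $b(t)$, but the obstacle you flag is real and your proposed resolution does not close the gap. After integrating by parts in $-\Phi_1\int_0^L(\alpha+z)\Delta\Delta_x\,dx=\tfrac{\Phi_1}{2}\int_0^L(\alpha_x+z_x)\Delta^2\,dx$, you need to bound $\int_0^L|\alpha_x+z_x|\Delta^2\,dx$. Pulling out $\|(\alpha+z)_x\|_{L^\infty}$ is not available: \eqref{15.1*} applied to $u=(\alpha+z)_x$ would require $\alpha+z\in H^2$, which you do not have (only $L^2_tH^1_0$). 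The alternative $\|(\alpha+z)_x\|_{L^2}\|\Delta\|_{L^4}^2\le C\|(\alpha+z)_x\|_{L^2}\|\Delta\|_{L^2}^{3/2}\|\Delta_x\|_{L^2}^{1/2}$ reintroduces $\|\Delta_x\|_{L^2}$, and Lemma \ref{delta bounded} gives no control on this quantity. In short, the plain multiplier $\Delta$ does not supply enough dissipation.

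The paper overcomes this with a two-step multiplier argument. First one multiplies \eqref{lambda1} by $2x\Delta$: the weighted identity produces a genuine coercive term $3\int_0^L\Delta_x^2\,dx$ on the left (cf.\ the Kato smoothing computation), so that small multiples of $\|\Delta_x\|_{L^2}^2$ arising from Young's inequality on the cubic terms can be absorbed. A Gr\"onwall-type lemma (Lemma 17 of \cite{Coron-Crepeau}) applied to $\int_0^L x\Delta^2\,dx$ yields the bound $W$, and integrating the weighted inequality over $[0,T]$ then gives an a priori bound on $\int_0^T\int_0^L\Delta_x^2\,dx\,dt$ in terms of $W$ and the source integral. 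Only after this does the paper multiply by $\Delta$: now the dangerous terms of the form $\tfrac32\int_0^L\Delta_x^2\,dx$ on the right are harmless because their time integral is already controlled, and a second application of the Gr\"onwall lemma on $\|\Delta\|_{L^2}^2$ produces the stated estimate. The weighted multiplier $x\Delta$ is the missing ingredient in your plan.
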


\begin{proof}
We multiply the first equation of (\ref{lambda1}) by
$2x\Delta$ and then integrate over $[0,L]$. By integrating by parts
and using the boundary conditions of
\eqref{lambda1}, we get%
\begin{align}
&  \frac{d}{dt}\int_{0}^{L}x\Delta^{2}dx+3\int_{0}^{L}\Delta_{x}%
^{2}dx\nonumber\\
 = &\int_{0}^{L}\Delta^{2}dx+\Phi_{1}\times\left(
-2\int_{0}^{L}x\alpha
\Delta\Delta_{x}dx+4\int_{0}^{L}x\overline{\alpha}\Delta\Delta_{x}dx+2\int
_{0}^{L}xz\Delta\Delta_{x}dx\right. \nonumber\\
&  +2\int_{0}^{L}\overline{\alpha}\Delta^{2}dx+2\int_{0}^{L}z\Delta
^{2}dx-2\int_{0}^{L}x\Delta\overline{\alpha}\left(  z-\overline{z}\right)
_{x}dx\label{2xlambda}\\
&  \left.  -2\int_{0}^{L}x\Delta\overline{\alpha}_{x}\left(  z-\overline
{z}\right)  dx-2\int_{0}^{L}x\Delta z_{x}\left(  z-\overline{z}\right)
dx-2\int_{0}^{L}x\Delta\overline{z}\left(  z-\overline{z}\right)
_{x}dx\right) \nonumber\\
&  -\left(  \Phi_{1}-\Phi_{2}\right)  \int_{0}^{L}2x\Delta\left(  \overline
{z}_{x}\overline{\alpha}+\overline{\alpha}_{x}\overline{z}+\overline{z}%
_{x}\overline{z}+\overline{\alpha}_{x}\overline{\alpha}\right)  dx.\nonumber
\end{align}
Note that $\alpha\left(  t,0\right)  =\alpha\left(  t,L\right)  =0$,
by the continuous Sobolev embedding $H_{0}^{1}\left(  0,L\right)
\subset C^{0}([0,L])$ and
Poincar\'{e} inequality$,$ there exists $C=C(L)>0$ such that%
\[
2\left\vert \int_{0}^{L}x\alpha\Delta\Delta_{x}dx\right\vert \leq C\left\Vert
\alpha_{x}\right\Vert _{L^{2}\left(  0,L\right)  }\int_{0}^{L}\left\vert
x\Delta\Delta_{x}\right\vert dx.
\]
Thus,%
\begin{align}
2\left\vert \int_{0}^{L}x\alpha\Delta\Delta_{x}dx\right\vert  &  \leq
C\left\Vert \alpha_{x}\right\Vert _{L^{2}\left(  0,L\right)  }\left\Vert
\Delta_{x}\right\Vert _{L^{2}\left(  0,L\right)  }\left\Vert x\Delta
\right\Vert _{L^{2}\left(  0,L\right)  }\nonumber\\
&  \leq\frac{1}{2}\left\Vert \Delta_{x}\right\Vert _{L^{2}\left(  0,L\right)
}^{2}+\frac{1}{2}\left(  C\left\Vert \alpha_{x}\right\Vert _{L^{2}\left(
0,L\right)  }\left\Vert x\Delta\right\Vert _{L^{2}\left(  0,L\right)
}\right)  ^{2}\nonumber\\
&  \leq\frac{1}{2}\int_{0}^{L}\Delta_{x}^{2}dx+C\left\Vert \alpha
_{x}\right\Vert _{L^{2}\left(  0,L\right)  }^{2}\int_{0}^{L}x\Delta^{2}dx.
\label{2xlambda1}%
\end{align}
Similarly,%
\begin{equation}
4\left\vert \int_{0}^{L}x\overline{\alpha}\Delta\Delta_{x}dx\right\vert
\leq\frac{1}{2}\int_{0}^{L}\Delta_{x}^{2}dx+C\left\Vert \overline{\alpha}%
_{x}\right\Vert _{L^{2}\left(  0,L\right)  }^{2}\int_{0}^{L}x\Delta^{2}dx,
\label{2xlambda2}%
\end{equation}%
\begin{equation}
2\left\vert \int_{0}^{L}xz\Delta\Delta_{x}dx\right\vert \leq\frac{1}{2}%
\int_{0}^{L}\Delta_{x}^{2}dx+C\left\Vert z_{x}\right\Vert _{L^{2}\left(
0,L\right)  }^{2}\int_{0}^{L}x\Delta^{2}dx. \label{2xlambda3}%
\end{equation}
Note that $\overline{\alpha}\left(  t,0\right)
=\overline{\alpha}\left( t,L\right) =0$, by the continuous Sobolev
embedding $H_{0}^{1}\left( 0,L\right)  \subset
C^{0}([0,L])$ and Poincar\'{e} inequality, we have%
\begin{equation}
2\left\vert \int_{0}^{L}\overline{\alpha}\Delta^{2}dx\right\vert \leq
C\left\Vert \overline{\alpha}_{x}\right\Vert _{L^{2}\left(  0,L\right)  }%
\int_{0}^{L}\Delta^{2}dx. \label{ineqbaralphaDelta2}%
\end{equation}
 From \eqref{ineqbaralphaDelta2} and Lemma~16 in \cite{Coron-Crepeau} with
$a:=\min\left\{  \displaystyle\frac{1}{\sqrt{2}}C^{-\frac{1}{2}}\left\Vert \overline
{\alpha}_{x}\right\Vert _{L^{2}\left(  0,L\right)  }^{-\frac{1}{2}},L\right\}
,$ there exists $C=C(L)>0$ such that%
\begin{equation}
2\left\vert \int_{0}^{L}\overline{\alpha}\Delta^{2}dx\right\vert \leq\frac
{1}{4}\int_{0}^{L}\Delta_{x}^{2}dx+C\left(  \left\Vert \overline{\alpha}%
_{x}\right\Vert _{L^{2}\left(  0,L\right)  }^{\frac{3}{2}}+\left\Vert
\overline{\alpha}_{x}\right\Vert _{L^{2}\left(  0,L\right)  }\right)  \int
_{0}^{L}x\Delta^{2}dx. \label{2xlambda4}%
\end{equation}
Similarly, we have%
\begin{equation}
2\left\vert \int_{0}^{L}z\Delta^{2}dx\right\vert \leq\frac{1}{4}\int_{0}%
^{L}\Delta_{x}^{2}dx+C\left(  \left\Vert z_{x}\right\Vert _{L^{2}\left(
0,L\right)  }^{\frac{3}{2}}+\left\Vert z_{x}\right\Vert _{L^{2}\left(
0,L\right)  }\right)  \int_{0}^{L}x\Delta^{2}dx. \label{2xlambda5}%
\end{equation}
By Lemma~16 in \cite{Coron-Crepeau}, there exists $C=C(L)>0$ such
that
\begin{equation}
\int_{0}^{L}\Delta^{2}dx\leq\frac{1}{2}\int_{0}^{L}\Delta_{x}^{2}dx+C\int
_{0}^{L}x\Delta^{2}dx. \label{2xlambda6}%
\end{equation}
We have%
\begin{align}
2\left\vert \int_{0}^{L}x\Delta\overline{\alpha}\left(  z-\overline{z}\right)
_{x}dx\right\vert  &  \leq C\left\Vert \overline{\alpha}_{x}\right\Vert
_{L^{2}\left(  0,L\right)  }\left\vert \int_{0}^{L}x\Delta\left(
z-\overline{z}\right)  _{x}dx\right\vert \nonumber\\
&  \leq C\left\Vert \overline{\alpha}_{x}\right\Vert _{L^{2}\left(
0,L\right)  }\left(  \int_{0}^{L}x^{2}\Delta^{2}dx\right)  ^{\frac{1}{2}%
}\left\Vert \left(  z-\overline{z}\right)  _{x}\right\Vert _{L^{2}\left(
0,L\right)  }\nonumber\\
&  \leq C\left\Vert \overline{\alpha}_{x}\right\Vert _{L^{2}\left(
0,L\right)  }\left(  \int_{0}^{L}x\Delta^{2}dx\right)  ^{\frac{1}{2}%
}\left\Vert \left(  z-\overline{z}\right)  _{x}\right\Vert _{L^{2}\left(
0,L\right)  }. \label{Sobolev embedding and Holder inequality}%
\end{align}
Similarly, we can obtain%
\begin{equation}
2\left\vert \int_{0}^{L}x\Delta\overline{\alpha}_{x}\left(  z-\overline
{z}\right)  dx\right\vert \leq C\left\Vert \left(  z-\overline{z}\right)
_{x}\right\Vert _{L^{2}\left(  0,L\right)  }\left(  \int_{0}^{L}x\Delta
^{2}dx\right)  ^{\frac{1}{2}}\left\Vert \overline{\alpha}_{x}\right\Vert
_{L^{2}\left(  0,L\right)  }, \label{2xlambda7}%
\end{equation}%
\begin{equation}
2\left\vert \int_{0}^{L}x\Delta z_{x}\left(  z-\overline{z}\right)
dx\right\vert \leq C\left\Vert \left(  z-\overline{z}\right)  _{x}\right\Vert
_{L^{2}\left(  0,L\right)  }\left(  \int_{0}^{L}x\Delta^{2}dx\right)
^{\frac{1}{2}}\left\Vert z_{x}\right\Vert _{L^{2}\left(  0,L\right)  },
\label{2xlambda8}%
\end{equation}
\begin{equation}
2\left\vert \int_{0}^{L}x\Delta\overline{z}\left(  z-\overline{z}\right)
_{x}dx\right\vert \leq C\left\Vert \overline{z}_{x}\right\Vert _{L^{2}\left(
0,L\right)  }\left(  \int_{0}^{L}x\Delta^{2}dx\right)  ^{\frac{1}{2}%
}\left\Vert \left(  z-\overline{z}\right)  _{x}\right\Vert _{L^{2}\left(
0,L\right)  }. \label{2xlambda9}%
\end{equation}
Moreover, we have%
\begin{align}
&  \left\vert \int_{0}^{L}2x\Delta\left(  \overline{z}_{x}\overline{\alpha
}+\overline{\alpha}_{x}\overline{z}+\overline{z}_{x}\overline{z}%
+\overline{\alpha}_{x}\overline{\alpha}\right)  dx\right\vert \nonumber\\
=& 2\left\vert \int_{0}^{L}x\Delta\left(  \overline{z}_{x}+\overline{\alpha
}_{x}\right)  \left(  \overline{z}+\overline{\alpha}\right)  dx\right\vert \nonumber\\
\leq & 2\left\Vert \overline{z}+\overline{\alpha}\right\Vert
_{L^{\infty }(0,L)}\int_{0}^{L}\left\vert x\Delta\left(
\overline{z}+\overline{\alpha
}\right)  _{x}\right\vert dx\nonumber\\
\leq &  2\sqrt{L}\left\Vert
\overline{z}+\overline{\alpha}\right\Vert
_{L^{\infty}(0,L)}\left\Vert \left(
\overline{z}+\overline{\alpha}\right) _{x}\right\Vert _{L^{2}\left(
0,L\right)  }\left(  \int_{0}^{L}x\Delta ^{2}dx\right)
^{\frac{1}{2}} \label{+1}.
\end{align}
Then, using the Gagliardo-Nirenberg inequality \eqref{15.1*}, it
follows from (\ref{+1}) that
\begin{align}
&  \left\vert \int_{0}^{L}2x\Delta\left(  \overline{z}_{x}\overline{\alpha
}+\overline{\alpha}_{x}\overline{z}+\overline{z}_{x}\overline{z}%
+\overline{\alpha}_{x}\overline{\alpha}\right)  dx\right\vert \nonumber\\
\leq & C\left\Vert \overline{z}+\overline{\alpha}\right\Vert _{L^{2}%
(0,L)}^{\frac{1}{2}}\left\Vert \left(  \overline{z}+\overline{\alpha}\right)
_{x}\right\Vert _{L^{2}\left(  0,L\right)  }^{\frac{3}{2}}\left(  \int_{0}%
^{L}x\Delta^{2}dx\right)  ^{\frac{1}{2}}. \label{2xlambda10}%
\end{align}
Thus, using (\ref{2xlambda}) to (\ref{2xlambda10}), we get
\begin{align}
&  \frac{d}{dt}\int_{0}^{L}x\Delta^{2}dx+\frac{1}{2}\int_{0}^{L}\Delta_{x}%
^{2}dx\nonumber\\
\leq & C\left(  1+\Phi_{1}\left(  \left\Vert \alpha_{x}\right\Vert
_{L^{2}\left(  0,L\right)  }^{2}+\left\Vert \overline{\alpha}_{x}\right\Vert
_{L^{2}\left(  0,L\right)  }^{2}+\left\Vert z_{x}\right\Vert _{L^{2}\left(
0,L\right)  }^{2}\right)  \right)  \int_{0}^{L}x\Delta_{{}}^{2}dx\nonumber\\
&  +C\left[  \Phi_{1}\left(  \left\Vert \overline{\alpha}_{x}\right\Vert
_{L^{2}\left(  0,L\right)  }+\left\Vert z_{x}\right\Vert _{L^{2}\left(
0,L\right)  }+\left\Vert \overline{z}_{x}\right\Vert _{L^{2}\left(
0,L\right)  }\right)  \left\Vert \left(  z-\overline{z}\right)  _{x}%
\right\Vert _{L^{2}\left(  0,L\right)  }\right. \nonumber\\
&  \left.  +\left\vert \Phi_{1}-\Phi_{2}\right\vert \left\Vert
\overline{z}+\overline{\alpha}\right\Vert _{L^{2}(0,L)}^{\frac{1}{2}%
}\left\Vert \left(  \overline{z}+\overline{\alpha}\right)  _{x}\right\Vert
_{L^{2}\left(  0,L\right)  }^{\frac{3}{2}}\right]  \left(  \int_{0}^{L}%
x\Delta^{2}dx\right)^{\frac{1}{2}}. \label{step1}%
\end{align}
In particular,
\begin{align*}
&  \frac{d}{dt}\int_{0}^{L}x\Delta^{2}dx\\
\leq & C\left(  1+\Phi_{1}\left(  \left\Vert \alpha_{x}\right\Vert
_{L^{2}\left(  0,L\right)  }^{2}+\left\Vert \overline{\alpha}_{x}\right\Vert
_{L^{2}\left(  0,L\right)  }^{2}+\left\Vert z_{x}\right\Vert _{L^{2}\left(
0,L\right)  }^{2}\right)  \right)  \int_{0}^{L}x\Delta_{{}}^{2}dx\\
&  +C\left[  \Phi_{1}\left(  \left\Vert \overline{\alpha}_{x}\right\Vert
_{L^{2}\left(  0,L\right)  }+\left\Vert z_{x}\right\Vert _{L^{2}\left(
0,L\right)  }+\left\Vert \overline{z}_{x}\right\Vert _{L^{2}\left(
0,L\right)  }\right)  \left\Vert \left(  z-\overline{z}\right)  _{x}%
\right\Vert _{L^{2}\left(  0,L\right)  }\right. \\
& \left.  +\left\vert \Phi_{1}-\Phi_{2}\right\vert \left\Vert
\overline{z}+\overline{\alpha}\right\Vert _{L^{2}(0,L)}^{\frac{1}{2}%
}\left\Vert \left(  \overline{z}+\overline{\alpha}\right)  _{x}\right\Vert
_{L^{2}\left(  0,L\right)  }^{\frac{3}{2}}\right]  \left(  \int_{0}^{L}%
x\Delta^{2}dx\right)  ^{\frac{1}{2}}.
\end{align*}
Then, by Lemma 17 in \cite{Coron-Crepeau}, we get
\begin{equation}
\int_{0}^{L}x\Delta^{2}dx\leq W \label{W1}, \quad \forall t\in\left[
0,T\right]  
\end{equation}
with
\begin{align}
W:=& \ 3 C^2 \Big[\int_0^T\Big(\left(  \left\Vert \Phi_1\overline
{\alpha}_{x}\right\Vert _{L^{2}\left(  0,L\right)  }+\left\Vert\Phi_1 z_{x}%
\right\Vert _{L^{2}\left(  0,L\right)  }+\left\Vert \Phi_1\overline{z}%
_{x}\right\Vert _{L^{2}\left(  0,L\right)  }\right)\left\Vert \left(
z-\overline{z}\right)  _{x}\right\Vert _{L^{2}\left(  0,L\right)  }\nonumber\\
&\qquad  +\left\vert
\Phi_{1}-\Phi_{2}\right\vert \left\Vert \overline{z}+\overline{\alpha
}\right\Vert _{L^{2}(0,L)}^{\frac{1}{2}}\left\Vert \left(  \overline{z}+\overline{\alpha
}\right)  _{x}\right\Vert _{L^{2}\left(  0,L\right)  }^{\frac{3}{2}}\Big)dt\Big]^2\nonumber\\
&\times \exp\left[  C\left(  T+\left\Vert \sqrt{\Phi_{1}}\alpha
_{x}\right\Vert _{L^{2}(L^{2})}^{2}+\left\Vert \sqrt{\Phi_{1}}\overline
{\alpha}_{x}\right\Vert _{L^{2}(L^{2})}^{2}+\left\Vert \sqrt{\Phi_{1}}%
z_{x}\right\Vert _{L^{2}(L^{2})}^{2}\right)  \right].
\label{W}%
\end{align}
Now integrating (\ref{step1}) over $[0,T]$ and using (\ref{W1}), we have%
\begin{align*}
&  \int_{0}^{L}x\Delta^{2}\left(  T,x\right)  dx+\frac{1}{2}\int_{0}^{T}%
\int_{0}^{L}\Delta_{x}^{2}dxdt\\
\leq & \  C\int_{0}^{T}\left(  1+\Phi_{1}\left(  \left\Vert \alpha
_{x}\right\Vert _{L^{2}\left(  0,L\right)  }^{2}+\left\Vert \overline{\alpha
}_{x}\right\Vert _{L^{2}\left(  0,L\right)  }^{2}+\left\Vert z_{x}\right\Vert
_{L^{2}\left(  0,L\right)  }^{2}\right)  \right)  dtW\\
& +C\int_{0}^{T}\left[  \Phi_{1}\left(  \left\Vert \overline
{\alpha}_{x}\right\Vert _{L^{2}\left(  0,L\right)  }+\left\Vert z_{x}%
\right\Vert _{L^{2}\left(  0,L\right)  }+\left\Vert \overline{z}%
_{x}\right\Vert _{L^{2}\left(  0,L\right)  }\right)  \left\Vert \left(
z-\overline{z}\right)  _{x}\right\Vert _{L^{2}\left(  0,L\right)  }\right. \\
& \left.  +\left\vert \Phi_{1}-\Phi_{2}\right\vert \left\Vert
\overline{z}+\overline{\alpha}\right\Vert _{L^{2}(0,L)}^{\frac{1}{2}%
}\left\Vert \left(  \overline{z}+\overline{\alpha}\right)  _{x}\right\Vert
_{L^{2}\left(  0,L\right)  }^{\frac{3}{2}}\right]  dtW^{\frac{1}{2}}.
\end{align*}
Then it follows that%
\begin{align}
&  \frac{1}{2}\int_{0}^{T}\int_{0}^{L}\Delta_{x}^{2}dxdt\\
\leqslant
&\ C\int_0^T \Big(1+\Phi_1\left(\|\alpha_x\|^2_{L^2(0,L)}+\|\overline\alpha_x\|^2_{L^2(0,L)}+\|z_x\|^2_{L^2(0,L)}\right)\Big)\, dt W+\frac{1}{2}W
\nonumber\\
&+\frac{1}{2}\Big[C\int_0^T\Big(\Phi_1\left(  \left\Vert \overline
{\alpha}_{x}\right\Vert _{L^{2}\left(  0,L\right)  }+\left\Vert z_{x}%
\right\Vert _{L^{2}\left(  0,L\right)  }+\left\Vert \overline{z}%
_{x}\right\Vert _{L^{2}\left(  0,L\right)  }\right)\left\Vert \left(
z-\overline{z}\right)  _{x}\right\Vert _{L^{2}\left(  0,L\right)  }\nonumber\\
&\qquad +\left\vert \Phi_{1}-\Phi_{2}\right\vert \left\Vert
\overline{z}+\overline{\alpha }\right\Vert
_{L^{2}(0,L)}^{\frac{1}{2}}\left\Vert \left(
\overline{z}+\overline{\alpha }\right)  _{x}\right\Vert
_{L^{2}\left(  0,L\right)  }^{\frac{3}{2}}\Big)dt\Big]^2. \label{+2}
\end{align}
Hence,  combining (\ref{+2}) with (\ref{W}), we obtain
\begin{align}
&  \int_{0}^{T}\int_{0}^{L}\Delta_{x}^{2}dxdt\nonumber\\
\leqslant
&\ C\Big[\int_0^T\Big(\left(  \left\Vert \Phi_1\overline
{\alpha}_{x}\right\Vert _{L^{2}\left(  0,L\right)  }+\left\Vert\Phi_1 z_{x}%
\right\Vert _{L^{2}\left(  0,L\right)  }+\left\Vert \Phi_1\overline{z}%
_{x}\right\Vert _{L^{2}\left(  0,L\right)  }\right)\left\Vert \left(
z-\overline{z}\right)  _{x}\right\Vert _{L^{2}\left(  0,L\right)  }\nonumber\\
&\qquad +\left\vert
\Phi_{1}-\Phi_{2}\right\vert \left\Vert \overline{z}+\overline{\alpha
}\right\Vert _{L^{2}(0,L)}^{\frac{1}{2}}\left\Vert \left(  \overline{z}+\overline{\alpha
}\right)  _{x}\right\Vert _{L^{2}\left(  0,L\right)  }^{\frac{3}{2}}\Big)dt\Big]^2\nonumber\\
&\times \exp\left[  C\left(  T+\left\Vert \sqrt{\Phi_{1}}\alpha
_{x}\right\Vert _{L^{2}(L^{2})}^{2}+\left\Vert \sqrt{\Phi_{1}}\overline
{\alpha}_{x}\right\Vert _{L^{2}(L^{2})}^{2}+\left\Vert \sqrt{\Phi_{1}}%
z_{x}\right\Vert _{L^{2}(L^{2})}^{2}\right)  \right].
\label{L^2-lambda_x^2}%
\end{align}
We multiply the first equation of (\ref{lambda1}) by $\Delta$ and integrate
over $[0,L]$. Using the boundary conditions of (\ref{lambda1}) and
integrations by parts, we get%
\begin{align}
&  \frac{1}{2}\frac{d}{dt}\int_{0}^{L}\Delta^{2}dx+\frac{1}{2}\Delta_{x}%
^{2}\left(  t,0\right) \nonumber\\
 &= \Phi_{1}\times\left(  -\int_{0}^{L}\alpha\Delta_{x}\Delta dx+2\int_{0}%
^{L}\overline{\alpha}\Delta_{x}\Delta dx+\int_{0}^{L}z\Delta_{x}\Delta
dx\right. \nonumber\\
&\quad - \int_{0}^{L}\overline{\alpha}\left(  z-\overline{z}\right)
_{x}\Delta dx-\int_{0}^{L}\overline{\alpha}_{x}\left(
z-\overline{z}\right)
\Delta dx\nonumber\\
& \quad \left.  -\int_{0}^{L}z_{x}\left(  z-\overline{z}\right)
\Delta dx-\int_{0}^{L}\overline{z}\left(  z-\overline{z}\right)
_{x}\Delta
dx\right) \label{time lambda}\\
& \quad  -\left(  \Phi_{1}-\Phi_{2}\right)  \int_{0}^{L}\Delta\left(
\overline
{z}_{x}\overline{\alpha}+\overline{\alpha}_{x}\overline{z}+\overline{z}%
_{x}\overline{z}+\overline{\alpha}_{x}\overline{\alpha}\right)  dx.\nonumber
\end{align}
It can be readily checked that%
\begin{align}
\left\vert \int_{0}^{L}\alpha\Delta_{x}\Delta dx\right\vert  &  \leq\frac
{1}{2}\int_{0}^{L}\Delta_{x}^{2}dx+\frac{1}{2}\int_{0}^{L}\Delta^{2}\alpha
^{2}dx\nonumber\\
&  \leq\frac{1}{2}\int_{0}^{L}\Delta_{x}^{2}dx+C\left\Vert \alpha
_{x}\right\Vert _{L^{2}(0,L)}^{2}\int_{0}^{L}\Delta^{2}dx.
\label{time lambda1}%
\end{align}
Similarly, we have%
\begin{equation}
\left\vert 2\int_{0}^{L}\overline{\alpha}\Delta_{x}\Delta dx\right\vert
\leq\frac{1}{2}\int_{0}^{L}\Delta_{x}^{2}dx+C\left\Vert \overline{\alpha}%
_{x}\right\Vert _{L^{2}(0,L)}^{2}\int_{0}^{L}\Delta^{2}dx,
\label{time lambda2}%
\end{equation}
and%
\begin{equation}
\left\vert \int_{0}^{L}z\Delta_{x}\Delta dx\right\vert \leq\frac{1}{2}\int
_{0}^{L}\Delta_{x}^{2}dx+C\left\Vert z_{x}\right\Vert _{L^{2}(0,L)}^{2}%
\int_{0}^{L}\Delta^{2}dx. \label{time lambda3}%
\end{equation}
Similarly to (\ref{Sobolev embedding and Holder inequality}), we get the
following inequalities%
\begin{align}
\left\vert \int_{0}^{L}\overline{\alpha}\left(  z-\overline{z}\right)
_{x}\Delta dx\right\vert  &  \leq C\left\Vert \overline{\alpha}_{x}\right\Vert
_{L^{2}\left(  0,L\right)  }\left\Vert \left(  z-\overline{z}\right)
_{x}\right\Vert _{L^{2}\left(  0,L\right)  }\left(  \int_{0}^{L}\Delta
^{2}dx\right)  ^{\frac{1}{2}},\label{time lambda4}\\
\left\vert \int_{0}^{L}\overline{\alpha}_{x}\left(  z-\overline{z}\right)
\Delta dx\right\vert  &  \leq C\left\Vert \left(  z-\overline{z}\right)
_{x}\right\Vert _{L^{2}\left(  0,L\right)  }\left\Vert \overline{\alpha}%
_{x}\right\Vert _{L^{2}\left(  0,L\right)  }\left(  \int_{0}^{L}\Delta
^{2}dx\right)  ^{\frac{1}{2}}, \label{time lambda5}\\
\left\vert \int_{0}^{L}z_{x}\left(  z-\overline{z}\right)  \Delta
dx\right\vert  &  \leq C\left\Vert \left(  z-\overline{z}\right)
_{x}\right\Vert _{L^{2}\left(  0,L\right)  }\left\Vert z_{x}\right\Vert
_{L^{2}\left(  0,L\right)  }\left(  \int_{0}^{L}\Delta^{2}dx\right)
^{\frac{1}{2}},\label{time lambda6}\\
\left\vert \int_{0}^{L}\overline{z}\left(  z-\overline{z}\right)  _{x}\Delta
dx\right\vert  &  \leq C\left\Vert \overline{z}_{x}\right\Vert _{L^{2}\left(
0,L\right)  }\left\Vert \left(  z-\overline{z}\right)  _{x}\right\Vert
_{L^{2}\left(  0,L\right)  }\left(  \int_{0}^{L}\Delta^{2}dx\right)
^{\frac{1}{2}}. \label{time lambda7}%
\end{align}
Moreover, for the last term on the right-hand side of (\ref{time lambda}),
using the same argument as for (\ref{2xlambda10}), we have%
\begin{align}
&  \left\vert \int_{0}^{L}\Delta\left(  \overline{z}_{x}\overline{\alpha
}+\overline{\alpha}_{x}\overline{z}+\overline{z}_{x}\overline{z}%
+\overline{\alpha}_{x}\overline{\alpha}\right)  dx\right\vert \nonumber\\
&  \leq C\left\Vert \overline{z}+\overline{\alpha}\right\Vert _{L^{2}%
(0,L)}^{\frac{1}{2}}\left\Vert \left(  \overline{z}+\overline{\alpha}\right)
_{x}\right\Vert _{L^{2}\left(  0,L\right)  }^{\frac{3}{2}}\left(  \int_{0}%
^{L}\Delta^{2}dx\right)  ^{\frac{1}{2}}. \label{time lambda8}%
\end{align}
Hence, by (\ref{time lambda}) to (\ref{time lambda8}), we deduce that%
\begin{align*}
&  \frac{1}{2}\frac{d}{dt}\int_{0}^{L}\Delta^{2}dx\\
\leq &  \ \frac{3}{2}\int_{0}^{L}\Delta_{x}^{2}dx+C\Phi_{1}\left(  \left\Vert
\alpha_{x}\right\Vert _{L^{2}\left(  0,L\right)  }^{2}+\left\Vert
\overline{\alpha}_{x}\right\Vert _{L^{2}\left(  0,L\right)  }^{2}+\left\Vert
z_{x}\right\Vert _{L^{2}\left(  0,L\right)  }^{2}\right)  \int_{0}^{L}%
\Delta^{2}dx\\
&  +C\left[   \Phi_{1}\left(  2\left\Vert \overline{\alpha
}_{x}\right\Vert _{L^{2}\left(  0,L\right)  }+\left\Vert z_{x}\right\Vert
_{L^{2}\left(  0,L\right)  }+\left\Vert \overline{z}_{x}\right\Vert
_{L^{2}\left(  0,L\right)  }\right)  \left\Vert \left(  z-\overline{z}\right)
_{x}\right\Vert _{L^{2}\left(  0,L\right)  }  \right. \\
&  \left.  +\left\vert \Phi_{1}-\Phi_{2}\right\vert \left\Vert
\overline{z}+\overline{\alpha}\right\Vert _{L^{2}(0,L)}^{\frac{1}{2}%
}\left\Vert \left(  \overline{z}+\overline{\alpha}\right)  _{x}\right\Vert
_{L^{2}\left(  0,L\right)  }^{\frac{3}{2}}\right]  \left(  \int_{0}^{L}%
\Delta^{2}dx\right)  ^{\frac{1}{2}}.
\end{align*}
Therefore, by (\ref{L^2-lambda_x^2}) and Lemma 17 in \cite{Coron-Crepeau}, we
get that, for every $t\in\left[  0,T\right]  ,$%
\begin{align*}
&\|\Delta(t,\cdot)\|_{L^2(0,L)}\\
\leqslant
&\ \Big[\int_0^T\Big(\Phi_1\left(  \left\Vert \overline
{\alpha}_{x}\right\Vert _{L^{2}\left(  0,L\right)  }+\left\Vert z_{x}%
\right\Vert _{L^{2}\left(  0,L\right)  }+\left\Vert \overline{z}%
_{x}\right\Vert _{L^{2}\left(  0,L\right)  }\right)\left\Vert \left(
z-\overline{z}\right)  _{x}\right\Vert _{L^{2}\left(  0,L\right)  }\\
&\qquad +\left\vert
\Phi_{1}-\Phi_{2}\right\vert \left\Vert \overline{z}+\overline{\alpha
}\right\Vert _{L^{2}(0,L)}^{\frac{1}{2}}\left\Vert \left(  \overline{z}+\overline{\alpha
}\right)  _{x}\right\Vert _{L^{2}\left(  0,L\right)  }^{\frac{3}{2}}\Big)dt\Big]^2\\
&\times \exp\left[  C\left(  1+\left\Vert \sqrt{\Phi_{1}}\alpha
_{x}\right\Vert _{L^{2}(L^{2})}^{2}+\left\Vert \sqrt{\Phi_{1}}\overline
{\alpha}_{x}\right\Vert _{L^{2}(L^{2})}^{2}+\left\Vert \sqrt{\Phi_{1}}%
z_{x}\right\Vert _{L^{2}(L^{2})}^{2}\right)  \right].
\end{align*}
This completes the proof of Lemma \ref{estimate of delta}.
\end{proof}

Now we are in a position to prove the following proposition on the global
Lipschitzianity of the map $R(t)$. With our notation, we have%
\[
R(t)y_{0}-R(t)\overline{y}_{0}=\alpha\left(  t,\cdot\right)  -\overline
{\alpha}\left(  t,\cdot\right)  =\Delta.
\]

\begin{proposition}
\label{global Lipschitz}Let $T>0$. There exists $\varepsilon_{0}\in(0,1]$ and
$\tilde C :(0,\varepsilon_{0}]\rightarrow(0,+\infty)$ such that
\begin{gather}
\label{estRliptendvers0}\left\Vert \Delta\right\Vert _{L^{2}\left(
0,L\right)  }\leq\tilde C\left(  \varepsilon\right)  \left\Vert y_{0}%
-\overline{y}_{0}\right\Vert _{L^{2}\left(  0,L\right)
},\quad \forall\,\overline {y}_{0},y_{0}\in L^{2}\left(  0,L\right)
,\forall t\in\left[  0,T\right]  ,
\forall\varepsilon\in(0,\varepsilon_{0}] ,\\
\tilde C \left(  \varepsilon\right)  \rightarrow0\text{ as }\varepsilon
\rightarrow0^{+}.
\end{gather}

\end{proposition}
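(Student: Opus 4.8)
The plan is to combine the estimate of Lemma~\ref{estimate of delta} with the structural fact, coming from Lemma~\ref{decreasingL2norm} and the cut-off property \eqref{Phi=0}, that the modified nonlinearity is switched off on the set where the $L^2$-norm is $\ge\varepsilon$, while on the complementary set the trajectory is already $O(\varepsilon)$ in the relevant space-time norms; it is this smallness that will force $\tilde C(\varepsilon)\to 0$.

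Concretely, for $y_0\in L^2(0,L)$ I would write $y:=\mathcal S(\cdot)y_0$ and put $\tau_0:=\sup\{t\in[0,T]:\|y(t,\cdot)\|_{L^2(0,L)}\ge\varepsilon\}$, with $\tau_0:=0$ when that set is empty, and define $\overline\tau_0$ the same way from $\overline y_0$. Since $\Delta=R(t)y_0-R(t)\overline y_0$ and all the quoted estimates remain valid after exchanging the two data, I may assume $\overline\tau_0\le\tau_0$. By Lemma~\ref{decreasingL2norm} the function $t\mapsto\|y(t,\cdot)\|_{L^2}$ is continuous and nonincreasing, so $\|y(t,\cdot)\|_{L^2}\ge\varepsilon$ on $[0,\tau_0]$ and $<\varepsilon$ on $(\tau_0,T]$; by \eqref{Phi=0} this gives $\Phi_1\equiv 0$ on $[0,\tau_0]$, hence $\alpha\equiv 0$ there, $z=y$ on $[0,\tau_0]$ and $\|z(\tau_0,\cdot)\|_{L^2}=\|y(\tau_0,\cdot)\|_{L^2}\le\varepsilon$; likewise for the barred quantities. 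Then, using the semigroup property together with Lemma~\ref{LE1} and with Remark~\ref{R1} restarted at $\tau_0$ (resp.\ at $\overline\tau_0$, or Proposition~\ref{proplocal-weel-posed} itself when $\overline\tau_0=0$), I would obtain, $C$ denoting constants independent of $\varepsilon\in(0,1]$ and of $y_0,\overline y_0$,
\[
\|z\|_{L^2(\tau_0,T;H^1_0)}+\|y\|_{L^2(\tau_0,T;H^1_0)}+\|\overline z\|_{L^2(\overline\tau_0,T;H^1_0)}+\|\overline y\|_{L^2(\overline\tau_0,T;H^1_0)}\le C\varepsilon ,
\]
whence also $\|\alpha_x\|_{L^2(0,T;L^2)}=\|\alpha_x\|_{L^2(\tau_0,T;L^2)}\le C\varepsilon$ and $\|\overline\alpha_x\|_{L^2(0,T;L^2)}\le C\varepsilon$. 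Moreover $z-\overline z=S(\cdot)(y_0-\overline y_0)$, so Lemma~\ref{LE1} gives $\|(z-\overline z)_x\|_{L^2(0,T;L^2)}\le C\|y_0-\overline y_0\|_{L^2}$; and since $0\le\Phi_1\le1$, the three quantities $\|\sqrt{\Phi_1}\alpha_x\|_{L^2(L^2)}^2$, $\|\sqrt{\Phi_1}\overline\alpha_x\|_{L^2(L^2)}^2$, $\|\sqrt{\Phi_1}z_x\|_{L^2(L^2)}^2$ are all $\le C\varepsilon^2\le C$, so the exponential factor in Lemma~\ref{estimate of delta} is bounded by a fixed constant.

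It remains to control the two parts of the integral in Lemma~\ref{estimate of delta}. For the first, $\Phi_1\le 1$ and $\Phi_1(t)=0$ for $t\le\tau_0$, and since $\overline\tau_0\le\tau_0$ one has $(\tau_0,T]\subset(\overline\tau_0,T]$, so $\|\overline\alpha_x\|_{L^2(\tau_0,T;L^2)}$, $\|z_x\|_{L^2(\tau_0,T;L^2)}$, $\|\overline z_x\|_{L^2(\tau_0,T;L^2)}$ are all $\le C\varepsilon$; by Cauchy--Schwarz in $t$ the first part is then $\le C\varepsilon\,\|(z-\overline z)_x\|_{L^2(0,T;L^2)}\le C\varepsilon\|y_0-\overline y_0\|_{L^2}$. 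For the second, \eqref{derivative of Phi} and the mean value theorem give $|\Phi_1-\Phi_2|\le\tfrac{C}{\varepsilon}\big|\,\|y\|_{L^2}-\|\overline y\|_{L^2}\,\big|\le\tfrac{C}{\varepsilon}\|y-\overline y\|_{L^2}\le\tfrac{C}{\varepsilon}\big(\|y_0-\overline y_0\|_{L^2}+\|\Delta(t,\cdot)\|_{L^2}\big)$, using $y-\overline y=(z-\overline z)+\Delta$ and $\|(z-\overline z)(t,\cdot)\|_{L^2}\le\|y_0-\overline y_0\|_{L^2}$; moreover $\{|\Phi_1-\Phi_2|\ne 0\}\subset(\overline\tau_0,T]$, on which $\|\overline y(t,\cdot)\|_{L^2}^{1/2}\le\varepsilon^{1/2}$ and $\int_{\overline\tau_0}^T\|\overline y_x\|_{L^2}^{3/2}\,dt\le C\varepsilon^{3/2}$ by Hölder, so the second part is $\le C\varepsilon\big(\|y_0-\overline y_0\|_{L^2}+\sup_{s\in[0,T]}\|\Delta(s,\cdot)\|_{L^2}\big)$. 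Feeding this back into Lemma~\ref{estimate of delta}, $\|\Delta(t,\cdot)\|_{L^2}\le C\varepsilon\|y_0-\overline y_0\|_{L^2}+C\varepsilon\sup_{s\in[0,T]}\|\Delta(s,\cdot)\|_{L^2}$ for every $t\in[0,T]$. Taking the supremum over $t$ — admissible because $\sup_s\|\Delta(s,\cdot)\|_{L^2}<\infty$ by Lemma~\ref{delta bounded} — and choosing $\varepsilon_0\in(0,1]$ so small that $C\varepsilon_0<\tfrac{1}{2}$, I absorb the last term and get $\|\Delta(t,\cdot)\|_{L^2}\le 2C\varepsilon\|y_0-\overline y_0\|_{L^2}$ for all $\varepsilon\in(0,\varepsilon_0]$ and $t\in[0,T]$. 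Thus \eqref{estRliptendvers0} holds with $\tilde C(\varepsilon):=2C\varepsilon$, which tends to $0$ as $\varepsilon\to 0^+$.

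The step I expect to be the real work is the middle one: the estimate of Lemma~\ref{estimate of delta} is not symmetric in $(y,\overline y)$, and the term carrying $\|\overline z_x\|$ on the support of $\Phi_1$ is controlled only because the reduction $\overline\tau_0\le\tau_0$ places that support inside $(\overline\tau_0,T]$, where $\overline z$ is the already-damped linear evolution of data of size $\le\varepsilon$. Once the four ``$L^2(0,T;H^1_0)\le C\varepsilon$'' bounds and the bound on $\|(z-\overline z)_x\|_{L^2(0,T;L^2)}$ are established, everything else is Hölder/Cauchy--Schwarz and the Gronwall-type absorption.
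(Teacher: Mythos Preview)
Your proof is correct and follows the same overall strategy as the paper's: feed Lemma~\ref{estimate of delta} into an inequality of the form $\Delta_{\max}\le C\varepsilon\big(\|y_0-\overline y_0\|_{L^2}+\Delta_{\max}\big)$ and absorb. The execution differs in two points worth noting. First, the paper breaks into six explicit cases according to whether $\|y_0\|_{L^2},\|\overline y_0\|_{L^2}$ are $\ge\varepsilon$ or $<\varepsilon$ and whether the trajectories cross the threshold $\varepsilon$ inside $[0,T]$; your introduction of the threshold times $\tau_0,\overline\tau_0$ together with the WLOG reduction $\overline\tau_0\le\tau_0$ (legitimate since $\|\Delta\|$ is symmetric in the two data even though Lemma~\ref{estimate of delta} is not) collapses all six cases into a single argument. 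Second, to bound $\int|\Phi_1-\Phi_2|\,\|\overline y\|_{L^2}^{1/2}\|\overline y_x\|_{L^2}^{3/2}\,dt$, the paper invokes the pointwise smoothing estimate of Lemma~\ref{y_x-L^2}, giving $\|\overline y_x(t)\|_{L^2}\le C\varepsilon/\sqrt{t-\tau}$ and then integrating $(t-\tau)^{-3/4}$; you instead use the integrated bound $\|\overline y_x\|_{L^2(\overline\tau_0,T;L^2)}\le C\varepsilon$ from Remark~\ref{R1} combined with H\"older in $t$. Your route is somewhat more economical in that it makes Lemma~\ref{y_x-L^2} (and its smallness hypothesis $\varepsilon\le\eta$) unnecessary for this proposition, at the cost of having to argue the WLOG carefully --- which you correctly flag as the delicate step.
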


\begin{proof}
Let
\[
\Delta_{\max}:=\sup_{\substack{t\in\left[  0,T\right]  \\\varepsilon\in\left(
0,1\right]  }}\left\Vert \Delta(t,\cdot)\right\Vert _{L^{2}\left(  0,L\right)
}.
\]
Let us point out that, by Lemma \ref{delta bounded}, $\Delta_{\max}<+\infty$.
We claim that%
\begin{equation}
\Delta_{\max}\leq\varepsilon C\left(  \left\Vert y_{0}-\overline{y}%
_{0}\right\Vert _{L^{2}\left(  0,L\right)  }+\Delta_{\max}\right)
,\quad\forall\,\overline{y}_{0},y_{0}\in L^{2}\left(  0,L\right)  . \label{delta-max}%
\end{equation}
Then let $\varepsilon$ be small enough such that $1-\varepsilon C>0,$ we
obtain%
\[
\Delta_{\max}\leq\frac{\varepsilon C}{1-\varepsilon C}\left\Vert
y_{0}-\overline{y}_{0}\right\Vert _{L^{2}\left(  0,L\right)
},\quad\forall \,\overline{y}_{0},y_{0}\in L^{2}\left(  0,L\right)
.
\]
Consequently, we get
\[
\left\Vert \Delta\right\Vert _{L^{2}\left(  0,L\right)  }\leq\frac{\varepsilon
C}{1-\varepsilon C}\left\Vert y_{0}-\overline{y}_{0}\right\Vert _{L^{2}\left(
0,L\right)  },\quad\forall \,t\in\left[  0,T\right]  ,\quad \forall\overline{y}_{0}%
,y_{0}\in L^{2}\left(  0,L\right)  ,
\]
and the result follows. Hence, in order to prove Proposition
\ref{global Lipschitz}, we only need to show that (\ref{delta-max}) holds in
the following cases:

\begin{itemize}
\item[(i)] $\left\Vert \overline{y}_{0}\right\Vert _{L^{2}(0,L)}$ $,\left\Vert
y_{0}\right\Vert _{L^{2}(0,L)}$ $\geq\varepsilon,$ and $\left\Vert
\overline{y}\right\Vert _{L^{2}(0,L)}$ $,\left\Vert y\right\Vert _{L^{2}%
(0,L)}$ $\geq\varepsilon,$ $\forall t\in\left[  0,T\right]  ;$

\item[(ii)] $\left\Vert \overline{y}_{0}\right\Vert _{L^{2}(0,L)}$
$,\left\Vert y_{0}\right\Vert _{L^{2}(0,L)}$ $\geq\varepsilon,$ there exists
$\tau\in\left[  0,T\right]  $ such that $\left\Vert \overline{y}\left(
\tau,\cdot\right)  \right\Vert _{L^{2}(0,L)}=\varepsilon,$ and $\left\Vert
y\left(  t,\cdot\right)  \right\Vert _{L^{2}(0,L)}\geq\varepsilon,\forall
t\in\left[  0,T\right]  ;$

\item[(iii)] $\left\Vert \overline{y}_{0}\right\Vert _{L^{2}(0,L)}$
$,\left\Vert y_{0}\right\Vert _{L^{2}(0,L)}$ $\geq\varepsilon,$ there exists
$\tau,\varsigma\in\left[  0,T\right]  ,\varsigma>\tau,$ such that $\left\Vert
\overline{y}\left(  \tau,\cdot\right)  \right\Vert _{L^{2}(0,L)}=\varepsilon,$
and $\left\Vert y\left(  \varsigma,\cdot\right)  \right\Vert _{L^{2}%
(0,L)}=\varepsilon;$

\item[(iv)] $\left\Vert \overline{y}_{0}\right\Vert _{L^{2}(0,L)}%
\leq\varepsilon$ and $\left\Vert y\right\Vert _{L^{2}(0,L)}$ $\geq
\varepsilon,$ $\forall t\in\left[  0,T\right]  ;$

\item[(v)] $\left\Vert \overline{y}_{0}\right\Vert _{L^{2}(0,L)}%
\leq\varepsilon$ $,\left\Vert y_{0}\right\Vert _{L^{2}(0,L)}$ $\geq
\varepsilon,$ and there exists $\tau\in\left[  0,T\right]  $ such that
$\left\Vert y\left(  \tau,\cdot\right)  \right\Vert _{L^{2}(0,L)}%
=\varepsilon;$

\item[(vi)] $\left\Vert \overline{y}_{0}\right\Vert _{L^{2}(0,L)}%
\leq\varepsilon$ $,\left\Vert y_{0}\right\Vert _{L^{2}(0,L)}$ $\leq
\varepsilon.$

By Lemma \ref{estimate of delta}, for every $ t\in\left[  0,T\right]  $, we have%
\begin{align}
&  \left\Vert \Delta(t,\cdot)\right\Vert _{L^{2}\left(  0,L\right)
}\nonumber\\
\leq&  \ \int_{0}^{T}\left[  \Phi_{1}\left(  \left\Vert \overline{\alpha}%
_{x}\right\Vert _{L^{2}\left(  0,L\right)  }+\left\Vert z_{x}\right\Vert
_{L^{2}\left(  0,L\right)  }+\left\Vert \overline{z}_{x}\right\Vert
_{L^{2}\left(  0,L\right)  }\right)  \left\Vert \left(  z-\overline{z}\right)
_{x}\right\Vert _{L^{2}\left(  0,L\right)  }\right. \nonumber\\
&   \left.  \qquad  +\left\vert \Phi_{1}-\Phi_{2}\right\vert \left\Vert
\overline{z}+\overline{\alpha}\right\Vert _{L^{2}(0,L)}^{\frac{1}{2}%
}\left\Vert \left(  \overline{z}+\overline{\alpha}\right)  _{x}\right\Vert
_{L^{2}\left(  0,L\right)  }^{\frac{3}{2}}\right]  dt\nonumber\\
& \times\exp\left[  C\left(  1+\left\Vert \sqrt{\Phi_{1}}\alpha
_{x}\right\Vert _{L^{2}(L^{2})}^{2}+\left\Vert \sqrt{\Phi_{1}}\overline
{\alpha}_{x}\right\Vert _{L^{2}(L^{2})}^{2}+\left\Vert \sqrt{\Phi_{1}}%
z_{x}\right\Vert _{L^{2}(L^{2})}^{2}\right)  \right] . \label{*}%
\end{align}
 Furthermore, by using H\"{o}lder's inequality and Lemma \ref{LE1}, we have%
\begin{align}
&  \int_{0}^{T}\Phi_{1}\left(  \left\Vert \overline{\alpha}_{x}\right\Vert
_{L^{2}\left(  0,L\right)  }+\left\Vert z_{x}\right\Vert _{L^{2}\left(
0,L\right)  }+\left\Vert \overline{z}_{x}\right\Vert _{L^{2}\left(
0,L\right)  }\right)  \left\Vert \left(  z-\overline{z}\right)  _{x}%
\right\Vert _{L^{2}\left(  0,L\right)  }dt\nonumber\\
\leq&  \ \left\Vert \left(  z-\overline{z}\right)  _{x}\right\Vert _{L^{2}%
(L^{2})}\left(  \left\Vert \Phi_{1}\overline{\alpha}_{x}\right\Vert
_{L^{2}(L^{2})}+\left\Vert \Phi_{1}z_{x}\right\Vert _{L^{2}(L^{2})}+\left\Vert
\Phi_{1}\overline{z}_{x}\right\Vert _{L^{2}(L^{2})}\right) \nonumber\\
\leq&\  C\left\Vert y_{0}-\overline{y}_{0}\right\Vert _{L^{2}\left(
0,L\right)  }\left(  \left\Vert \Phi_{1}\overline{\alpha}_{x}\right\Vert
_{L^{2}(L^{2})}+\left\Vert \Phi_{1}z_{x}\right\Vert _{L^{2}(L^{2})}+\left\Vert
\Phi_{1}\overline{z}_{x}\right\Vert _{L^{2}(L^{2})}\right)  . \label{**}%
\end{align}
Applying the mean value theorem, noticing that $\left\Vert \alpha
-\overline{\alpha}\right\Vert _{L^{2}\left(  0,L\right)  }\leq\Delta_{\max
},\forall t\in\left[  0,T\right]  ,$  and by Lemma~\ref{LE3},
\[
\left\Vert z-\overline{z}\right\Vert _{L^{2}\left(  0,L\right)  }%
\leq\left\Vert y_{0}-\overline{y}_{0}\right\Vert _{L^{2}\left(  0,L\right)
},\quad \forall t\in\left[  0,T\right]  ,
\]
we get%
\begin{align}
\left\vert \Phi_{1}-\Phi_{2}\right\vert  &  \leq\left\vert \Phi_{\varepsilon
}^{\prime}\left(  \theta\right)  \right\vert \left\vert \left\Vert
z+\alpha\right\Vert _{L^{2}\left(  0,L\right)  }-\left\Vert \overline
{z}+\overline{\alpha}\right\Vert _{L^{2}\left(  0,L\right)  }\right\vert
\nonumber\\
&  \leq\left\vert \Phi_{\varepsilon}^{\prime}\left(  \theta\right)
\right\vert \left\Vert \left(  z+\alpha\right)  -\left(  \overline
{z}+\overline{\alpha}\right)  \right\Vert _{L^{2}\left(  0,L\right)
}\nonumber\\
&  \leq\left\vert \Phi_{\varepsilon}^{\prime}\left(  \theta\right)
\right\vert \left(  \left\Vert z-\overline{z}\right\Vert _{L^{2}\left(
0,L\right)  }+\left\Vert \alpha-\overline{\alpha}\right\Vert _{L^{2}\left(
0,L\right)  }\right) \nonumber\\
&  \leq\left\vert \Phi_{\varepsilon}^{\prime}\left(  \theta\right)
\right\vert \left(  \left\Vert y_{0}-\overline{y}_{0}\right\Vert
_{L^{2}\left(  0,L\right)  }+\Delta_{\max}\right)  , \label{***}%
\end{align}
where%
\[
\theta=\theta(t)\in\left(  \min\{\left\Vert z+\alpha\right\Vert _{L^{2}\left(
0,L\right)  },\left\Vert \overline{z}+\overline{\alpha}\right\Vert
_{L^{2}\left(  0,L\right)  }\},\max\{\left\Vert z+\alpha\right\Vert
_{L^{2}\left(  0,L\right)  },\left\Vert \overline{z}+\overline{\alpha
}\right\Vert _{L^{2}\left(  0,L\right)}  \}\right)  .
\]
Thus, combining (\ref{*}), (\ref{**}) and (\ref{***}), we arrive at
\begin{align*}
&  \left\Vert \Delta(t,\cdot)\right\Vert _{L^{2}\left(  0,L\right)  }\\
\leq& \ \left(  C\left\Vert y_{0}-\overline{y}_{0}\right\Vert _{L^{2}\left(
0,L\right)  }\left(  \left\Vert \Phi_{1}\overline{\alpha}_{x}\right\Vert
_{L^{2}(L^{2})}+\left\Vert \Phi_{1}z_{x}\right\Vert _{L^{2}(L^{2})}^{{}%
}+\left\Vert \Phi_{1}\overline{z}_{x}\right\Vert _{L^{2}(L^{2})}^{{}}\right)
\right. \\
&  \left.  +\left(  \left\Vert y_{0}-\overline{y}_{0}\right\Vert
_{L^{2}\left(  0,L\right)  }+\Delta_{\max}\right)  \int_{0}^{T}\left\vert
\Phi_{\varepsilon}^{\prime}\left(  \theta\right)  \right\vert \left\Vert
\overline{z}+\overline{\alpha}\right\Vert _{L^{2}(0,L)}^{\frac{1}{2}%
}\left\Vert \left(  \overline{z}+\overline{\alpha}\right)  _{x}\right\Vert
_{L^{2}\left(  0,L\right)  }^{\frac{3}{2}}dt\right) \\
&  \times\exp\left(  C\left(  1+\left\Vert \sqrt{\Phi_{1}}%
\alpha_{x}\right\Vert _{L^{2}(L^{2})}^{2}+\left\Vert \sqrt{\Phi_{1}}%
\overline{\alpha}_{x}\right\Vert _{L^{2}(L^{2})}^{2}+\left\Vert \sqrt{\Phi
_{1}}z_{x}\right\Vert _{L^{2}(L^{2})}^{2}\right)  \right)  .
\end{align*}
Consequently, we obtain%
\begin{equation}%
\begin{array}
[c]{l}%
\Delta_{\max}\leq\left[  C\left\Vert y_{0}-\overline{y}_{0}\right\Vert
_{L^{2}\left(  0,L\right)  }\left(  \left\Vert \Phi_{1}\overline{\alpha}%
_{x}\right\Vert _{L^{2}(L^{2})}+\left\Vert \Phi_{1}z_{x}\right\Vert
_{L^{2}(L^{2})}+\left\Vert \Phi_{1}\overline{z}_{x}\right\Vert _{L^{2}(L^{2}%
)}\right)  \right. \\
\text{ }\ \ \ \ \ \text{\ }\quad\left.  +\left(  \left\Vert y_{0}-\overline{y}%
_{0}\right\Vert _{L^{2}\left(  0,L\right)  }+\Delta_{\max}\right)  \displaystyle\int
_{0}^{T}\left\vert \Phi_{\varepsilon}^{\prime}\left(  \theta\right)
\right\vert \left\Vert \overline{y}\right\Vert _{L^{2}(0,L)}^{\frac{1}{2}%
}\left\Vert \overline{y}_{x}\right\Vert _{L^{2}\left(  0,L\right)  }^{\frac
{3}{2}}dt\right] \\
\text{ \ \ \ \ \ \ }\quad \times\exp\left(  C\left(  1+\left\Vert \sqrt{\Phi_{1}}%
\alpha_{x}\right\Vert _{L^{2}(L^{2})}^{2}+\left\Vert \sqrt{\Phi_{1}}%
\overline{\alpha}_{x}\right\Vert _{L^{2}(L^{2})}^{2}+\left\Vert \sqrt{\Phi
_{1}}z_{x}\right\Vert _{L^{2}(L^{2})}^{2}\right)  \right)  .
\end{array}
\label{main estimate}%
\end{equation}
 For case (i),  by (\ref{Phi1=Phi2=0}), we have $\Phi_{1}=\Phi
_{2}=0, \forall t\in[0,T]$, it follows directly from (\ref{*}) that
\[
\Delta_{\max}=0.
\]

 For case (ii),  by (\ref{Phi1=Phi2=0}), we have
\begin{equation}
\Phi_{1}\equiv0,\quad \forall t\in\left[  0,T\right]  . \label{phi1=0}%
\end{equation}
In view of Lemma \ref{decreasingL2norm}, we have
\begin{equation}
\left\Vert \overline{y}\left(  t,\cdot\right)  \right\Vert _{L^{2}(0,L)}%
\geq\varepsilon,\quad \forall t\in\left[  0,\tau\right]  , \label{lowerbary}%
\end{equation}
and%
\begin{equation}
\left\Vert \overline{y}\left(  t,\cdot\right)  \right\Vert _{L^{2}(0,L)}%
\leq\left\Vert \overline{y}\left(  \tau,\cdot\right)  \right\Vert
_{L^{2}(0,L)}=\varepsilon,\quad \forall t\in\left[  \tau,T\right]  . \label{ii-2}%
\end{equation}
Consequently, it follows from (\ref{Phi1=Phi2=0}) and
(\ref{lowerbary}) that
\[
\Phi_{2}\equiv0,\quad \forall t\in\left[  0,\tau\right]  .
\]
 From \eqref{derivative of Phi}, (\ref{main estimate}), \eqref{phi1=0},
\eqref{lowerbary} and \eqref{ii-2}, we get that
\begin{equation}
\Delta_{\max}\leq \exp\left(  C\right)  \left(  \left\Vert
y_{0}-\overline{y}_{0}\right\Vert _{L^{2}\left(  0,L\right)
}+\Delta_{\max}\right)  \int_{\tau}^{T}\frac
{C}{\varepsilon}\varepsilon^{\frac{1}{2}}\left\Vert \overline{y}%
_{x}\right\Vert _{L^{2}\left(  0,L\right)  }^{\frac{3}{2}}dt  . \label{ii-1}%
\end{equation}

 From now on, we assume that $\varepsilon\in(0,\eta]$, where
$\eta>0$ be chosen as in Lemma \ref{y_x-L^2}. Thanks to Lemma \ref{y_x-L^2} and
(\ref{ii-2}), we have
\begin{equation}
\left\Vert \overline{y}_{x}\left(  t,\cdot\right)  \right\Vert _{L^{2}%
(0,L)}\leq\frac{C}{\sqrt{t-\tau}}\left\Vert \overline{y}\left(  \tau
,\cdot\right)  \right\Vert _{L^{2}\left(  0,L\right)  }=\frac{C}{\sqrt{t-\tau
}}\varepsilon,\quad \forall t\in\left[  \tau,T\right]  . \label{ii-3}%
\end{equation}

Replacing (\ref{ii-3}) into (\ref{ii-1}), we obtain%
\begin{align*}
\Delta_{\max}  &  \leq \varepsilon C\left(  \left\Vert y_{0}-\overline{y}_{0}\right\Vert
_{L^{2}\left(  0,L\right)  }+\Delta_{\max}\right)  \int_{\tau}%
^{T}\frac{1}{\left(  t-\tau\right)  ^{\frac{3}{4}}}dt\\
&  \leq\varepsilon C\left(  \left\Vert y_{0}-\overline{y}_{0}\right\Vert
_{L^{2}\left(  0,L\right)  }+\Delta_{\max}\right)  .
\end{align*}
 For case (iii), by (\ref{Phi1=Phi2=0}) and Lemma
\ref{decreasingL2norm}, we have
\begin{equation}
\Phi_{1}=0,\ \forall t\in\left[  0,\varsigma\right]  , \label{iii-Phi1=0}%
\end{equation}%
\begin{equation}
\Phi_{2}=0,\ \forall t\in\left[  0,\tau\right]  , \label{iii-Phi2=0}%
\end{equation}
and (\ref{ii-2}) still holds. In particular,
\begin{equation}
\left\Vert \overline{y}\left(  \varsigma,\cdot\right)  \right\Vert
_{L^{2}\left(  0,L\right)  }\leq\varepsilon. \label{iii-ybar-epsilon}%
\end{equation}
It follows from Lemma \ref{LE1},  (\ref{iii-Phi1=0}) and
(\ref{iii-ybar-epsilon}) that%
\begin{align}
\left\Vert \Phi_{1}\overline{z}_{x}\right\Vert _{L^{2}(0,T;L^{2}\left(
0,L\right)  )}^{{}}  &  =\left\Vert \Phi_{1}\overline{z}_{x}\right\Vert
_{L^{2}(\varsigma,T;L^{2}\left(  0,L\right)  )}^{{}}\nonumber\\
&  \leq\left\Vert \overline{z}_{x}\right\Vert
_{L^{2}(\varsigma,T;L^{2}\left( 0,L\right)  )}^{{}}\leq C\left\Vert
\overline{y}\left(  \varsigma ,\cdot\right)  \right\Vert
_{L^{2}\left(  0,L\right)  }\leq \varepsilon C,
\label{iii-Phi1-zbarx}\\
\left\Vert \Phi_{1}z_{x}\right\Vert _{L^{2}(0,T;L^{2}\left(  0,L\right)
)}^{{}}  &  =\left\Vert \Phi_{1}z_{x}\right\Vert _{L^{2}(\varsigma
,T;L^{2}\left(  0,L\right)  )}^{{}}\nonumber\\
&  \leq\left\Vert z_{x}\right\Vert _{L^{2}(\varsigma,T;L^{2}\left(
0,L\right)  )}^{{}}\leq C\left\Vert y\left(  \varsigma,\cdot\right)
\right\Vert _{L^{2}\left(  0,L\right)  }=\varepsilon C, \label{iii-Phi1-zx}%
\end{align}
and%
\begin{equation}
\left\Vert \sqrt{\Phi_{1}}z_{x}\right\Vert _{L^{2}(0,T;L^{2}\left(
0,L\right)  )}^{{}}\leq \varepsilon C. \label{iii-sqrt-Phi1-zx}%
\end{equation}
By Remark \ref{R1}, (\ref{iii-Phi1=0}), (\ref{iii-ybar-epsilon})
and (\ref{iii-Phi1-zbarx}), we have
\begin{align}
\left\Vert \Phi_{1}\overline{\alpha}_{x}\right\Vert _{L^{2}(0,T;L^{2}\left(
0,L\right)  )}^{{}}  &  \leq\left\Vert \Phi_{1}\overline{y}_{x}\right\Vert
_{L^{2}(0,T;L^{2}\left(  0,L\right)  )}^{{}}+\left\Vert \Phi_{1}\overline
{z}_{x}\right\Vert _{L^{2}(0,T;L^{2}\left(  0,L\right)  )}^{{}}\nonumber\\
&  \leq\left\Vert \overline{y}_{x}\right\Vert _{L^{2}(\varsigma,T;L^{2}\left(
0,L\right)  )}^{{}}+\left\Vert \Phi_{1}\overline{z}_{x}\right\Vert
_{L^{2}(0,T;L^{2}\left(  0,L\right)  }^{{}}\nonumber\\
&  \leq \varepsilon C, \label{bounded1}%
\end{align}
and
\begin{equation}
\left\Vert \sqrt{\Phi_{1}}\overline{\alpha}_{x}\right\Vert _{L^{2}%
(0,T;L^{2}\left(  0,L\right)  )}^{{}}\leq \varepsilon C.
\label{iii-sqrt-Phi1-alphax}%
\end{equation}
Similarly, we obtain
\begin{equation}
\left\Vert \sqrt{\Phi_{1}}\alpha_{x}\right\Vert _{L^{2}(0,T;L^{2}\left(
0,L\right)  )}^{{}}\leq \varepsilon C. \label{bounded2}%
\end{equation}
 Moreover, for this case,  (\ref{ii-3}) still holds. Now
it follows from (\ref{derivative of Phi}), (\ref{main estimate}),
(\ref{ii-2}), (\ref{ii-3}), (\ref{iii-Phi1=0}), (\ref{iii-Phi2=0}),
(\ref{iii-Phi1-zbarx})
to (\ref{bounded2}) that%
\begin{align*}
\Delta_{\max}  &  \leq C\left(  C\varepsilon\left\Vert y_{0}-\overline{y}%
_{0}\right\Vert _{L^{2}\left(  0,L\right)  }+\varepsilon\left(  \left\Vert y_{0}%
-\overline{y}_{0}\right\Vert _{L^{2}\left(  0,L\right)  }+\Delta_{\max
}\right)\int_{\tau}^{T}\frac{1}{\left(  t-\tau\right)  ^{\frac
{3}{4}}}dt\right) \\
&  \leq\varepsilon C\left(  \left\Vert y_{0}-\overline{y}_{0}\right\Vert
_{L^{2}\left(  0,L\right)  }+\Delta_{\max}\right)  .
\end{align*}
 For case (iv), by (\ref{Phi1=Phi2=0}), we have $\Phi_{1}%
\equiv0,\forall t\in\left[  0,T\right]  .$ It follows from
(\ref{main estimate}) that
\begin{equation}
\Delta_{\max}\leq C \left(  \left\Vert
y_{0}-\overline{y}_{0}\right\Vert _{L^{2}\left(  0,L\right)
}+\Delta_{\max}\right)  \int_{0}^{T}\left\vert \Phi^{\prime}\left(
\theta\right)  \right\vert \left\Vert \overline
{y}\right\Vert _{L^{2}(0,L)}^{\frac{1}{2}}\left\Vert \overline{y}%
_{x}\right\Vert _{L^{2}\left(  0,L\right)  }^{\frac{3}{2}}dt  .
\label{iv-1}%
\end{equation}
By Lemma \ref{decreasingL2norm}, we have%
\begin{equation}
\left\Vert \overline{y}\left(  t,\cdot\right)  \right\Vert _{L^{2}(0,L)}%
\leq\left\Vert \overline{y}_{0}\right\Vert _{L^{2}(0,L)}\leq\varepsilon
,\quad \forall t\in\left[  0,T\right]  . \label{iv-2}%
\end{equation}
Moreover, thanks to Lemma \ref{y_x-L^2}, we have%
\begin{equation}
\left\Vert \overline{y}_{x}\left(  t,\cdot\right)  \right\Vert _{L^{2}%
(0,L)}\leq\frac{C}{\sqrt{t}}\left\Vert \overline{y}_{0}\right\Vert
_{L^{2}\left(  0,L\right)  }\leq\frac{C}{\sqrt{t}}\varepsilon,\quad\forall
t\in\left[  0,T\right]  . \label{iv-3}%
\end{equation}
Then it follows from (\ref{derivative of Phi}), (\ref{iv-1}) to
(\ref{iv-3}) that%
\begin{align*}
\Delta_{\max}  &  \leq  \varepsilon C\left(  \left\Vert y_{0}-\overline{y}%
_{0}\right\Vert _{L^{2}\left(  0,L\right)  }+\Delta_{\max}\right)
\int_{0}^{T}\frac{1}{t^{\frac{3}{4}}}dt \\
&  \leq\varepsilon C\left(  \left\Vert y_{0}-\overline{y}_{0}\right\Vert
_{L^{2}\left(  0,L\right)  }+\Delta_{\max}\right)  .
\end{align*}
 For case (v), similarly to case (iii), we have
\begin{align}
\left\Vert \Phi_{1}\overline{\alpha}_{x}\right\Vert
_{L^{2}(0,T;L^{2}\left( 0,L\right)  )}^{{}}  &  \leq \varepsilon
C,\quad \left\Vert \Phi_{1}z_{x}\right\Vert
_{L^{2}(0,T;L^{2}\left(  0,L\right)  )}^{{}}\leq \varepsilon C,\label{v1}\\
\left\Vert \Phi_{1}\overline{z}_{x}\right\Vert
_{L^{2}(0,T;L^{2}\left( 0,L\right)  )}^{{}}  &  \leq \varepsilon
C,\quad \left\Vert \sqrt{\Phi_{1}}\alpha _{x}\right\Vert
_{L^{2}(0,T;L^{2}\left(  0,L\right)  )}^{{}}\leq \varepsilon C,\\
\left\Vert \sqrt{\Phi_{1}}\overline{\alpha}_{x}\right\Vert _{L^{2}%
(0,T;L^{2}\left(  0,L\right)  )}^{{}}  &  \leq \varepsilon
C,\quad \left\Vert
\sqrt{\Phi_{1}}z_{x}\right\Vert _{L^{2}(0,T;L^{2}\left(  0,L\right)  )}^{{}%
}\leq \varepsilon C, \label{v2}%
\end{align}
Moreover, (\ref{iv-2}) and (\ref{iv-3}) still hold. Thanks to
(\ref{derivative of Phi}), (\ref{iv-2}) and (\ref{iv-3}), we have%
\begin{equation}
\int_{0}^{T}\left\vert \Phi^{\prime}\left(  \theta\right)  \right\vert
\left\Vert \overline{y}\right\Vert _{L^{2}(0,L)}^{\frac{1}{2}}\left\Vert
\overline{y}_{x}\right\Vert _{L^{2}\left(  0,L\right)  }^{\frac{3}{2}}dt\leq
\varepsilon C\int_{0}^{T}\frac{1}{t^{\frac{3}{4}}}dt. \label{v3}%
\end{equation}
Then, by (\ref{main estimate}), (\ref{v1}) to (\ref{v3}), we obtain%
\[
\Delta_{\max}\leq \varepsilon C\left(  \left\Vert y_{0}-\overline{y}%
_{0}\right\Vert _{L^{2}\left(  0,L\right)  }+\Delta_{\max}\right)  .
\]
 For the last case (vi), (\ref{iv-2})-(\ref{v3}) hold, and
(\ref{delta-max}) follows. Above all, we have proved (\ref{delta-max}) for
all the cases (i)-(vi), which completes the proof of
Proposition~\ref{global Lipschitz}.
\end{itemize}
\end{proof}

\subsubsection{\bigskip Smoothness of the semigroup}

\begin{lemma}
\label{lemma0} Let $\varepsilon>0$ and $T>0$ be given. Then the nonlinear map
$\mathcal{S}(t)$ defined by the unique solution of (\ref{new system}) is of class
$C^{3}$ from $L^{2}\left(  0,L\right)  $ to $C\left(  \left[  0,T\right]
;L^{2}\left(  0,L\right)  \right)  $. Moreover, its derivative ${\mathcal{S}}^{(1)}$ at
$y_{0}\in L^{2}\left(  0,L\right)  $ is given by
\begin{equation}
{\mathcal{S}}^{(1)}(y_{0})(h):=\mathcal{K}^{(1)}(y)(h),\, \forall
h\in L^{2}(0,L),
\end{equation}
where $\mathcal{K}^{(1)}(y)(h)$ is defined by the following system \eqref{e1}
with $y=\mathcal{S}(y_{0})$.
\end{lemma}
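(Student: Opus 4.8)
The plan is to realize $t\mapsto\mathcal S(t)y_0$ as the fixed point of the map $\phi$ of \eqref{integral form}, viewed now as a function $\Psi$ of the pair $(y_0,y)$, and then to invoke smooth dependence of fixed points of uniform contractions on parameters. I would set $Y_T:=C([0,T];L^2(0,L))\cap L^2(0,T;H_0^1(0,L))$ and
\[
\Psi(y_0,y):=S(\cdot)y_0+\mathcal L\bigl(N(y)\bigr),\qquad
(\mathcal L f)(t):=\int_0^{t}S(t-s)f(s)\,ds,\qquad
N(y):=\Phi_\varepsilon\bigl(\|y\|_{L^2(0,L)}\bigr)\,yy_x .
\]
By Proposition~4.1 of \cite{Rosier1997}, $\mathcal L$ is a bounded linear operator from $L^1(0,T;L^2(0,L))$ to $Y_T$, and by Lemmas~\ref{LE3} and \ref{LE1}, $y_0\mapsto S(\cdot)y_0$ is bounded linear from $L^2(0,L)$ to $Y_T$; both are therefore $C^\infty$, so the regularity of $\Psi$ reduces to that of the nonlinear map $N:Y_T\to L^1(0,T;L^2(0,L))$.

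The core step is the smoothness of $N$, which I would prove by factoring $N(y)(t)=g\bigl(y(t)\bigr)\,B(y)(t)$, where $g(v):=\Phi_\varepsilon(\|v\|_{L^2(0,L)})$ and $B(y):=yy_x=\tfrac12(y^2)_x$. The symmetric bilinear map $(y,z)\mapsto\tfrac12(y_xz+yz_x)$ is bounded from $Y_T\times Y_T$ into $L^1(0,T;L^2(0,L))$ --- this is exactly the estimate already carried out in the proof of Proposition~\ref{proplocalex} (H\"older's inequality, the Sobolev embedding $H_0^1(0,L)\subset C^0([0,L])$ and the Gagliardo--Nirenberg inequality \eqref{15.1*}) --- so $y\mapsto B(y)$ is a bounded quadratic map, hence $C^\infty$, with $B^{(1)}(y)h=(yh)_x$, $B^{(2)}(y)(h,k)=h_xk+hk_x$ and $B^{(3)}\equiv0$. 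Next, $g:L^2(0,L)\to\mathbb R$ is $C^\infty$ with globally bounded, uniformly continuous derivatives of every order: it equals $1$ on $\{\|v\|_{L^2(0,L)}<\varepsilon/2\}$, equals $0$ on $\{\|v\|_{L^2(0,L)}>\varepsilon\}$, and on the bounded annulus between them $v\mapsto\|v\|_{L^2(0,L)}$ is smooth (being bounded away from $0$) while the derivatives of $\Phi_\varepsilon$ are bounded as in \eqref{derivative of Phi}. Therefore the superposition operator $y\mapsto\bigl(t\mapsto g(y(t))\bigr)$ is $C^\infty$ from $C([0,T];L^2(0,L))$ to $C([0,T];\mathbb R)$, and composing it with the bounded bilinear pointwise multiplication $C([0,T];\mathbb R)\times L^1(0,T;L^2(0,L))\to L^1(0,T;L^2(0,L))$ gives $N\in C^\infty$, whence $\Psi\in C^\infty\bigl(L^2(0,L)\times Y_T;Y_T\bigr)$.

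Then I would fix $\eta>0$ and take $R=2\eta\bar C$ and $T>0$ as in the proof of Proposition~\ref{proplocalex}, where $\Psi(y_0,\cdot)$ was shown to map the closed ball $\overline{B}_R\subset Y_T$ into itself and to be a contraction on it with a Lipschitz constant strictly below $1$, independent of $y_0$ with $\|y_0\|_{L^2(0,L)}\le\eta$ (see \eqref{phi(y)-phi(z)}, \eqref{T-R} and \eqref{phinormest}). Since $\Psi$ is $C^\infty$, the uniform contraction principle with parameters makes $y_0\mapsto\mathcal S(\cdot)y_0$ of class $C^3$ (in fact $C^\infty$) from $\{\|y_0\|_{L^2(0,L)}<\eta\}$ into $Y_T$, hence into $C([0,T];L^2(0,L))$; as $\eta$ is arbitrary this holds on all of $L^2(0,L)$ for this (small) $T$. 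A finite induction over subintervals of a general $[0,T]$ --- using $\mathcal S(t)=\mathcal S(t-t_0)\circ\mathcal S(t_0)$, the bound $\|\mathcal S(t_0)y_0\|_{L^2(0,L)}\le\|y_0\|_{L^2(0,L)}$ from Lemma~\ref{decreasingL2norm} (so the intermediate data stay in a fixed ball), and the boundedness/linearity of the evaluation $u\mapsto u(t_0)$ and of the concatenation of pieces --- upgrades this to an arbitrary $T>0$. Finally, differentiating $\mathcal S(\cdot)y_0=\Psi\bigl(y_0,\mathcal S(\cdot)y_0\bigr)$ in a direction $h\in L^2(0,L)$, with $y=\mathcal S(y_0)$ and $w=\mathcal S^{(1)}(y_0)(h)$, gives
\[
w=S(\cdot)h+\mathcal L\!\left(\Phi_\varepsilon\bigl(\|y\|_{L^2(0,L)}\bigr)(yw)_x+\Phi_\varepsilon'\bigl(\|y\|_{L^2(0,L)}\bigr)\frac{\langle y,w\rangle_{L^2(0,L)}}{\|y\|_{L^2(0,L)}}\,yy_x\right),
\]
which is the mild form of the linearized system \eqref{e1} (the last term being unambiguous since $\Phi_\varepsilon'$ vanishes near $0$); this is the asserted identity $\mathcal S^{(1)}(y_0)(h)=\mathcal K^{(1)}(y)(h)$.

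The hard part will be the derivative loss in the quadratic term $yy_x$: it is why the fixed-point argument must be run in the space-time space $Y_T$ and must rely on Rosier's maximal-regularity estimate (Proposition~4.1 of \cite{Rosier1997}) to close the loop, and one has to reconcile this with the non-differentiability of $v\mapsto\|v\|_{L^2(0,L)}$ at the origin, which is only harmless because $\Phi_\varepsilon$ is locally constant there. Once $N$ is shown to be genuinely $C^3$ between these spaces, the smooth dependence of the fixed point on the parameter and the patching to arbitrary $T$ are routine.
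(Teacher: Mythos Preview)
Your proposal is correct and substantially more detailed than what the paper itself offers: the paper's ``proof'' of this lemma is a single sentence referring the reader to \cite{Zhang1995} and \cite[Theorem~5.4]{Bona-Sun-Zhang} ``for a detailed argument in related circumstances.'' There is therefore nothing to compare at the level of strategy --- you have supplied an argument where the paper only cites.

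That said, your outline is exactly the standard route those references take (fixed-point formulation in the space-time space $Y_T$, smoothness of the nonlinearity via its bilinear/quadratic structure combined with Rosier's maximal-regularity estimate, uniform contraction with parameters, then propagation to arbitrary $T$ by the semigroup property and Lemma~\ref{decreasingL2norm}). The two points you flag as delicate are the right ones: the loss of a derivative in $yy_x$ forces the argument into $Y_T$ rather than $C([0,T];L^2(0,L))$ alone, and the non-smoothness of $v\mapsto\|v\|_{L^2(0,L)}$ at the origin is indeed neutralized by the fact that $\Phi_\varepsilon$ is locally constant there, so $g(v)=\Phi_\varepsilon(\|v\|_{L^2(0,L)})$ is genuinely $C^\infty$ on all of $L^2(0,L)$. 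One small remark: when invoking the uniform contraction principle (or the implicit function theorem on $y-\Psi(y_0,y)=0$), you want the fixed point strictly inside $\overline{B}_R$; this is automatic if in the proof of Proposition~\ref{proplocalex} one takes the strict inequality $(1+\sqrt{T})(1+4\eta\bar C^2 T^{1/4})<2$, which the paper's choice of $T$ already allows.
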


\begin{equation}
\label{e1}%
\begin{cases}
\Delta_{t}+\Delta_{x}+\Delta_{xxx}+\Phi^{\prime}_{\varepsilon}(\|y\|_{L^{2}%
(0,L)})\displaystyle\frac{\int_{0}^{L} y\Delta\, dx}{\|y\|_{L^{2}(0,L)}}yy_{x}
+\Phi_{\varepsilon}(\|y\|_{L^{2}(0,L)})(y\Delta_{x}+\Delta y_{x})=0,\\
\Delta(t,0)=\Delta(t,L)=0,\\
\Delta_{x}(t,L)=0,\\
\Delta(0,x)=h(x),
\end{cases}
\end{equation}

\begin{proof}
We refer to \cite{Zhang1995} and \cite[Theorem 5.4]{Bona-Sun-Zhang} for a
detailed argument in related circumstances.
\end{proof}

\subsubsection{\bigskip Center manifold}

Combining \cite[Remark 2.3]{Minh-Wu2004}, Corollary
\ref{Corollary on spectrum} and Proposition \ref{global Lipschitz}, we are in
a position to apply \cite[Theorem 2.19]{Minh-Wu2004} and \cite[Theorem
2.28]{Minh-Wu2004}. This gives, if $\varepsilon>0$ is small enough which will
be always assumed from now on, the existence of an invariant center manifold
for (\ref{new system}) which is of class $C^{3}$. (In fact this center
manifold is called a center-unstable manifold in \cite[Theorem 2.19]%
{Minh-Wu2004}; however, in our situation, with the notations of
\cite{Minh-Wu2004}, $P_{2}(t)$, $t\in\mathbb{R}$, are trivial
projections and then the name of center manifold can be adopted: see
\cite[Remark 2.20]{Minh-Wu2004}.) More precisely, there exists a map
$g:M\rightarrow M^{\bot}$ of class $C^{3}$ satisfying $g(0)=0$ and
$g^{\prime}(0)=0$, such that, if
\[
G:=\left\{  x_{1}+g\left(  x_{1}\right)  :x_{1}\in M\right\}  ,
\]
then, for every $y_{0}\in G$ and for every $t\in[0,+\infty)$, $\mathcal{S}%
(t)y_{0}\in G$. Moreover, Theorem \ref{main result} holds, if (and only if),
\begin{gather}
\label{dyncenter}\mathcal{S}(t)y_{0}\rightarrow0 \text{ as } t \rightarrow
+\infty, \, \forall y_{0}\in G \text{ such that } \|y_{0}\|_{L^{2}(0,L)}
\text{ is small enough.}%
\end{gather}
(For this last statement, see (2.42) in \cite{Minh-Wu2004}.) We
prove \eqref{dyncenter} in the next section.

\section{Dynamic on the center manifold}

\label{sec4} In this section, we prove \eqref{dyncenter}, which concludes the
proof of Theorem \ref{main result}.

\begin{proof}
Let $y_{0}\in G$. Let, for $t\in[0,+\infty)$, $y(t)(x):=y(t,x):=(\mathcal{S}%
(t)y_{0})(x)$. We write
\begin{equation}
y(t,x)=p(t)\varphi(x)+y^{\star}(t,x)\text{,} \label{decomposition-y}%
\end{equation}
where $\phi(x)$ is defined in (\ref{phi}) and $y^{\star}(t,x)\in
M^{\bot}.$ By \eqref{phi} and \eqref{projection-on-M}, we have, at
least if $\|y(t)\|_{L^{2}(0,L)}$ is small enough which will be
always assumed in this proof,
\begin{align}
\frac{dp(t)}{dt}  &  =\int_{0}^{L}y_{t}\left(  t,x\right)  \varphi
(x)dx=\int_{0}^{L}\left(  -y_{x}-yy_{x}-y_{xxx}\right)  \varphi
(x)dx\nonumber\\
&  =\int_{0}^{L}y(t,x)\varphi(x)dx-\int_{0}^{L}y(t,x)y_{x}(t,x)\varphi
(x)dx+\int_{0}^{L}y(t,x)\varphi_{xxx}(x)dx\nonumber\\
&  =\frac{1}{2}\int_{0}^{L}y^{2}(t,x)\varphi_{x}(x)dx. \label{1}%
\end{align}
We can also obtain the system for $y^{\star}(t,x)$ as the following

\begin{equation}
\left\{
\begin{array}
[c]{l}%
y_{t}^{\star}+y_{x}^{\star}+(I-P)yy_{x}+y_{xxx}^{\star}=0,\\
y^{\star}(t,0)=y^{\star}(t,L)=0,\\
y_{x}^{\star}(t,L)=0.
\end{array}
\right.  \label{y star}%
\end{equation}
 It follows from \eqref{decomposition-y} that
\begin{align*}
yy_{x}  &  =\left(  p(t)\varphi(x)+y^{\star}(t,x)\right)  \left(
p(t)\varphi_{x}(x)+y_{x}^{\star}(t,x)\right) \\
&  =p^{2}(t)\varphi(x)\varphi_{x}(x)+p(t)y^{\star}(t,x)\varphi_{x}%
(x)+p(t)\varphi(x)y_{x}^{\star}(t,x)+y^{\star}(t,x)y_{x}^{\star}(t,x).
\end{align*}
Consequently, we have
\begin{align}
(I-P)yy_{x} =
&\ p^{2}(t)\varphi(x)\varphi_{x}(x)+p(t)y^{\star}(t,x)\varphi
_{x}(x)+p(t)\varphi(x)y_{x}^{\star}(t,x)+y^{\star}(t,x)y_{x}^{\star}(t,x)\nonumber\\
&  -p^{2}(t)\varphi(x)\int_{0}^{L}\varphi^{2}(x)\varphi_{x}(x)dx-p(t)\varphi
(x)\int_{0}^{L}y^{\star}(t,x)\varphi(x)\varphi_{x}(x)dx\nonumber\\
&
-p(t)\varphi(x)\int_{0}^{L}\varphi^{2}(x)y_{x}^{\star}(t,x)dx-\varphi
(x)\int_{0}^{L}\varphi(x)y^{\star}(t,x)y_{x}^{\star}(t,x)dx.\label{(I-P)yyx}
\end{align}
 By using (\ref{phi}) and integrations by parts, we have
\begin{align}
\int_{0}^{L}\varphi^{2}(x)\varphi_{x}(x)dx  &  =0,\label{phi^2phix}\\
\int_{0}^{L}y^{\star}(t,x)\varphi(x)\varphi_{x}(x)dx  &  =-\frac{1}{2}\int
_{0}^{L}\varphi^{2}(x)y_{x}^{\star}(t,x)dx,\label{y*phiphix}\\
\int_{0}^{L}\varphi(x)y^{\star}(t,x)y_{x}^{\star}(t,x)dx& =-\frac{1}{2}\int
_{0}^{L}\varphi_{x}(x)\left(  y^{\star}(t,x)\right)  ^{2}dx.\label{phiy*y*x}
\end{align}
It can be deduced from (\ref{(I-P)yyx}), (\ref{phi^2phix}),
(\ref{y*phiphix}) and (\ref{phiy*y*x}) that
\begin{align}
(I-P)yy_{x} = &\
p^{2}(t)\varphi(x)\varphi_{x}(x)+p(t)y^{\star}(t,x)\varphi
_{x}(x)+p(t)\varphi(x)y_{x}^{\star}(t,x)+y^{\star}(t,x)y_{x}^{\star}(t,x)\nonumber\\
&  -\frac{1}{2}p(t)\varphi(x)\int_{0}^{L}\varphi^{2}(x)y_{x}^{\star
}(t,x)dx+\frac{1}{2}\varphi(x)\int_{0}^{L}\varphi_{x}(x)\left(  y^{\star
}(t,x)\right)  ^{2}dx.\label{(I-P)yyxfinal}
\end{align}
According to the existence and smoothness of the center manifold, we can set%
\begin{equation}
y^{\star}(t,x)=a(x)p^{2}(t)+O(p^{3}(t)),\, \text{ as }|p(t)|\rightarrow0.
\label{ystar}%
\end{equation}
Then, by using (\ref{y star}), (\ref{(I-P)yyxfinal}) and by
comparing the coefficients of $p^{2}(t)$,
we obtain%
\begin{equation}
\left\{
\begin{array}
[c]{l}%
a_{x}(x)+a_{xxx}(x)+\varphi(x)\varphi_{x}(x)=0,\\
a(0)=a(L)=0,\\
a_{x}(L)=0.
\end{array}
\right.  \label{a}%
\end{equation}
The solution of (\ref{a}) is
\[
a(x)=C_{1}+C_{2}\cos x-\frac{1}{3}\sin x+\frac{1}{6\pi}x\sin x+\frac{1}{36\pi
}\cos(2x),
\]
where
\begin{equation}
C_{1}+C_{2}=-\frac{1}{36\pi}. \label{C1}%
\end{equation}
Note that $y^{\star}(t,x)\in M^{\bot},$ we have
\[
\int_{0}^{L}a(x)\varphi(x)dx=0,
\]
i.e.%
\[
C_{1}\frac{2\pi}{\sqrt{3\pi}}+C_{2}\frac{-\pi}{\sqrt{3\pi}}+\frac{1}{6\pi
}\times\frac{-3\pi}{2\sqrt{3\pi}}=0,
\]
which leads to
\begin{equation}
2\pi C_{1}-\pi C_{2}-\frac{1}{4}=0. \label{C2}%
\end{equation}
Combining (\ref{C1}) and (\ref{C2}), we get
\begin{align*}
C_{1}    =\frac{2}{27\pi},\quad
C_{2}    =-\frac{11}{108\pi}.
\end{align*}
Therefore,
\begin{equation}
a(x)=\frac{2}{27\pi}-\frac{11}{108\pi}\cos x-\frac{1}{3}\sin x+\frac{1}{6\pi
}x\sin x+\frac{1}{36\pi}\cos(2x). \label{a(x)}%
\end{equation}
Combining  (\ref{decomposition-y}), (\ref{1}), (\ref{ystar}) and
(\ref{a(x)}), we obtain
\begin{align*}
\frac{dp(t)}{dt}  &  =\frac{1}{2}\int_{0}^{L}\left(  p(t)\varphi
(x)+a(x)p^{2}(t)+O(p^{3}(t))\right)  ^{2}\varphi_{x}(x)dx\\
&  =p^{3}(t)\int_{0}^{L}a(x)\varphi(x)\varphi_{x}(x)dx+O(p^{4}(t))\\
&  =\frac{p^{3}(t)}{3\pi}\left(  -\frac{1}{3}\pi+\frac{1}{6\pi}\pi^{2}\right)
+O(p^{4}(t))\\
&  =-\frac{{p^{3}(t)}}{18}+O(p^{4}(t)),\, \text{ as }\  |p(t)|\rightarrow0.
\end{align*}
This concludes the proof of \eqref{dyncenter} and the proof of Theorem
\ref{main result}.
\end{proof}

\section*{Acknowledgement}
We thank Thierry Gallay, G\'{e}rard Iooss, Lionel Rosier and Bing-Yu Zhang for
their valuable advices during the preparation of this work.

\end{document}